\documentclass[final,hidelinks]{siamart251216}

\usepackage{amssymb,mathtools}
\usepackage{mathrsfs}
\usepackage{bm}
\usepackage{bbm}
\usepackage{enumitem}
\usepackage{xcolor}

\newcommand{\R}{\mathbb{R}}

\newcommand{\E}{\mathbb{E}}

\newcommand{\K}{\mathcal{K}}

\newsiamthm{problem}{Problem formulation}
\newsiamremark{remark}{Remark}
\newsiamremark{example}{Example}

\theoremstyle{plain}
\theoremheaderfont{\scshape}
\theorembodyfont{\itshape}
\newtheorem{assumption}{Assumption}

\title{Ergodic Linear--Quadratic Optimal Control
with Stochastic Coefficients%
\thanks{\textbf{Funding:} This work is partially supported by
NSF grant DMS-2508581 and the Chair Finance and Sustainable Growth
of the Risk Foundation.}}

\author{
Xuyang Lin%
\thanks{New York University, Tandon School of Engineering
(\email{xuyang.lin@nyu.edu},
\email{nizar.touzi@nyu.edu}).}
\and
Nizar Touzi\footnotemark[2]
}

\headers{Ergodic LQ Control with Stochastic Coefficients}
{Xuyang Lin and Nizar Touzi}

\begin{document}

\maketitle

\begin{abstract}
We investigate ergodic linear--quadratic optimal control problems
with stochastic coefficients and introduce the associated stochastic
ergodic Hamilton--Jacobi--Bellman (HJB) equation, in which the
classical ergodic constant is replaced by a random process $\lambda$.
We establish well-posedness of the equation and show that $\lambda$
characterizes the optimal ergodic costs for a general class of
averaging kernels, including those corresponding to normalized
finite-horizon and discounted costs.
For random-periodic coefficients, we construct a random-periodic
solution and show that the optimal ergodic costs associated with
all admissible averaging kernels coincide.
We also establish stability of the optimal ergodic costs under
perturbations of the coefficients.
\end{abstract}

\begin{keywords}
ergodic control,
linear--quadratic stochastic control,
stochastic coefficients,
stochastic Hamilton--Jacobi--Bellman equations,
backward stochastic Riccati equations,
dual backward stochastic differential equations
\end{keywords}

\begin{MSCcodes}
93E20, 49N10, 60H10
\end{MSCcodes}

\section{Introduction}

Stochastic linear--quadratic (LQ) control problems form a classical class of
control problems, combining tractability with a wide
range of applications. The general form of their controlled dynamics can be written as
\begin{equation}\label{eq:lq-dynamics-intro}
dX_t^{x;\nu}
=
\left(A_tX_t^{x;\nu}+B_t\nu_t+f_t\right)dt
+\sum_{i=1}^d
\left(C_t^iX_t^{x;\nu}+D_t^i\nu_t+g_t^i\right)dW_t^i,\, X_0^{x;\nu}=x,
\end{equation}
where $W$ is a $d$-dimensional Brownian motion,
$\nu$ is a control, and
$A,B,C,D,f,g$ are given coefficients. For finite-horizon stochastic LQ problems,
the aim is to minimize the cost
$\mathbb E\int_0^T \ell_t(X_t^{x;\nu},\nu_t)\,dt,
$
where the running cost $\ell_t$ is quadratic in both
the state and the control, and the terminal cost is
taken to be zero for simplicity.

When the coefficients in \eqref{eq:lq-dynamics-intro}
and those of $\ell_t$ are stochastic,
Bismut \cite{Bismut1976} studied
a backward stochastic Riccati equation and an associated
dual BSDE:
\begin{equation}\label{eq:riccati-dual-intro}
dP_t=-\mathcal R_t(P_t,Q_t)\,dt+\sum_{i=1}^d Q_t^i\,dW_t^i,
\quad
dr_t=-\mathcal G_t(P_t,Q_t,r_t,k_t)\,dt+\sum_{i=1}^d k_t^i\,dW_t^i,
\end{equation}
on $[0,T]$ with suitable terminal conditions.
The minimization problem of the cost
$\mathbb E\int_0^T \ell_t(X_t^{x;\nu},\nu_t)\,dt
$ can be characterized
in terms of the solutions to \eqref{eq:riccati-dual-intro}
and the coefficients. Both equations fall outside the standard Lipschitz BSDE
framework, making them difficult to handle.
The general well-posedness problem remained open for many years
and was eventually resolved by Tang \cite{Tang2003} for the
Riccati equation and by Kohlmann and Tang \cite{KohlmannTang2003}
for the dual equation.

Beyond the finite-horizon setting, one may minimize the
infinite-horizon cost
$\mathbb E\int_0^\infty
\ell_t(X_t^{x;\nu},\nu_t)\,dt$,
or the ergodic costs defined by the limit inferior and
limit superior of
$J_T^{\rm av}(x;\nu)$ and $J_\rho^{\rm dis}(x;\nu)$
as $T\to\infty$ and $\rho\downarrow0$, respectively,
(see 
Arisawa and Lions~\cite{ArisawaLions1998},
and Bensoussan and Frehse~\cite{BensoussanFrehse1992}), where
\begin{equation}\label{eq:normalized-costs-intro}
J_T^{\rm av}(x;\nu)
:=\frac1T\mathbb E\int_0^T
\ell_t(X_t^{x;\nu},\nu_t)\,dt,
\quad
J_\rho^{\rm dis}(x;\nu)
:=\rho\mathbb E\int_0^\infty e^{-\rho t}
\ell_t(X_t^{x;\nu},\nu_t)\,dt.
\end{equation}
In the LQ setting with stochastic coefficients,
infinite-horizon and ergodic control problems can be studied
through
\eqref{eq:riccati-dual-intro} on $\R_+$.
Guatteri and Masiero \cite{GuatteriMasiero2010} considered
these problems under suitable stabilizability and coercivity
assumptions, with $g=0$.
They first studied \eqref{eq:riccati-dual-intro} on $\R_+$
and, under the condition
$f\in L^\infty\cap L^2(\Omega\times\R_+)$,
used its solutions to characterize
$\inf_\nu\mathbb E\int_0^\infty
\ell_t(X_t^{x;\nu},\nu_t)\,dt$.
For bounded $f$, they applied the preceding results to the
discounted problem and used a family of discount-dependent
Riccati and dual solutions to represent
$\liminf_{\rho\downarrow0}\inf_\nu J_\rho^{\rm dis}(x;\nu)$,
rather than the classical optimal ergodic cost
$\inf_\nu\liminf_{\rho\downarrow0}J_\rho^{\rm dis}(x;\nu)$. In contrast, as a special case of our characterization in
\eqref{eq:kernel-lower-value}--\eqref{eq:kernel-upper-value},
we establish the equality of these two quantities and express
their common value in terms of the solution of the undiscounted
equations, with the discount $\rho$ appearing
only in the explicit averaging weight $\rho e^{-\rho t}$.
 Ergodic LQ problems with stationary and random-periodic
coefficients were also studied by Guatteri and Masiero
\cite{GuatteriMasiero2009Stationary} and Wu and Zhang
\cite{WuZhang2026}, respectively.

A general approach to ergodic control in the
time-homogeneous Markovian setting, beyond the LQ case,
is based on the ergodic HJB equation
\cite{ArisawaLions1998,BensoussanFrehse1992}:
\[
\lambda=\inf_u\{\ell(x,u)+\mathscr L^u v(x)\},
\]
with $\mathscr L^u$ the controlled generator and
the constant $\lambda$ characterizing the optimal ergodic cost.

In the time-inhomogeneous Markovian LQ setting,
Da Prato and Ichikawa \cite{DaPratoIchikawa1990}
and Ankirchner and Engelhardt
\cite{AnkirchnerEngelhardt2022} characterized optimal
average costs through the Ces\`aro averages of an explicit
deterministic function $\lambda(t)$:
\begin{equation*}
\inf_\nu\limsup_{T\to\infty}J_T^{\rm av}(x;\nu)
=\limsup_{T\to\infty}\frac1T\int_0^T\lambda(t)\,dt.
\end{equation*}
In the latter work, $\lambda(t)$ appears in a
time-dependent HJB equation that can be rewritten as
\[
\lambda(t)=\partial_t v(t,x)
+\inf_u\{\ell_t(x,u)+\mathscr L_t^u v(t,x)\},
\]
where $v$ is constructed from solutions of the deterministic
Riccati and dual equations corresponding to
\eqref{eq:riccati-dual-intro} on $\R_+$.

In the present paper, building on the framework of \cite{GuatteriMasiero2010}, we study the ergodic
LQ control problem with general stochastic coefficients,
allowing $g\neq0$.
We first introduce a stochastic ergodic HJB equation,
motivated by the above formulation and Peng's finite-horizon
stochastic HJB equation \cite{Peng1992}:
\begin{equation}\label{eq:stochastic-ergodic-hjb-intro}
\lambda_t
=
\inf_{u\in\mathbb R^k}
\left\{
\ell_t(x,u)+\mathcal L_t^uV_t(x)
\right\}.
\end{equation}
The solution $(V,\lambda)$ consists of a semimartingale
random field $V$ and a real-valued process $\lambda$,
where $\mathcal L_t^u$ denotes the controlled
It\^o--Wentzell operator.

We use the infinite-horizon Riccati existence result
of \cite{GuatteriMasiero2010} and extend their existence result for the dual equation to bounded $f$ and $g$.
We establish uniqueness and uniform estimates for both equations.
These solutions yield a quadratic solution $(V,\lambda)$
of the stochastic ergodic HJB equation, for which we prove
uniqueness in a suitable sense.

In particular, we introduce the key uniformly local conditional
spaces $\mathbb L^p_{\mathrm{unif}}$, for $p\geq1$, defined by
$
\|Y\|_{\mathbb L^p_{\mathrm{unif}}}
:=\sup_{t\geq0}
\big\|\mathbb E\big[\int_t^{t+1}|Y_s|^p\,ds
\mid\mathcal F_t\big]\big\|_\infty^{1/p}.
$ We establish that $Q$ and $k$ in
\eqref{eq:riccati-dual-intro} belong to
$\mathbb L^2_{\mathrm{unif}}$ and that $\lambda$ in
\eqref{eq:stochastic-ergodic-hjb-intro} belongs to
$\mathbb L^1_{\mathrm{unif}}$,
providing the estimates needed for verification.

Following the ideas of
Lin, Lise, and Touzi~\cite{Lin2026},
we generalize  \eqref{eq:normalized-costs-intro}
 as
\[
J^{K_\rho}(x;\nu)
:=\mathbb E\int_0^\infty
K_\rho(t)\ell_t(X_t^{x;\nu},\nu_t)\,dt,
\qquad K_\rho(t):=\rho K(\rho t),
\]
where $K$ is nonnegative, nonincreasing, and c\`adl\`ag,
with $\int_0^\infty K(t)\,dt=1$. The choices $K(t)=\mathbf 1_{[0,1)}(t)$ and $K(t)=e^{-t}$
recover the two criteria in \eqref{eq:normalized-costs-intro},
respectively. For such $K$, we prove that
\[
\begin{aligned}
\liminf_{\rho\downarrow0}\inf_\nu J^{K_\rho}(x;\nu)
&=\inf_\nu\liminf_{\rho\downarrow0}J^{K_\rho}(x;\nu)
=\liminf_{\rho\downarrow0}\mathbb E\int_0^\infty K_\rho(t)\lambda_t\,dt,\\
\limsup_{\rho\downarrow0}\inf_\nu J^{K_\rho}(x;\nu)
&=\inf_\nu\limsup_{\rho\downarrow0}J^{K_\rho}(x;\nu)
=\limsup_{\rho\downarrow0}\mathbb E\int_0^\infty K_\rho(t)\lambda_t\,dt.
\end{aligned}
\]

We also establish the corresponding verification principle
and construct an optimal affine feedback control. Finally, for random-periodic coefficients, we construct
a random-periodic solution and show that the optimal
ergodic costs coincide for all these kernels.
We also establish stability of the Riccati and dual
solutions and the optimal ergodic costs under
perturbations of the coefficients.

The paper is organized as follows.
After introducing the framework in
Section~\ref{sec:framework-main-results} and presenting
our main results in Section~\ref{sec:main-results},
we prove the verification theorem in
Section~\ref{sec:proof_veri}.
Section~\ref{sec:finite-horizon} establishes well-posedness
of the infinite-horizon Riccati and dual equations and
of the stochastic ergodic HJB equation.
In Section~\ref{sec:further-characterizations}, we study
random-periodic coefficients and establish stability
with respect to the coefficients.
\section{Framework and problem formulation}
\label{sec:framework-main-results}
 \subsection{Notation}

Let \(W=(W^1,\ldots,W^d)\) be a \(d\)-dimensional Brownian motion on a
complete probability space \((\Omega,\mathcal F,\mathbb P)\), and let
\(\mathbb F=(\mathcal F_t)_{t\geq0}\) be its augmented natural filtration.
We denote by \(\mathbb E_t:=\mathbb E(\,\cdot\mid\mathcal F_t)\) the
conditional expectation with respect to \(\mathcal F_t\).
Throughout the paper,
unless otherwise stated, all processes are assumed to be progressively
measurable with respect to $\mathbb F$. Let
$\mathcal S^n:=\{A\in\mathbb R^{n\times n}:A=A^\top\}$ and
$\mathcal S_+^n:=\{A\in\mathcal S^n:x^\top Ax\geq0
\text{ for all }x\in\mathbb R^n\}$. Let $E$ be a finite-dimensional Euclidean space, including
the vector, matrix, and product spaces used below, endowed with its usual
Euclidean  norm, denoted by $|\cdot|$. We introduce the
following spaces.
\begin{itemize}

\item $\mathbb L^\infty(E)$ is the space of progressively measurable processes
$Y:\Omega\times\mathbb R_+\to E$ such that
$\|Y\|_{\mathbb L^\infty(E)}:=\operatorname*{ess\,sup}_{(\omega,t)}
|Y_t(\omega)|<\infty$,
where the essential supremum is taken with respect to
$d\mathbb P\otimes dt$.

\item $\mathbb S^\infty(E)$ is the subspace of
$\mathbb L^\infty(E)$ consisting of processes with
$\mathbb P$-a.s.\ continuous paths. For $T>0$,
$\mathbb S_T^\infty(E)$ is defined similarly, with $\mathbb R_+$
replaced by $[0,T]$.

\item For $T>0$, $\mathbb H_T^2(E)$ is the space of progressively measurable processes
$Y:\Omega\times[0,T]\to E$ such that
$\|Y\|_{\mathbb H_T^2(E)}:=\bigl(\mathbb E\int_0^T|Y_t|^2\,dt\bigr)^{1/2}<\infty$.
We set
$\mathbb H_\infty^2(E):=\bigcap_{T>0}\mathbb H_T^2(E)$.

\item For $p\geq1$, $\mathbb L^p_{\mathrm{unif}}(E)$ is the space of
progressively measurable processes $Y:\Omega\times\mathbb R_+\to E$ such that
$\|Y\|_{\mathbb L^p_{\mathrm{unif}}(E)}:=\sup_{t\in\R_+}\left\|\mathbb E_t\int_t^{t+1}|Y_s|^p\,ds\right\|_\infty^{1/p}<\infty$.

\item Let $\mathcal C_{\mathrm{sm}}^2$ be the space of semimartingale
random fields $V=(V_t(\cdot))_{t\geq0}$ s.t.
\begin{equation}\label{eq:semimartingale-random-field}
dV_t(x)=a_t(x)\,dt+\sum_{i=1}^d Z_t^i(x)\,dW_t^i,
\qquad \text{for every }x\in\mathbb R^n,
\end{equation}
where $V_t(\cdot)\in C^2(\mathbb R^n)$,
$Z_t^i(\cdot)\in C^1(\mathbb R^n)$,
$a_t(\cdot)\in C(\mathbb R^n)$, and, for every
$\Phi\in\{V,D_xV,D_x^2V\}$ and
$\Psi\in\{a,|Z^i|^2,|D_xZ^i|^2\}$, $i=1,\ldots,d$,
\[
\mathbb E\sup_{\substack{0\leq t\leq T\\|x|\leq R}}|\Phi_t(x)|
+\mathbb E\int_0^T\sup_{|x|\leq R}|\Psi_t(x)|\,dt
<\infty,
\qquad \text{for every }T,R>0.
\]
All the random fields involved are  progressively measurable
for all $x\in\mathbb R^n$.

\end{itemize}

For later use, we recall the It\^o--Wentzell operator. Let $X$ be an
$\mathbb R^n$-valued It\^o process of the form
$
dX_t=b_t\,dt+\sum_{i=1}^d\sigma_t^i\,dW_t^i.
$
For $V\in\mathcal C_{\mathrm{sm}}^2$ with decomposition
\eqref{eq:semimartingale-random-field}, the It\^o--Wentzell formula gives
\begin{gather}
dV_t(X_t)
=
\mathcal L_tV_t(X_t)\,dt
+\sum_{i=1}^d
\left[
Z_t^i(X_t)+D_xV_t(X_t)\cdot\sigma_t^i
\right]dW_t^i,
\quad\text{where}
\notag\\
\mathcal L_tV_t(x):=
a_t(x)+D_xV_t(x)\cdot b_t
+\sum_{i=1}^d
\left[
\frac12\bigl(D_x^2V_t(x)\sigma_t^i\bigr)\cdot\sigma_t^i
+D_xZ_t^i(x)\cdot\sigma_t^i
\right].
\label{eq:ito-wentzell-operator}
\end{gather}

\subsection{Forward dynamics and ergodic control problem}
\label{subsec:forward-dynamics-control}
The admissible control set is
$\mathcal U:=\mathbb H_\infty^2(\mathbb R^k)$. For $t\geq0$, $x\in\mathbb R^n$, and $\nu\in\mathcal U$, we consider
the controlled linear stochastic system for $s\geq t$ and running cost
\begin{align}
dX_s^{t,x;\nu}
&=
\left(A_sX_s^{t,x;\nu}+B_s\nu_s+f_s\right)\,ds
+\sum_{i=1}^d
\left(C_s^iX_s^{t,x;\nu}+D_s^i\nu_s+g_s^i\right)\,dW_s^i
 ,
\label{eq:controlled-LQ-dynamics}\\
X_t^{t,x;\nu}
&=x,
\qquad \text{and}\qquad
\ell_t(x,u):=S_tx\cdot x+|u|^2,
\quad (x,u)\in\mathbb R^n\times\mathbb R^k.
\notag
\end{align}
where the coefficients $A,B,C,D,f,g$, together with the quadratic
state cost coefficient $S$ of the running cost, satisfy the following
assumptions.

\begin{assumption}
\label{ass:bounded-coefficients}
For each $i=1,\ldots,d$,
$A,C^i\in\mathbb L^\infty(\mathbb R^{n\times n})$,
$B,D^i\in\mathbb L^\infty(\mathbb R^{n\times k})$,
$f,g^i\in\mathbb L^\infty(\mathbb R^n)$, and
$S\in\mathbb L^\infty(\mathcal S_+^n)$.
\end{assumption}

For a problem starting at time $t$, only the restrictions of $X$ and
$\nu$ to $[t,\infty)$ matter. Throughout the paper, we therefore leave
their values before $t$ unspecified.

Under Assumption~\ref{ass:bounded-coefficients}, for every
$\nu\in\mathcal U$, $t\geq0$, and $x\in\mathbb R^n$, the state equation
\eqref{eq:controlled-LQ-dynamics} admits a unique continuous strong solution
$X^{t,x;\nu}$ satisfying
$\mathbb E\big[\sup_{t\leq s\leq T}|X_s^{t,x;\nu}|^2\big]<\infty$
for every $T\geq t$. With the boundedness of the coefficients, these estimates make the
localization arguments used below standard, and we omit their routine details.

For the controlled dynamics \eqref{eq:controlled-LQ-dynamics} and
$u\in\mathbb R^k$, we define $\mathcal L_t^u$ by replacing $b_t$ and
$\sigma_t^i$ in \eqref{eq:ito-wentzell-operator} with
$A_tx+B_tu+f_t$ and $C_t^ix+D_t^iu+g_t^i$, respectively.
Thus, for every $\nu\in\mathcal U$ and
$V\in\mathcal C_{\mathrm{sm}}^2$, the drift coefficient of
$V_s(X_s^{t,x;\nu})$ is
$\mathcal L_s^{\nu_s}V_s(X_s^{t,x;\nu})$.

For $0\le t\le T\le\infty$ with $t<\infty$, the associated cost is defined by
\begin{equation}\label{eq:horizon-cost}
    J_t^T(x;\nu)
    :=
    \mathbb E_t
    \int_t^T
    \ell_s(X_s^{t,x;\nu},\nu_s)\,ds
    =
    \mathbb E_t
    \int_t^T
    \left(
        S_sX_s^{t,x;\nu}\cdot X_s^{t,x;\nu}
        +
        |\nu_s|^2
    \right)ds.
\end{equation}

When $t=0$, we simply write
$X^{x;\nu}:=X^{0,x;\nu}$ and $J^T(x;\nu):=J_0^T(x;\nu)$.
For every $\nu\in\mathcal U$, the state process satisfies
$X^{t,x;\nu}\in\mathbb H_\infty^2(\mathbb R^n)$.
Hence, by the boundedness of $S$,
$J_t^T(x;\nu)<\infty$, $\mathbb P$-a.s., for every
$0\leq t\leq T<\infty$.

Following \cite{Lin2026}, we introduce a general class of averaging
kernels:
\[
\mathcal K
:=
\left\{
K:\mathbb R_+\to\mathbb R_+:
K\text{ is nonincreasing and c\`adl\`ag},
\quad
\int_0^\infty K(t)\,dt=1
\right\}.
\]
For $K\in\mathcal K$ and $\rho>0$, define
\[
K_\rho(t):=\rho K(\rho t),
\qquad
J^{K_\rho}(x;\nu)
:=
\mathbb E\int_0^\infty
K_\rho(t)\ell_t(X_t^{x;\nu},\nu_t)\,dt
\in[0,\infty].
\]

The Ces\`aro kernel $K^0(t):=\mathbf 1_{[0,1)}(t)$
and the exponential kernel $K^{\rm exp}(t):=e^{-t}$
recover the normalized finite-horizon and discounted costs
in \eqref{eq:normalized-costs-intro}, respectively.

The lower and upper ergodic costs associated with $K$ are defined by
\[
J_*(K;x,\nu)
:=
\liminf_{\rho\downarrow0}J^{K_\rho}(x;\nu),
\qquad
J^*(K;x,\nu)
:=
\limsup_{\rho\downarrow0}J^{K_\rho}(x;\nu).
\]

Our objective is to characterize the infima of these costs over $\nu\in\mathcal U$,
to establish a verification principle, and to construct admissible
controls attaining these infima.

\begin{remark}
In several treatments of ergodic LQ control, the optimization is
restricted to a subclass of affine feedback controls satisfying
additional stability conditions on the corresponding closed-loop
systems; see, e.g., Mei, Wei, and Yong~\cite{MeiWeiYong2021}
and \cite{WuZhang2026}.
These conditions ensure the transversality conditions used in the
verification argument.
Here, we consider the general open-loop admissible class $\mathcal U$,
without imposing such conditions in advance.
Under the uniform coercivity assumption on $S$ stated below,
the verification argument applies to this entire $\mathcal U$; see
Theorem~\ref{thm:general-decreasing-kernels}.
\end{remark}

\subsection{Uniform stabilizability}
In classical ergodic control problems, dissipativity is a standard
condition used to obtain stability estimates for the state dynamics;
see, e.g., Fuhrman, Hu, and Tessitore~\cite{FuhrmanHuTessitore2009}.
In the linear--quadratic setting, following \cite{GuatteriMasiero2010}, we impose uniform stabilizability, formulated through a uniform
quadratic bound on the optimal infinite-horizon cost of the homogeneous
problem. We therefore consider the homogeneous problem obtained by setting
the affine terms $f$ and $g$ equal to zero. For $t\geq0$, $x\in\mathbb R^n$, and $\nu\in\mathcal U$, let
$\overline{X}^{t,x;\nu}$ denote the solution, for $s\geq t$, of
\begin{equation}\label{eq:homogeneous-controlled-dynamics}
d\overline{X}_s^{t,x;\nu}
=
\left(A_s\overline{X}_s^{t,x;\nu}+B_s\nu_s\right)\,ds
+
\sum_{i=1}^d
\left(C_s^i\overline{X}_s^{t,x;\nu}+D_s^i\nu_s\right)\,dW_s^i,
\quad \overline{X}_t^{t,x;\nu}=x.
\end{equation}
We define its infinite-horizon cost by
\begin{equation}\label{eq:homogeneous-infinite-horizon-cost}
\overline{J}_t^\infty(x;\nu)
:=
\mathbb E_t\int_t^\infty
\left(
S_s\overline{X}_s^{t,x;\nu}\cdot\overline{X}_s^{t,x;\nu}
+
|\nu_s|^2
\right)ds.
\end{equation}

\begin{assumption}
\label{ass:uniform-stabilizability}
There exist $M,\delta>0$ such that
$S_t\geq\delta I_n$, $d\mathbb P\otimes dt$-a.e., and
\begin{equation}\label{eq:uniform_stability}
\operatorname*{ess\,inf}_{\nu\in\mathcal U}
\overline J_t^\infty(x;\nu)
\leq M|x|^2,
\quad\mathbb P\text{-a.s., for every }t\geq0,\ x\in\mathbb R^n.
\end{equation}
\end{assumption}

This is a concise
formulation of the conditions required in \cite{GuatteriMasiero2010}.
Note that \cite[Definition~3.3 and Remark~3.5]{GuatteriMasiero2010}
allow $M$ in \eqref{eq:uniform_stability} to depend on $t$ to ensure the existence of a minimal nonnegative solution
$P$ to the infinite-horizon backward stochastic Riccati equation, see \cite[Corollary~3.7]{GuatteriMasiero2010}.
However, they require the boundedness of $P$ in \cite[Theorem~4.3]{GuatteriMasiero2010} and the stability condition in
\cite[Definition~4.1]{GuatteriMasiero2010} to establish the existence of solution for the dual equation. Under the uniform coercivity of $S$, these conditions are equivalent to \eqref{eq:uniform_stability}.

\begin{remark}\label{rem:sufficient-stability}
Arguing as in \cite[Remark~4.2]{GuatteriMasiero2010},
\eqref{eq:uniform_stability} holds under the dissipativity condition
$A_t^\top+A_t+\sum_{i=1}^d(C_t^i)^\top C_t^i
\leq-\beta I_n$, $d\mathbb P\otimes dt$-a.e.,
for some $\beta>0$.
\end{remark}

\subsection{Stochastic ergodic HJB formulation}
\label{subsec:ergodic-equation}

To study ergodic control problems with stochastic coefficients, we introduce
the stochastic ergodic HJB equation, motivated by the ergodic
HJB equation \cite{ArisawaLions1998,BensoussanFrehse1992} and stochastic
HJB equation of \cite{Peng1992}:
\begin{equation}\label{eq:stochastic-ergodic-hjb}
\lambda_t
=
\inf_{u\in\mathbb R^k}
\left\{
\ell_t(x,u)+\mathcal L_t^uV_t(x)
\right\},
\end{equation}
where $V$ is a semimartingale random field, and $\lambda$ is
a real-valued process, replacing the constant in the classical
ergodic HJB equation.

\begin{definition}
\label{def:ergodic_solution}
A pair $(V,\lambda)$ is called a solution of the stochastic ergodic HJB
equation if
$V\in\mathcal C_{\mathrm{sm}}^2$,
$\lambda\in\mathbb L^1_{\mathrm{unif}}(\mathbb R)$,
and, $d\mathbb P\otimes dt$-a.e.,
\eqref{eq:stochastic-ergodic-hjb} holds for all $x\in\mathbb R^n$,
and  
\begin{equation}\label{eq:growth}
-C\leq V_t(x)\leq C(1+|x|^2),~ t\geq0,~x\in\mathbb R^n,
~\mathbb P\text{-a.s. for some constant}
~C>0.
\end{equation}
\end{definition}

The quadratic growth in~\eqref{eq:growth} is natural for an LQ control problem, whose value field
is actually expected to be quadratic--affine in the state variable.
However, Remark~\ref{rem:verification-lower-bound} below shows that
even two-sided quadratic growth bounds do not ensure that
$\lambda$ characterizes the optimal ergodic costs,
and illustrates the role of the uniform lower bound for $V$
in Condition~\eqref{eq:growth}.

\begin{remark}
The formulation above is complementary to the path-dependent ergodic control
framework of \cite{Lin2026}. There, the problem is studied in a weak formulation with
time- and path-dependent coefficients without exogenous randomness,
leading naturally to a BSDE formulation with solution
$(Y,Z,\Lambda)$, where $\Lambda$ is deterministic. Here,
we use a strong formulation with random coefficients and state-dependent
dynamics, leading instead to a solution $(V,\lambda)$ of the stochastic
ergodic HJB equation.

Formally, $V$ corresponds to $Y$, while the martingale term in the
It\^o--Wentzell formula corresponds to $Z$. Moreover,
$\E\int_t^T\lambda_s\,ds$ corresponds to $\Lambda(T)-\Lambda(t)$. In
\cite{Lin2026},  $\Lambda$ may be chosen absolutely
continuous with a bounded density. In the present LQ setting, however, 
$\E[\lambda_t]$ associated with the canonical
quadratic solution need not be bounded. This
motivates the more general requirement
$\lambda\in\mathbb L^1_{\mathrm{unif}}(\mathbb R)$, which is
weak enough to include the canonical solution while still providing the
integrability needed for the verification argument and for averaging with
respect to the general kernels.
\end{remark}

\section{Main results}\label{sec:main-results}
\subsection{Well-posedness of the stochastic ergodic HJB equation}

Throughout this paper, solutions are understood in the sense of
Definition~\ref{def:ergodic_solution}.

\begin{theorem}
\label{thm:existence_ergodic_hjb}
Let Assumptions~\ref{ass:bounded-coefficients} and
\ref{ass:uniform-stabilizability} hold. Then

\textup{(i)}
The stochastic ergodic HJB equation admits a quadratic solution
$(V^0,\lambda^0)$ satisfying, for some $\delta_P>0$:
\begin{equation}\label{eq:quadratic-affine-ergodic-solution}
V_t^0(x)=P_tx\cdot x+2r_t\cdot x,
~~\text{with }~
r\in\mathbb S^\infty(\mathbb R^n),
~
P\in\mathbb S^\infty(\mathcal S_+^n),
~
P\geq\delta_P I_n,
~\mathbb P-\text{a.s.}
\end{equation}

\textup{(ii)}
For every solution $(V,\lambda)$, we have
$
\sup_{0\leq t\leq T<\infty}
\Big\|
\mathbb E_t\int_t^T
(\lambda_s^0-\lambda_s)\,ds
\Big\|_\infty
<\infty.
$

\textup{(iii)} If, in addition, $V$ is uniformly coercive
and locally uniformly continuous in $x$, i.e., for some
$\widetilde c>0$ and for all $R>0$, $\mathbb P$-a.s. 
\begin{equation}\label{eq:uniform-local-continuity}
\operatorname*{essinf}_{\omega\in\Omega}
\!\inf_{\substack{t\geq0,x\in\mathbb R^n}}
\bigl(V_t(\omega,x)-\widetilde c|x|^2\bigr)
>-\infty,
\qquad
\lim_{\eta\downarrow0}\varpi_V^R(\eta)=0,
\end{equation}
where $\varpi_V^R$ denotes the spatial modulus of continuity of $V$
on the ball of radius $R$ centered at the origin,
uniformly over $t\geq0$,
then $V_t^0(x)-V_t(x)
-\int_0^t(\lambda_s^0-\lambda_s)\,ds$
does not depend on $x$ and is a local martingale.
\end{theorem}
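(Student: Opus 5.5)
For part (i), I will plug the quadratic--affine ansatz $V^0_t(x)=P_tx\cdot x+2r_t\cdot x$ into \eqref{eq:stochastic-ergodic-hjb}. Here $(P,Q)$ solves the infinite-horizon backward stochastic Riccati equation on $\R_+$ and $(r,k)$ solves the dual BSDE of \eqref{eq:riccati-dual-intro}.

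By It\^o--Wentzell, $a_t(x)=-\mathcal R_tx\cdot x-2\mathcal G_t\cdot x$ and $Z^i_t(x)=Q^i_tx\cdot x+2k^i_t\cdot x$. The Hamiltonian is quadratic in $u$ with Hessian $I_k+\sum_i (D^i_t)^\top P_tD^i_t\ge I_k$, so it has a unique minimizer, which is affine in $x$. Matching the $x$-quadratic and $x$-linear coefficients to zero is exactly what the Riccati driver $\mathcal R$ and dual driver $\mathcal G$ encode. The remaining $x$-free term defines $\lambda^0_t$. It equals $2r_t\cdot f_t+\sum_i(P_tg^i_t\cdot g^i_t+2k^i_t\cdot g^i_t)$ minus the squared norm of the minimizer's constant part in the $(I+\sum D^\top PD)$-metric.

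Existence of a bounded $P\ge0$ is taken from \cite{GuatteriMasiero2010} under Assumption~\ref{ass:uniform-stabilizability}. The bound $P_t\ge\delta_P I_n$ comes from $P_tx\cdot x=\operatorname*{ess\,inf}_\nu\overline J^\infty_t(x;\nu)$ together with $S\ge\delta I$ and a short-time argument: on $[t,t+1]$ the state cannot leave a neighborhood of $x$ cheaply. For the dual equation, I will extend the construction of \cite{GuatteriMasiero2010} to bounded $f,g$. I will solve it on $[0,T]$ with terminal value $0$ and represent $r$ through the exponentially stable closed-loop adjoint flow, which gives exponential decay of its conditional second moments. This yields $\|r\|_{\mathbb S^\infty}$ bounded uniformly in $T$, and I then let $T\to\infty$.

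The key estimates are $Q,k\in\mathbb L^2_{\rm unif}$. I will get them by applying It\^o to $|P|^2$ (resp.\ $|r|^2$) on $[t,t+1]$ and taking $\mathbb E_t$, using the boundedness of $P,r$. These give $\lambda^0\in\mathbb L^1_{\rm unif}$, since $\lambda^0$ is linear in $k$ plus bounded terms. The growth bound \eqref{eq:growth} follows from $V^0\ge\delta_P|x|^2-2\|r\|_\infty|x|\ge -C$.

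For (ii), I will use a comparison via controlled trajectories. Let $\hat\nu$ be the affine feedback optimal for $V^0$, and $\hat X$ its state. The closed loop is stable by the Riccati structure, so $\sup_s\mathbb E_t|\hat X_s|^2\le C(1+|x|^2)$. Applying It\^o--Wentzell to $V_s(\hat X_s)$ and using $\lambda_s\le \ell_s+\mathcal L^{\hat\nu_s}_sV_s$ gives $\mathbb E_t\int_t^T\lambda_s\,ds\le \mathbb E_t[V_T(\hat X_T)]-V_t(x)+\mathbb E_t\int_t^T\ell_s\,ds$. For $V^0$ the same holds with equality. Subtracting, with $x=0$, and using $-C\le V\le C(1+|x|^2)$ together with the bounded second moments of $\hat X$ and the bound on $V^0$, I get a uniform bound $\mathbb E_t\int_t^T(\lambda^0_s-\lambda_s)\,ds\ge -C$.

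The reverse inequality swaps the roles. I take the feedback $\tilde\nu$ minimizing for $V$; if that is not available, an $\varepsilon$-minimizer measurably selected. I then use the coercivity of $V^0$: along $\tilde X$, the identity $V^0$ dominates gives $\mathbb E_t\int\ell\le \mathbb E_t\int\lambda+V_t-\mathbb E_tV_T(\tilde X_T)$. Since $\ell\ge\delta|x|^2$ and $V\ge-C$, this controls $\mathbb E_t\int_t^T|\tilde X_s|^2ds$ by $(T-t)$ times the $\mathbb L^1_{\rm unif}$ norm of $\lambda$. That is not enough to bound the endpoint, so I will choose a good intermediate time via the averaged bound on $[T-1,T]$ and a conditional Chebyshev argument. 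This is the main obstacle: obtaining transversality for an arbitrary solution $V$ using only the lower bound $V\ge -C$. I expect the one-sided control to suffice, because in each direction only $-V_T$ or $V^0_T$ has to be bounded above.

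For (iii), I will show that $\Delta_t(x):=V^0_t(x)-V_t(x)-\int_0^t(\lambda^0-\lambda)ds$ is independent of $x$. I will do this by running the argument of (ii) from two starting points $x,y$ at time $t$ and letting $T\to\infty$. Uniform coercivity makes both closed loops contractive, so $\hat X^{t,x}_s-\hat X^{t,y}_s\to0$ in $L^2$. Local uniform continuity then gives $\mathbb E_t[V_T(\hat X^x_T)-V_T(\hat X^y_T)]\to0$. The squeeze in (ii) then yields $\Delta_t(x)=\Delta_t(y)$. The local martingale property follows from the It\^o--Wentzell drift of $\Delta_t(\hat X_t)$: it vanishes because both HJB equations are attained along the common minimizer, which is forced once $\Delta$ is $x$-independent, since the $D_x$ terms coincide.
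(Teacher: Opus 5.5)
Your part (i) follows the paper's route: Guatteri--Masiero for $P$, a short-time argument for $P\ge\delta_PI_n$, duality with the stable homogeneous closed loop for $r$, and It\^o on $|P|^2$, $|r|^2$ for the $\mathbb L^2_{\mathrm{unif}}$ bounds. One correction: $\lambda^0$ is quadratic in $k$ through $N^{-1}\eta\cdot\eta$, not linear, but this is harmless since $k\in\mathbb L^2_{\mathrm{unif}}$. Your first inequality in (ii), along the $V^0$-optimal affine loop, is also fine.

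\textbf{Gap in (ii).} The reverse inequality $\mathbb E_t\int_t^T(\lambda^0_s-\lambda_s)\,ds\le C$ fails as proposed. Along an $\varepsilon$-optimal control $\tilde\nu$ for $V$ you need $\sup_T\mathbb E_tV^0_T(\tilde X_T)\le C$. But $V$ is an arbitrary, non-coercive solution, and your only estimate is $\delta\,\mathbb E_t\int_t^T|\tilde X_s|^2ds\le C(1+T-t)$. This does not control $\mathbb E_t\int_{T-1}^T|\tilde X_s|^2ds$, since the mass may concentrate near $T$. So the averaged bound on $[T-1,T]$ that your Chebyshev step needs is not available, and $V\ge -C$ cannot supply it.

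The missing idea is a Gronwall argument that bypasses transversality. It uses the upper quadratic bound of the solution together with $S\ge\delta I_n$. For any $\nu\in\mathcal U$ and any solution $(U,\theta)$, set $G(T):=J_t^T(0;\nu)-\mathbb E_t\int_t^T\theta_s\,ds$. Then $G(T)\ge U_t(0)-\mathbb E_tU_T(X_T)\ge -C-C\,\mathbb E_t|X_T|^2$ and $G'(T)\ge\delta\,\mathbb E_t|X_T|^2-\mathbb E_t\theta_T$. Eliminating $\mathbb E_t|X_T|^2$ gives $G'+\frac{\delta}{C}G\ge -C-\mathbb E_t|\theta_T|$. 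Since $\theta\in\mathbb L^1_{\mathrm{unif}}$, this yields $G\ge -C$ uniformly in $t$, $T$ and $\nu$. Along an $\varepsilon$-optimal control of each solution, $U\ge -C$ gives $G\le C+\varepsilon$. Hence $\operatorname*{ess\,inf}_\nu J_t^T(0;\nu)$ lies within $O(1)$ of both $\mathbb E_t\int_t^T\lambda^0_s\,ds$ and $\mathbb E_t\int_t^T\lambda_s\,ds$, which is (ii). Applied to $(V,\lambda)$ along $\tilde\nu$, the same bound also gives $\mathbb E_t\int_{T-1}^T|\tilde X_s|^2ds\le C$, which is exactly what your intermediate-time argument lacked.

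\textbf{Gap in (iii).} The two sides of your squeeze live on different trajectories. Let $D:=V^0-V$ and $a(T):=\mathbb E_t\int_t^T(\lambda^0_s-\lambda_s)\,ds$. With $\hat X^x$ the $V^0$-optimal loop and $\tilde X^x$ a $V$-near-optimal loop, the argument of (ii) gives
\[
\mathbb E_tD_T(\hat X^x_T)-a(T)\le D_t(x)\le\mathbb E_tD_T(\tilde X^x_T)-a(T)+\varepsilon .
\]
Hence $D_t(x)-D_t(y)\le\mathbb E_t[D_T(\tilde X^x_T)-D_T(\hat X^y_T)]+\varepsilon$, and the contraction $\hat X^x-\hat X^y\to0$ does not make the right side vanish. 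You would also need two further ingredients:
\begin{itemize}
\item $\tilde X^x_T-\hat X^x_T\to0$, a turnpike property. This can be obtained by completing the square with $P$ for $\tilde X^x-\hat X^x$, using (ii) and the moment bound that coercivity of $V$ provides.
\item A bound $|D|\le C$. Bounded second moments alone do not give the uniform integrability needed to turn local uniform continuity into convergence of expectations of quadratically growing terms.
\end{itemize}

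The paper instead first proves $|V^0_t(x)-V_t(x)|\le C$. It then uses the midpoint estimate to show that $V_t-V^0_t$ is convex. That estimate combines strict convexity of $\ell$, the HJB inequality at the midpoint control, $X^x_T-X^y_T\to0$ for the $V$-loops, and $\frac14\overline J^\infty_t(x-y;w)\ge\frac14P_t(x-y)\cdot(x-y)$. A bounded convex function on $\mathbb R^n$ is constant, so $V_t-V^0_t$ is constant in $x$. Your last step, from $x$-independence to the local martingale property, is fine.
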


The proof of \textup{(i)} is reported in
Section~\ref{subsec:quadratic-solution}, and the proofs of
\textup{(ii)} and \textup{(iii)} are reported in
Section~\ref{sec:proof_ergodic_HJB}.

The coercivity condition in
Theorem~\ref{thm:existence_ergodic_hjb}\textup{(iii)} is the natural LQ
version of the condition used in \cite{BensoussanFrehse1992}, where the
solution is required to tend to $+\infty$ as $|x|\to\infty$.
The following example shows that the uniform local
continuity assumption is also essential.

\begin{example}
Let $dX_t=-X_t\,dt$ and $\ell(x,u)=x^2+u^2$.
Then, for any $\phi\in C_b^2(\mathbb R)$,
$(V^\phi_t(x),\lambda_t)
=\left(\frac12x^2+\phi(e^tx),0\right)$
is a coercive solution of the stochastic ergodic HJB equation
in the sense of Definition~\ref{def:ergodic_solution}, since
$\partial_t\phi(e^tx)-xD_x\phi(e^tx)=0$.
If $\phi$ is nonconstant, then $V^\phi$ is not locally uniformly
continuous in $x$, uniformly in $t$, and
$V^\phi_t(x)-V^0_t(x)=\phi(e^tx)$ is not constant in $x$.
\end{example}

\begin{remark}
\label{rem:additional-uniqueness-assumptions}
A convenient sufficient condition for the uniform local continuity in
Theorem~\ref{thm:existence_ergodic_hjb}~\textup{(iii)} is the convexity of $V_t$.
Indeed, if $x\mapsto V_t(x)$ is convex, then the growth
bounds in \eqref{eq:growth} imply
$
|D_xV_t(x)|\leq C'(1+|x|)
$
for some constant $C'>0$.
Thus $V$ is locally Lipschitz in $x$, uniformly in $t$,
and satisfies the continuity condition in
Theorem~\ref{thm:existence_ergodic_hjb}~\textup{(iii)}.
\end{remark}

\subsection{Structure of the quadratic solution}
\label{subsec:quadratic-solution}
The coefficients $P$ and $r$ in the quadratic solution
\eqref{eq:quadratic-affine-ergodic-solution} are constructed
from the infinite-horizon backward stochastic Riccati equation
and the associated dual BSDE, building on
\cite{GuatteriMasiero2010}.
We use their existence result for the Riccati equation and
adapt their arguments to establish existence for the dual
equation in our setting, obtaining further properties
of both solutions.

For $p\in\mathcal S_+^n$, $q=(q^1,\ldots,q^d)\in(\mathcal S^n)^d$,
$r\in\mathbb R^n$, and $k=(k^1,\ldots,k^d)\in(\mathbb R^n)^d$,
define the following maps, with $i=1,\ldots,d$:
\begin{gather}
N_t(p):=I_k+\sum_{i=1}^d(D_t^i)^\top pD_t^i,
\qquad
L_t(p,q):=B_t^\top p+\sum_{i=1}^d(D_t^i)^\top(pC_t^i+q^i),
\label{eq:LQ-coefficient-maps}\\
\Gamma_t(p,q)\!:=\!-N_t(p)^{-1}L_t(p,q),
H_t(p,q)\!:=\!A_t+B_t\Gamma_t(p,q),
M_t^i(p,q)\!:=\!C_t^i+D_t^i\Gamma_t(p,q),
\notag\\
\varphi_t(p,q):=pf_t+\sum_{i=1}^d
\bigl(M_t^i(p,q)^\top p+q^i\bigr)g_t^i,
\quad
\eta_t(p,r,k):=B_t^\top r+\sum_{i=1}^d(D_t^i)^\top(pg_t^i+k^i),
\notag\\
\mathcal R_t(p,q):=
A_t^\top p+pA_t
+\sum_{i=1}^d\bigl((C_t^i)^\top pC_t^i
+(C_t^i)^\top q^i+q^iC_t^i\bigr)
+S_t+L_t(p,q)^\top \Gamma_t(p,q),
\notag\\
\mathcal G_t(p,q,r,k):=
H_t(p,q)^\top r+\sum_{i=1}^dM_t^i(p,q)^\top k^i
+\varphi_t(p,q).\notag
\end{gather}

We consider the infinite-horizon backward stochastic Riccati equation
and its associated dual BSDE:
\begin{align}
dP_t
&=-\mathcal R_t(P_t,Q_t)\,dt
+\sum_{i=1}^dQ_t^i\,dW_t^i,
\qquad t\geq0,
\label{eq:infinite-horizon-riccati}\\
dr_t
&=-\mathcal G_t(P_t,Q_t,r_t,k_t)\,dt
+\sum_{i=1}^dk_t^i\,dW_t^i,
\qquad t\geq0.
\label{eq:infinite_dual}
\end{align}

\begin{proposition}
\label{prop:infinite-horizon-riccati}
Let Assumptions~\ref{ass:bounded-coefficients} and
\ref{ass:uniform-stabilizability} hold.

\textup{(i)}
Equation~\eqref{eq:infinite-horizon-riccati} admits a unique solution
$(P,Q)\in\mathbb S^\infty(\mathcal S_+^n)
\times\mathbb H_\infty^2((\mathcal S^n)^d)$.
Moreover, $Q\in\mathbb L^2_{\mathrm{unif}}((\mathcal S^n)^d)$ and
$P\geq\delta_P I_n$, $\mathbb P$-a.s., for some $\delta_P>0$.

\textup{(ii)}
For $(P,Q)$ given by \textup{(i)},
equation~\eqref{eq:infinite_dual} admits a unique solution
$(r,k)\in\mathbb S^\infty(\mathbb R^n)
\times\mathbb H_\infty^2((\mathbb R^n)^d)$.
Moreover, $k\in\mathbb L^2_{\mathrm{unif}}((\mathbb R^n)^d)$.
\end{proposition}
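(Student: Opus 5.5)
For part (i) I would take existence from \cite[Corollary~3.7]{GuatteriMasiero2010}. Assumption~\ref{ass:uniform-stabilizability} gives a minimal nonnegative solution $(P,Q)$ of \eqref{eq:infinite-horizon-riccati} with $P_tx\cdot x=\operatorname*{ess\,inf}_\nu\overline J_t^\infty(x;\nu)\le M|x|^2$, so $P\in\mathbb S^\infty(\mathcal S^n_+)$.

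The lower bound $P\ge\delta_P I_n$ comes from coercivity of $S$. Since $S\ge\delta I_n$, we have $\overline J_t^\infty(x;\nu)\ge\delta\,\mathbb E_t\int_t^{t+1}|\overline X_s|^2ds+\mathbb E_t\int_t^{t+1}|\nu_s|^2ds$. By bounded coefficients, a Gronwall estimate on $[t,t+1]$ gives $\mathbb E_t\int_t^{t+1}|\overline X_s|^2ds\ge c|x|^2-C\,\mathbb E_t\int_t^{t+1}|\nu_s|^2ds$. Combining these yields $\overline J_t^\infty\ge c'|x|^2$.

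For $Q\in\mathbb L^2_{\mathrm{unif}}$, I would apply Itô's formula to $|P_t|^2$ (Frobenius norm) on $[t,t+1]$ and take $\mathbb E_t$. This gives
\[
\mathbb E_t\int_t^{t+1}|Q_s|^2ds=\mathbb E_t|P_{t+1}|^2-|P_t|^2+2\,\mathbb E_t\int_t^{t+1}P_s\cdot\mathcal R_s(P_s,Q_s)\,ds .
\]
Since $N\ge I_k$ and $P$ is bounded, $|\mathcal R_s|\le C(1+|Q_s|+|Q_s|^2)$. The quadratic term needs care. The part $-L^\top N^{-1}L$ is negative semidefinite, so I would instead apply Itô to $\operatorname{tr}(P_t)$ or to $|P_t|^2$ with an exponential weight $e^{\gamma\operatorname{tr}P}$. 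A cleaner route is the identity $\operatorname{tr}\mathcal R\ge -C-C|Q|+\operatorname{tr}(\dots)$ combined with $d(P^2)$. In practice I would first use the standard trick from \cite{Tang2003}: the processes $P_tx\cdot x$ are bounded semimartingales whose drift is bounded below by $-C(1+|Q|)$ after dropping the negative term. Then Itô on $(P_tx\cdot x)^2$ gives $\mathbb E_t\int|Q_sx\cdot x|^2\le C+C\,\mathbb E_t\int|Q_s|$. Summing over a basis and polarizing, then absorbing via Young's inequality, gives a uniform bound.

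Uniqueness in the class $\mathbb S^\infty\times\mathbb H^2_\infty$ would follow from a verification identity. For any bounded solution $(\widetilde P,\widetilde Q)$, the feedback $\Gamma(\widetilde P,\widetilde Q)$ gives $\widetilde P_tx\cdot x=\overline J_t^{T}(x;\nu^*)+\mathbb E_t[\widetilde P_T\overline X_T\cdot\overline X_T]$. Coercivity of $S$ makes $\mathbb E_t\int_t^\infty|\overline X^*|^2<\infty$, so a subsequence of the terminal terms vanishes, and therefore $\widetilde P$ equals the value, which is $P$.

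For part (ii), \eqref{eq:infinite_dual} is a linear BSDE with bounded $\varphi$, since $\Gamma$ involves $Q$ and so does $\varphi$ through $M^i$ and $q^ig^i$. The coefficients $H,M$ are unbounded, but they lie in $\mathbb L^2_{\mathrm{unif}}$. I would solve on $[0,T]$ with $r_T=0$ (Kohlmann--Tang \cite{KohlmannTang2003}) and use the duality representation $r^T_t=\mathbb E_t\int_t^T\Phi_{t,s}^\top\varphi_s\,ds$, where $\Phi$ solves the closed-loop homogeneous SDE $d\Phi=H\Phi\,ds+\sum M^i\Phi\,dW^i$. The key exponential stability estimate is $\mathbb E_t|\Phi_{t,s}y|^2\le C e^{-\alpha(s-t)}|y|^2$. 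It comes from $d(P\Phi y\cdot\Phi y)=-(S+\Gamma^\top\Gamma)\Phi y\cdot\Phi y\,ds+dM$ together with $\delta_P\le P\le M$. This gives exponential decay of $\mathbb E_t P_s\Phi y\cdot\Phi y$ by Gronwall, hence $|r^T_t|\le C$ uniformly. It also shows $r^T$ is Cauchy as $T\to\infty$ with error $Ce^{-\alpha(T-t)}$.

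The main obstacle is the integrand $\Phi^\top\varphi$: $\varphi$ contains $q^ig^i$, which lies only in $\mathbb L^2_{\mathrm{unif}}$. I would handle it by Cauchy--Schwarz on unit intervals, bounding $\mathbb E_t\int_j^{j+1}|\Phi_{t,s}|(1+|Q_s|)ds$ by $\bigl(\mathbb E_t\int|\Phi|^2\bigr)^{1/2}\bigl(\mathbb E_t\,\mathbb E_j\int|Q|^2\bigr)^{1/2}\le Ce^{-\alpha(j-t)/2}$, and then summing over $j$. The limit $r$ solves \eqref{eq:infinite_dual}. The bound $k\in\mathbb L^2_{\mathrm{unif}}$ follows from Itô on $|r|^2$ over $[t,t+1]$, using $|\mathcal G|\le C(1+|Q|)(1+|r|+|k|)$ and absorbing the $|Q||k|$ terms. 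That absorption needs a weight $e^{\beta\int|Q|^2}$-type argument, or the BMO property of $\int Q\,dW$, which I would derive from the $\mathbb L^2_{\mathrm{unif}}$ bound. Uniqueness follows because the difference of two bounded solutions satisfies $\widetilde r_t=\mathbb E_t[\Phi_{t,T}^\top\widetilde r_T]\to0$.
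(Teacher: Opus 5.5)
You follow the paper's proof in most places. Existence and the value representation come from \cite{GuatteriMasiero2010}, and a short-time lower bound on $\overline J_t^\infty$ gives $P\geq\delta_P I_n$. For $Q\in\mathbb L^2_{\mathrm{unif}}$ you apply It\^o's formula on $[t,t+1]$ to $(P_tx\cdot x)^2$ and polarize; this is equivalent to the paper's use of $|P_t|^2$ with $\operatorname{Tr}[P_t\mathcal R_t(P_t,Q_t)]\leq C(1+|Q_t|)$, since both rest on $P\geq0$ and $L^\top N^{-1}L\geq0$. Part (ii) also matches: finite-horizon approximations, duality against the closed loop \eqref{eq:homogeneous-closed-loop} with Cauchy--Schwarz on unit intervals, and uniqueness by the same duality.

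You depart from the paper in two places.
\begin{enumerate}
\item The paper imports \eqref{eq:exponential_stability} from \cite[Theorem~4.3]{GuatteriMasiero2010}; you prove it with a Lyapunov argument. Along the closed loop, $\mathbb E_t[P_sX_s\cdot X_s]$ decreases at rate $\mathbb E_t[(S_s+\Gamma_s^\top\Gamma_s)X_s\cdot X_s]\geq(\delta/\|P\|_{\mathbb S^\infty})\,\mathbb E_t[P_sX_s\cdot X_s]$, and $P\geq\delta_P I_n$ turns this into decay of $\mathbb E_t|X_s|^2$. This is correct after localization (first use $P\geq0$ and Fatou to get $\Gamma X\in\mathbb H^2$), and it makes the proof more self-contained.
\item For uniqueness of $P$, the paper completes squares along the exponentially stable closed loop of the minimal solution to get $P\geq\widetilde P$, then uses minimality. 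You instead want to identify every solution in the class with the value function, using only coercivity of $S$.
\end{enumerate}

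\textbf{The uniqueness argument proves the wrong inequality.} Along the $\widetilde\Gamma$-feedback, $\widetilde P\geq0$ gives $\widetilde P_tx\cdot x\geq\overline J_t^T(x;\nu^*)$ for every $T$. Hence $\widetilde P_tx\cdot x\geq\overline J_t^\infty(x;\nu^*)\geq P_tx\cdot x$. Minimality already gives this, and no vanishing terminal term is needed for it.

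To conclude that $\widetilde P$ equals the value you need $\widetilde P\leq P$. That requires completing squares for an arbitrary $\nu\in\mathcal U$, which gives $\widetilde P_tx\cdot x\leq\overline J_t^T(x;\nu)+\mathbb E_t[\widetilde P_T\overline X_T^{t,x;\nu}\cdot\overline X_T^{t,x;\nu}]$. On $\{\overline J_t^\infty(x;\nu)<\infty\}$, coercivity of $S$ gives $\int_t^\infty\mathbb E_t|\overline X_s^{t,x;\nu}|^2\,ds<\infty$. So the terminal term has $\liminf_{T\to\infty}$ equal to zero, and this is where your subsequence argument belongs. Taking the essential infimum over $\nu$ then gives $\widetilde P\leq P$. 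With this fix your route is valid, and the uniqueness proof for $P$ no longer needs exponential stability.

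\textbf{Smaller points.}
\begin{itemize}
\item $\varphi$ is not bounded; you only have $|\varphi|\leq C(1+|Q|)$, as you note later.
\item No exponential weight or BMO argument is needed for $k\in\mathbb L^2_{\mathrm{unif}}$. Since $r$ is already bounded, $2|r||M^i||k^i|\leq\frac12|k^i|^2+2|r|^2|M^i|^2$ with $|M^i|^2\leq C(1+|Q|^2)$. Plain Young's inequality and $Q\in\mathbb L^2_{\mathrm{unif}}$ then suffice, as in the paper.
\item Before passing to the limit in the dual equation, you should show how $k^T$ converges. The paper applies It\^o's formula to $|\Delta r|^2$ to get convergence in $\mathbb H^2_{T_0}$.
\end{itemize}
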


The proof is reported in Section~\ref{sec:existence_ergodic_HJB}. In what follows, $(P,Q,r,k)$ denotes the solution given by
Proposition~\ref{prop:infinite-horizon-riccati}.
For simplicity, we write
$N_t:=N_t(P_t)$, $L_t:=L_t(P_t,Q_t)$,
and $\eta_t:=\eta_t(P_t,r_t,k_t)$. 
For $t\geq0$ and $x\in\mathbb R^n$, we define
\begin{equation}\label{eq:quadratic-ergodic-value-field}
V_t^0(x):=P_tx\cdot x+2r_t\cdot x,
~~
\lambda_t^0:=2f_t\cdot r_t
+\sum_{i=1}^d\left(P_tg_t^i\cdot g_t^i+2k_t^i\cdot g_t^i\right)
-N_t^{-1}\eta_t\cdot\eta_t.
\end{equation}

\begin{proof}[Proof of Theorem~\ref{thm:existence_ergodic_hjb} \textup{(i)}]
Let $V^0$ and $\lambda^0$ be defined by
\eqref{eq:quadratic-ergodic-value-field}.
The martingale coefficients of $V^0$ are
$Z_t^{0,i}(x)=Q_t^ix\cdot x+2k_t^i\cdot x$, $i=1,\ldots,d$.
Using \eqref{eq:infinite-horizon-riccati} and
\eqref{eq:infinite_dual} and completing the square,
as in the proof of \cite[Theorem~5.2]{GuatteriMasiero2010},
we obtain, $d\mathbb P\otimes dt$-a.e.,
\begin{equation}\label{eq:completion-of-squares}
\ell_t(x,u)+\mathcal L_t^uV_t^0(x)-\lambda_t^0
=
\left|N_t^{\frac{1}{2}}\bigl(u+N_t^{-1}(L_tx+\eta_t)\bigr)\right|^2,
\,\text{for }(x,u)\in\mathbb R^n\times\mathbb R^k,
\end{equation}
Taking the infimum over $u$ yields
\eqref{eq:stochastic-ergodic-hjb}.
The equations and estimates in
Proposition~\ref{prop:infinite-horizon-riccati} \textup{(i)--(ii)} imply
$V^0\in\mathcal C_{\mathrm{sm}}^2$ and
$\lambda^0\in\mathbb L^1_{\mathrm{unif}}(\mathbb R)$.
Moreover, the uniform coercivity of $P$ and the boundedness of
$P$ and $r$ give $-C\leq V_t^0(x)\leq C(1+|x|^2)$.
Thus, $(V^0,\lambda^0)$ is a solution of the stochastic ergodic HJB equation in the sense of
Definition~\ref{def:ergodic_solution}.
\end{proof}

\subsection{Ergodic values and verification}
\label{subsec:ergodic-values-verification}

In this subsection, we characterize the optimal ergodic costs through
the process $\lambda$ and establish a verification principle for optimal
controls. For $K\in\mathcal K$ and
$\lambda\in\mathbb L^1_{\mathrm{unif}}(\mathbb R)$, define
\begin{equation*}
\Lambda^K
:=
\mathbb E\int_0^\infty K(t)\lambda_t\,dt
=
-\int_{(0,\infty)}\Lambda(t)\,dK(t),
\end{equation*}
where $\Lambda(t):=\mathbb E\int_0^t\lambda_s\,ds$.
The last equality follows from Lebesgue--Stieltjes integration by parts,
noting that $\Lambda$ is absolutely continuous with
$\Lambda'(t)=\mathbb E[\lambda_t]$ for almost every $t\geq0$,
has at most linear growth, and satisfies $\Lambda(0)=0$,
while $\lim_{t\to\infty}tK(t)=0$ by the monotonicity and
integrability of $K$.

The lower and upper ergodic values associated with $\lambda$ and $K\in\K$
are defined by:
\[
\lambda_*(K):=\liminf_{\rho\downarrow0}\Lambda^{K_\rho},
\qquad
\lambda^*(K):=\limsup_{\rho\downarrow0}\Lambda^{K_\rho}.
\]

These quantities are independent of the choice of the stochastic ergodic
HJB solution. Let $(V^0,\lambda^0)$ be the quadratic
solution, and let $\Lambda^0,\Lambda^{0,K}$ denote
the corresponding quantities.
Taking $t=0$ in Theorem~\ref{thm:existence_ergodic_hjb}\textup{(ii)}
gives $\|\Lambda-\Lambda^0\|_\infty<\infty$. Then
\[
\left|\Lambda^{K_\rho}-\Lambda^{0,K_\rho}\right|
\leq K_\rho(0)\|\Lambda-\Lambda^0\|_\infty
=\rho K(0)\|\Lambda-\Lambda^0\|_\infty
\xrightarrow[\rho\downarrow0]{}0.
\]

Hence $\lambda_*(K)$ and $\lambda^*(K)$ are independent of the choice of
$(V,\lambda)$. To compare the ergodic values associated with different kernels,
define
\[
\mathcal K_W
:=\left\{
K\in\mathcal K\cap\mathrm{AC}:
\mathscr F\bigl[e^{-2u}K'(e^{-u})\bigr](q)\neq0
\text{ for all }q\in\mathbb R
\right\},
\]
where $\mathrm{AC}$ denotes the class of absolutely continuous
functions on $\mathbb R_+$ and $\mathscr F$ denotes the Fourier
transform. In particular, $K^{\rm exp}\in\mathcal K_W$, since
$
\mathscr F\bigl[e^{-2u}(K^{\rm exp})'(e^{-u})\bigr](q)
=-\Gamma(2+iq)\neq0.
$

Recall that $K^0:=\mathbf 1_{[0,1)}$.
The following proposition gives the analogue of
Proposition~2.2 in \cite{Lin2026} for a stochastic process
$\lambda\in\mathbb L^1_{\mathrm{unif}}(\mathbb R)$.

\begin{proposition}
\label{prop:relation-value}
For all $K\in\mathcal K$, we have
$\lambda_*(K^0)\leq\lambda_*(K)\leq\lambda^*(K)\leq\lambda^*(K^0)$.
Moreover, if $\lambda_*(\widehat K)=\lambda^*(\widehat K)$
for some $\widehat K\in\{K^0\}\cup\mathcal K_W$, then
$\lambda_*=\lambda^*$ on $\mathcal K$.
\end{proposition}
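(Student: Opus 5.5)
The plan is to reduce everything to the deterministic function $\Lambda(t)=\mathbb E\int_0^t\lambda_s\,ds$ and then argue as in \cite[Proposition~2.2]{Lin2026}, whose proof only uses properties of a deterministic function.

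\textbf{Step 1: properties of $\Lambda$.} Since $\lambda\in\mathbb L^1_{\mathrm{unif}}(\mathbb R)$, the tower property gives, for $0\le t\le t'$,
\[
|\Lambda(t')-\Lambda(t)|\le \mathbb E\int_t^{t'}|\lambda_s|\,ds\le \|\lambda\|_{\mathbb L^1_{\mathrm{unif}}}\,(t'-t+1).
\]
Hence $\Lambda$ has at most linear growth, and $A(T):=\Lambda(T)/T=\Lambda^{K^0_{1/T}}$ is bounded for $T\ge1$. Moreover $A$ is \emph{slowly oscillating}: $|A(T')-A(T)|\to0$ as $T\to\infty$ with $T'/T\to1$.

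\textbf{Step 2: mixture representation and the inequalities.} Every $K\in\mathcal K$ is a mixture of Ces\`aro kernels:
\[
K(t)=\int_{(0,\infty)}\tfrac1s\mathbf 1_{[0,s)}(t)\,\mu(ds),\qquad \mu(ds):=s\,(-dK)(s).
\]
Here $\mu$ is a probability measure because $\int_0^\infty K\,dt=\int s\,(-dK)(s)=1$, using $K(\infty)=0$ and Fubini. Applying Fubini again (justified by the linear growth of $\Lambda$),
\[
\Lambda^{K_\rho}=\int_{(0,\infty)}A(s/\rho)\,\mu(ds).
\]
The integrand is bounded for $s/\rho\ge1$. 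For $s/\rho<1$ we have $|\Lambda(s/\rho)|\le 2C$, so $|A(s/\rho)|\le 2C\rho/s$. This region contributes at most $2C\rho\,\mu(0,\rho)/\rho\cdot$, i.e.\ at most $2C\int_{(0,\rho)}(-dK)\le 2C(K(0)-K(\rho))\to0$, using right-continuity of $K$ at $0$. Applying Fatou's lemma to $A(s/\rho)-\inf_{T\ge1}A$, and its reverse to $\sup_{T\ge1}A-A(s/\rho)$, yields
\[
\lambda_*(K^0)\le\lambda_*(K)\le\lambda^*(K)\le\lambda^*(K^0).
\]
The same bounds together with dominated convergence show that if $\lambda_*(K^0)=\lambda^*(K^0)=c$, then $\Lambda^{K_\rho}\to c$ for every $K\in\mathcal K$. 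This settles the case $\widehat K=K^0$.

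\textbf{Step 3: the Tauberian case $\widehat K\in\mathcal K_W$.} This is the main obstacle. Pass to logarithmic variables $\rho=e^{-v}$, $s=e^{-u}$, and set $a(w):=A(e^{w})$, which is bounded and slowly oscillating on $\mathbb R$ (i.e.\ uniformly continuous at $+\infty$). Since $\widehat K$ is absolutely continuous, $\mu(ds)=-s\widehat K'(s)\,ds$, and so
\[
\Lambda^{\widehat K_{e^{-v}}}=\int_{\mathbb R}a(v-u)\,h(u)\,du,\qquad h(u):=-e^{-2u}\widehat K'(e^{-u}).
\]
Here $h\in L^1(\mathbb R)$ with $\int h=1$, and $\mathscr F h$ never vanishes by the definition of $\mathcal K_W$. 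Thus $h*a\to c$ as $v\to\infty$. By Wiener's Tauberian theorem, $g*a\to c\int g$ for every $g\in L^1(\mathbb R)$. To upgrade this to convergence of $a$ itself, use Pitt's form: choose $g=\epsilon^{-1}\mathbf 1_{[0,\epsilon]}$, and combine with the slow oscillation from Step~1 to get $|a(v)-(g*a)(v)|\le\sup_{|w|\le\epsilon}|a(v)-a(v-w)|$, which is small for large $v$ once $\epsilon$ is small. Hence $A(T)\to c$, i.e.\ $\lambda_*(K^0)=\lambda^*(K^0)$, and Step~2 then gives $\lambda_*=\lambda^*$ on all of $\mathcal K$.
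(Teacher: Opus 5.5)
Your proof follows the paper's route. The mixture representation $\Lambda^{K_\rho}=\int A(s/\rho)\,\mu(ds)$ behind Lemma~\ref{lemma:upper-sandwich} gives the sandwich inequalities, and your Fatou argument is a fine substitute for the paper's splitting at $\rho T_\varepsilon$. Wiener's Tauberian theorem combined with slow oscillation handles $\mathcal K_W$: the paper takes this from \cite[Ch.~II, Thms.~3.4 and 8.1]{Korevaar2004}, while you reprove the Pitt step directly via $g=\epsilon^{-1}\mathbf 1_{[0,\epsilon]}$.

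One step fails as written. In Step~3 you assert that $a(w)=A(e^w)$ is bounded on all of $\mathbb R$, but Step~1 only gives boundedness of $A$ on $[1,\infty)$. Membership $\lambda\in\mathbb L^1_{\mathrm{unif}}$ does not make $t\mapsto\mathbb E[\lambda_t]$ bounded near $0$, so $A(T)=\Lambda(T)/T$ can blow up as $T\downarrow0$. For example, $\lambda_t=t^{-1/2}\mathbf 1_{(0,1)}(t)$ gives $a(w)=2e^{-w/2}$ for $w\leq0$, and in general you only have $|a(w)|\leq 2Ce^{-w}$ there. Wiener's theorem genuinely needs $a\in L^\infty(\mathbb R)$. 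Boundedness is what makes the set of $g\in L^1$ with $g*a\to c\int g$ closed in $L^1$. Without it, $g*a$ need not even be finite for general $g\in L^1$; a $g$ with a polynomially decaying right tail already fails against $a(w)=2e^{-w/2}$.

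The repair is the one the paper makes: replace $\lambda$ by $\lambda\mathbf 1_{(1,\infty)}$. Since $K_\rho\leq\rho K(0)$, this changes every $\Lambda^{K_\rho}$ by at most $\rho K(0)\,\mathbb E\int_0^1|\lambda_s|\,ds\to0$. It therefore leaves $\lambda_*(K)$ and $\lambda^*(K)$ unchanged for all $K\in\mathcal K$, and afterwards $A\equiv0$ on $(0,1]$, so $a$ is bounded on $\mathbb R$. Equivalently, run Step~3 with $\tilde a:=a\mathbf 1_{[0,\infty)}$. In the logarithmic variables, $(h*a)(v)-(h*\tilde a)(v)$ is exactly the contribution of the region $s<\rho$. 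Your own Step~2 estimate shows this tends to $0$ as $v\to\infty$. Since $\tilde a$ is bounded and still slowly oscillating at $+\infty$, the Wiener and Pitt steps then go through verbatim.
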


The proof is reported in Appendix~\ref{app:averaging-kernels}.

\begin{remark}
Every bounded nonnegative progressively measurable process $\lambda$
can be realized in our setting: take $n=k=d=1$,
$A=-1$, $B=1$, $S=3$, $C=D=f=0$, and $g_t=\sqrt{\lambda_t}$.
These coefficients satisfy our assumptions, and
$V_t(x)=x^2$ gives the desired solution $(V,\lambda)$.
In particular, this construction realizes the configurations
discussed in \cite[Remark~2.4]{Lin2026} for the Ces\`aro and
exponential kernels. 
\end{remark}

\begin{definition}
\label{def:verification-controls}
Let $(V,\lambda)$ be a solution of the stochastic ergodic HJB equation
in the sense of Definition~\ref{def:ergodic_solution}. For $x\in\R^n$, define $F_t^uV_t(x):=\ell_t(x,u)+\mathcal L_t^uV_t(x)$ and
\[
\mathcal U_{\mathrm{ver}}^{V,\lambda}(x)
:=
\Big\{
\nu\in\mathcal U:
\lim_{T\to\infty}\frac1T\mathbb E\int_0^T
\left[F_t^{\nu_t}V_t(X_t^{x;\nu})-\lambda_t\right]dt=0
\Big\}.
\]
\end{definition}

\begin{theorem}
\label{thm:general-decreasing-kernels}
Let Assumptions~\ref{ass:bounded-coefficients} and
\ref{ass:uniform-stabilizability} hold.
Then, for $K\in\mathcal K$ and $x\in\mathbb R^n$,
\begin{align}
\liminf_{\rho\downarrow0}\inf_{\nu\in\mathcal U}J^{K_\rho}(x;\nu)
&=
\inf_{\nu\in\mathcal U}\liminf_{\rho\downarrow0}J^{K_\rho}(x;\nu)
=
\lambda_*(K),
\label{eq:kernel-lower-value}\\
\limsup_{\rho\downarrow0}\inf_{\nu\in\mathcal U}J^{K_\rho}(x;\nu)
&=
\inf_{\nu\in\mathcal U}\limsup_{\rho\downarrow0}J^{K_\rho}(x;\nu)
=
\lambda^*(K).
\label{eq:kernel-upper-value}
\end{align}
Moreover: \textup{(i)} For any solution $(V,\lambda)$ in the sense of
Definition~\ref{def:ergodic_solution},
$\mathcal U_{\mathrm{ver}}^{V,\lambda}(x)\neq\emptyset$,
and every $\nu\in\mathcal U_{\mathrm{ver}}^{V,\lambda}(x)$
attains the infima of the lower and upper ergodic costs in
\eqref{eq:kernel-lower-value} and \eqref{eq:kernel-upper-value}
for the initial state $x$.

\textup{(ii)} Define the affine feedback map
$
u_t^*(x):=-N_t^{-1}(L_tx+\eta_t).
$
For every $x\in\mathbb R^n$, the corresponding closed-loop
equation with initial condition $X_0^{x,*}=x$ admits a unique
global strong solution $X^{x,*}$, and the induced control
$\nu_t^{x,*}:=u_t^*(X_t^{x,*})$, $t\geq0$, belongs to
$\mathcal U_{\mathrm{ver}}^{V^0,\lambda^0}(x)$.
Consequently, for every $K\in\mathcal K$, this control attains
the infima of the lower and upper ergodic costs in
\eqref{eq:kernel-lower-value} and \eqref{eq:kernel-upper-value}.
\end{theorem}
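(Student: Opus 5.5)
The plan is a standard verification argument: the It\^o--Wentzell formula applied to $V_t(X_t^{x;\nu})$, combined with the kernel integration by parts used to define $\Lambda^K$. Since $\lambda_*(K),\lambda^*(K)$ do not depend on the chosen solution, I work with a general solution $(V,\lambda)$ in the sense of Definition~\ref{def:ergodic_solution}.

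\emph{Lower bound.} Fix $\nu\in\mathcal U$ with $\liminf_{\rho\downarrow0}J^{K_\rho}(x;\nu)<\infty$; otherwise there is nothing to prove. By It\^o--Wentzell and the HJB inequality $F_t^{\nu_t}V_t\ge\lambda_t$, after localization and using the lower bound $V\ge -C$ of \eqref{eq:growth}, I get
\[
\mathbb E\int_0^T\ell_t(X_t,\nu_t)\,dt\ \ge\ \mathbb E\int_0^T\lambda_t\,dt+\mathbb E V_T(X_T)-V_0(x)\ \ge\ \Lambda(T)-C-V_0(x).
\]
Writing $J^{K_\rho}=-\int_{(0,\infty)}\mathbb E\int_0^t\ell\,ds\,dK_\rho(t)$, where $-dK_\rho$ is a nonnegative measure of total mass $K_\rho(0)=\rho K(0)$, I obtain $J^{K_\rho}(x;\nu)\ge\Lambda^{K_\rho}-\rho K(0)(C+V_0(x))$. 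Taking $\liminf$ or $\limsup$ in $\rho$, and then the infimum over $\nu$ in either order, gives ``$\ge\lambda_*(K)$'' and ``$\ge\lambda^*(K)$'' for all four quantities in \eqref{eq:kernel-lower-value}--\eqref{eq:kernel-upper-value}. This step requires that $\inf_\nu$ of the $\rho$-limit dominate the $\rho$-limit of $\inf_\nu$ from below, which is immediate because the bound is uniform in $\nu$.

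\emph{Upper bound and (i).} For $\nu\in\mathcal U_{\rm ver}^{V,\lambda}(x)$, the same identity gives
\[
\mathbb E\int_0^T\ell=\Lambda(T)+\mathbb E\int_0^T(F^{\nu}V-\lambda)\,dt+V_0(x)-\mathbb E V_T(X_T).
\]
The middle term is $o(T)$ by definition. I must show that $\mathbb E V_T(X_T)=o(T)$ along the relevant sequence, or at least that it enters the kernel averages with vanishing weight. By \eqref{eq:growth} it suffices to control $\mathbb E|X_T|^2$. Here the coercivity $S\ge\delta I$ helps: $\delta\,\mathbb E\int_0^T|X_t|^2\,dt\le\mathbb E\int_0^T\ell$, which is itself $\Lambda(T)+o(T)+O(1)+C$. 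Since $\Lambda(T)=O(T)$ (because $\lambda\in\mathbb L^1_{\rm unif}$), we get $\int_0^T\mathbb E|X_t|^2\,dt=O(T)$. Integrating against $-dK_\rho$, the term $\int\mathbb E V_t(X_t)\,(-dK_\rho)(t)$ is bounded by $C\rho K(0)+C\int K_\rho'\ldots$. This is the delicate point. I would handle it by integrating by parts once more, bounding $\int \mathbb E|X_t|^2(-dK_\rho(t))$ through the averaged bound, and, where needed, by replacing $\ell\ge\delta|X|^2$ with an argument that only averaged quantities appear. Specifically, $\mathbb E\int_0^\infty K_\rho\,\ell\,dt$ already controls $\int K_\rho\,\mathbb E|X|^2$, and a Gronwall-type estimate $\mathbb E|X_T|^2\le C\bigl(1+\int_{T-1}^T\mathbb E(|X|^2+|\nu|^2)\bigr)$, which follows from the linear SDE with bounded coefficients, converts pointwise terms into windowed averages of $\ell$. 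Absorbing these gives $J^{K_\rho}(x;\nu)=\Lambda^{K_\rho}+o(1)$, so $\nu$ attains both infima.

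\emph{Existence and (ii).} Nonemptiness of $\mathcal U_{\rm ver}^{V,\lambda}(x)$ for an arbitrary solution reduces to (ii), combined with Theorem~\ref{thm:existence_ergodic_hjb}(ii): once $\nu^{x,*}$ is optimal, I compare $V$ with $V^0$. For the closed loop, existence of $X^{x,*}$ is a linear SDE with coefficients $H,M^i$ involving $Q$, which lies only in $\mathbb L^2_{\rm unif}$; I would solve it by the linear SDE theory with $\mathbb H^2_{\rm loc}$ coefficients, for instance via localization. Equation \eqref{eq:completion-of-squares} gives $F^{\nu^*}V^0-\lambda^0=0$, so the verification condition holds trivially once $\nu^*\in\mathcal U$. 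The latter follows from $N\ge I$ and from finiteness of $\mathbb E\int_0^T\ell$, which comes out of the identity with the stopping localization and $V^0\ge-C$. For a general $(V,\lambda)$, I would show that $\nu^*$ satisfies $\frac1T\mathbb E\int_0^T(F^{\nu^*}V-\lambda)\to0$ by the identity applied to $V$, using Theorem~\ref{thm:existence_ergodic_hjb}(ii) to compare $\int\lambda$ with $\int\lambda^0$ and the growth bounds on $V$ to control $\mathbb E V_T(X_T^*)/T$. This last control, namely the $o(T)$ bound on $\mathbb E|X_T|^2$ for the closed loop, is the main obstacle; I would attempt it through uniform stabilizability and the estimate $\mathbb E_t\int_t^\infty|\overline X|^2\le M|x|^2/\delta$.
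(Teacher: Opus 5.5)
There is a genuine gap in your lower bound. The It\^o--Wentzell inequality is used with the wrong sign. From $F_t^{\nu_t}V_t\ge\lambda_t$ one gets $\mathbb E V_T(X_T)-V_0(x)=\mathbb E\int_0^T\mathcal L_t^{\nu_t}V_t(X_t)\,dt\ge\Lambda(T)-J^T(x;\nu)$, that is,
\[
J^T(x;\nu)-\Lambda(T)\;\ge\;V_0(x)-\mathbb E V_T(X_T^{x;\nu}).
\]
Your own identity in the upper-bound paragraph gives the same thing once the nonnegative term $\mathbb E\int_0^T(F^{\nu}V-\lambda)\,dt$ is dropped. Your version already fails for $dX=\nu\,dt+dW$, $\ell=x^2+u^2$, $(V,\lambda)=(x^2,1)$, $\nu=-X$, $x=0$: there $J^T=T-\frac12(1-e^{-2T})<T+\mathbb E X_T^2$.

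So a lower bound on $J^T-\Lambda(T)$ needs an \emph{upper} bound on $\mathbb E V_T(X_T^{x;\nu})$, i.e.\ control of $\mathbb E|X_T^{x;\nu}|^2$ for an arbitrary $\nu\in\mathcal U$. The bound $V\ge-C$ is useless here. Note also that nothing in your lower bound uses $S\ge\delta I_n$, which is exactly what makes verification work on all of $\mathcal U$.

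The missing idea is a Gronwall argument on $G^\nu(T):=J^T(x;\nu)-\Lambda(T)$ itself. The upper growth bound in \eqref{eq:growth} gives $G^\nu(T)\ge -C_x-C\,\mathbb E|X_T^{x;\nu}|^2$, and coercivity gives $(G^\nu)'(t)\ge\delta\,\mathbb E|X_t^{x;\nu}|^2-\mathbb E\lambda_t$. Combining, $(G^\nu)'+\frac{\delta}{C}G^\nu\ge -C_x-\mathbb E|\lambda_t|$. Integrating with $\lambda\in\mathbb L^1_{\mathrm{unif}}(\mathbb R)$ yields $G^\nu(T)\ge -C_x$ uniformly in $T$ and $\nu$. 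Only then does your kernel integration give $J^{K_\rho}(x;\nu)\ge\Lambda^{K_\rho}-C_x\rho K(0)$.

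The windowed estimate $\mathbb E|X_T|^2\le C\bigl(1+\int_{T-1}^T\mathbb E(|X|^2+|\nu|^2)\bigr)$ from your upper-bound paragraph cannot replace this. For the Ces\`aro kernel the error term is $\rho\,\mathbb E|X_{1/\rho}|^2$, and that estimate bounds it only by $C\rho+C'J^{K_\rho}(x;\nu)$, not by $o(1)$.

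The sign slip also makes you misplace the difficulty. For $\nu\in\mathcal U_{\mathrm{ver}}^{V,\lambda}(x)$ the upper bound is the easy side: $G^\nu(T)=R^\nu(T)+V_0(x)-\mathbb EV_T(X_T^\nu)\le C_x+R^\nu(T)$ follows from $V\ge -C$ alone. After that you only need the Abelian step from $R^\nu(T)/T\to0$ to $\int_0^\infty K_\rho r^\nu\to0$, which is Lemma~\ref{lemma:upper-sandwich}. No $o(T)$ control of $\mathbb E|X_T|^2$ is required.

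For nonemptiness of $\mathcal U_{\mathrm{ver}}^{V,\lambda}(x)$, your route differs from the paper's. You test $\nu^{x,*}$ against a general $(V,\lambda)$ via Theorem~\ref{thm:existence_ergodic_hjb}(ii), whereas the paper builds $\varepsilon$-optimal locally Lipschitz feedbacks for each solution (Lemma~\ref{lem:existence-verification-controls}). Your route does work. However, in the paper Theorem~\ref{thm:existence_ergodic_hjb}(ii) is itself proved from Lemma~\ref{lem:existence-verification-controls} and the conditional version of the very lower bound you are missing.

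The bound you flag as the main obstacle does not come from the homogeneous estimate, which does not apply to the inhomogeneous closed loop. It comes from a Lyapunov argument with the coercive $V^0$. Set $y(s):=\mathbb E V_s^0(X_s^{x,*})$; then $y'(s)=\mathbb E[\lambda_s^0-\ell_s(X_s^{x,*},\nu_s^{x,*})]\le\mathbb E|\lambda_s^0|+C-\beta y(s)$. Since $P\ge\delta_PI_n$, this gives $\sup_s\mathbb E|X_s^{x,*}|^2<\infty$.
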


The proof is reported in Section~\ref{sec:proof_veri}.

\begin{remark}\label{rem:verification-lower-bound}
The uniform lower bound in \eqref{eq:growth} is essential for
characterizing the optimal ergodic costs through $\lambda$.
Indeed, consider the example $dX_t=\nu_t\,dt+dW_t$ and
$\ell(x,u)=x^2+u^2$.
Both $(V^0,\lambda^0)=(x^2,1)$ and
$(V,\lambda)=(-x^2,-1)$
satisfy all the requirements of Definition~\ref{def:ergodic_solution}
other than the uniform lower bound.
The two distinct constants $1$ and $-1$ cannot both represent
the optimal ergodic cost.
In fact, $(V^0,\lambda^0)=(x^2,1)$ satisfies \eqref{eq:growth},
and the optimal ergodic cost is $1$, whereas $V(x)=-x^2$ is not uniformly bounded below.
\end{remark}

\section{Proof of the verification theorem}\label{sec:proof_veri}

In this section, we prove Theorem~\ref{thm:general-decreasing-kernels}.
In particular, in view of our problem formulation using strong solutions of the controlled state SDE, we construct locally Lipschitz approximately optimal feedback maps and we show strong wellposedness of the
corresponding closed-loop equations.

\begin{lemma}
\label{lem:existence-verification-controls}
Let Assumptions~\ref{ass:bounded-coefficients} and
\ref{ass:uniform-stabilizability} hold, and let $(V,\lambda)$ be a
solution of the stochastic ergodic HJB equation.
For every $t\geq0$ and $\varepsilon>0$, there exists a
progressively measurable feedback map $u^{\varepsilon,t}$ such that,
for every $x\in\mathbb R^n$, the corresponding closed-loop equation
admits a unique global strong solution $X^{\varepsilon,t,x}$ satisfying:
\begin{equation}\label{eq:verification-control-bound}
\mathbb E_t\int_t^\infty
\left[F_s^{\nu_s^{\varepsilon,t,x}}V_s(X_s^{\varepsilon,t,x})-\lambda_s\right]ds
\leq\varepsilon,
\,\mathbb P\text{-a.s. and}~
\nu^{\varepsilon,t,x}:=u^{\varepsilon,t}(X^{\varepsilon,t,x})
\in\mathcal U.
\end{equation}
\end{lemma}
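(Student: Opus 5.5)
The idea is to perturb the explicit feedback $u^*$ of Theorem~\ref{thm:general-decreasing-kernels}(ii), built from the quadratic solution $(V^0,\lambda^0)$, toward a near-minimizer of $u\mapsto F^u_sV_s(x)$. The feedback $u^*$ gives the stability and integrability. The perturbation gives the $\varepsilon$-optimality for the given $(V,\lambda)$.

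\emph{Step 1 (structure of the Hamiltonian).} For fixed $(\omega,s,x)$ the map $g_s(x,u):=F^u_sV_s(x)-\lambda_s$ is a quadratic polynomial in $u$. Its Hessian is $2G_s(x)$, where $G_s(x):=I_k+\tfrac12\sum_i(D^i_s)^\top D^2_xV_s(x)D^i_s$. Its linear part collects $B_s^\top D_xV_s$, the terms $D_xZ^i_s\cdot D^i_su$, and the cross terms of $D^2_xV_s$ with $C^ix+g^i$. Since $\inf_u g_s(x,u)=0$ is finite by \eqref{eq:stochastic-ergodic-hjb}, we get $G_s(x)\ge0$, the linear part lies in the range of $G_s(x)$, and $g\ge0$. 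For $\theta>0$ set
\[
m^\theta_s(x):=\inf_{u}\bigl\{g_s(x,u)+\theta|u-u^*_s(x)|^2\bigr\},
\]
and let $\hat u^\theta_s(x)$ be its unique minimizer, which is explicit because $G_s+\theta I$ is invertible. Since $g\ge0$, we get
\[
0\le g_s(x,\hat u^\theta_s(x))\le m^\theta_s(x).
\]
Moreover $m^\theta_s(x)\downarrow0$ as $\theta\downarrow0$, pointwise in $x$. The functions $m^\theta_s$ are continuous in $x$ (the coefficients $D_xV,D^2_xV,D_xZ$ are continuous), so Dini's theorem gives $m^\theta_s\to0$ uniformly on balls $\{|x|\le R\}$. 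We also have $|\hat u^\theta_s(x)-u^*_s(x)|^2\le\theta^{-1}g_s(x,u^*_s(x))$, which keeps $\hat u^\theta$ controlled by $u^*$ plus the (locally integrable) data of $V$.

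\emph{Step 2 (schedule and regularization).} Fix $t$ and $\varepsilon$, and partition $[t,\infty)$ into unit blocks $I_j$. Choose progressively measurable radii $R_j\uparrow\infty$ and weights $\theta_s$, constant on each block and random if necessary. The requirement is
\[
\mathbb E_t\int_{I_j}\sup_{|x|\le R_j}m^{\theta_s}_s(x)\,ds\le\varepsilon2^{-j-2}.
\]
This uses the integrability properties of $V\in\mathcal C^2_{\rm sm}$ and $\lambda\in\mathbb L^1_{\mathrm{unif}}$, together with dominated convergence in $\theta$. The feedback $\hat u^\theta$ is only continuous in $x$, because $D_xZ$ is merely continuous. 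I therefore replace it by a spatial mollification $\tilde u_s$ that is locally Lipschitz, with the error on $\{|x|\le R_j\}$ absorbed into the same budget by uniform continuity on compacts. Outside the ball of radius $R_j+1$ I glue $\tilde u$ to $u^*$ through a smooth cutoff. The resulting map $u^{\varepsilon,t}$ is locally Lipschitz in $x$, coincides with the affine feedback $u^*$ for large $|x|$, and has linear growth. The closed-loop SDE therefore has a unique global strong solution, by standard local-Lipschitz and linear-growth arguments.

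\emph{Step 3 (error accounting and admissibility).} On $\{|X_s|\le R_j\}$ the integrand of \eqref{eq:verification-control-bound} is at most the block budget. On the complement, the control is $u^*$ or a convex-type combination with it. There I bound $g$ by quadratic growth and use moment estimates for $X^{\varepsilon,t,x}$, which are inherited from the stability of the $u^*$ closed loop (via Assumption~\ref{ass:uniform-stabilizability} and $P\ge\delta_PI_n$). Chebyshev's inequality then makes $\mathbb E_t\int_{I_j}g\,\mathbf 1_{\{|X|>R_j\}}$ small once $R_j$ is chosen large after the moments are fixed. Summing over $j$ gives the bound $\varepsilon$, and the same moment bounds give $\nu^{\varepsilon,t,x}\in\mathbb H^2_\infty$.

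\emph{Main obstacle.} The main difficulty is the circularity in Step 3. The moment bounds on $X$ depend on the feedback, while the radii $R_j$ are chosen using those bounds. I would break it by proving an a priori second-moment bound that holds uniformly over all feedbacks that agree with $u^*$ outside a ball and are bounded inside it. Such a bound would follow from a Lyapunov argument with $V^0$: the square in \eqref{eq:completion-of-squares} is bounded on the ball, and the coercivity $S\ge\delta I_n$ controls $\mathbb E|X|^2$. With that bound in hand, I would fix the $R_j$ first and then the $\theta_s$.
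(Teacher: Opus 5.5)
\textbf{Verdict: there is a genuine gap, in Step~3 on the region outside the ball.}

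Your Step~1 is sound. It mirrors Step~1 of the paper's Lemma~\ref{lem:blockwise-integrated-lipschitz-selector} (regularized minimizer plus Dini), except that you regularize toward $u^*$ rather than toward $0$.

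The gap is Step~3 on the set where $|X_s|$ exceeds the radius. There the integrand is $g_s(X_s,u^*_s(X_s))$, or a convex combination involving it. By \eqref{eq:completion-of-squares},
$g_s(x,u^*_s(x))=\mathcal L_s^{u^*_s(x)}(V_s-V^0_s)(x)-(\lambda_s-\lambda^0_s)$.
This involves $a$, $D_x^2V$ and $D_xZ^i$ of an \emph{arbitrary} solution. Definition~\ref{def:ergodic_solution} bounds only $V$ itself, through \eqref{eq:growth}, and $\mathcal C^2_{\mathrm{sm}}$ only gives integrability of suprema over compact balls. Consequences:
\begin{itemize}
\item $g$ has no quadratic growth in $x$ to combine with Chebyshev.
\item $\sup_{|x|>R}g_s(x,u^*_s(x))$ may be infinite.
\item Relating $(V,\lambda)$ to $(V^0,\lambda^0)$ is exactly what Theorem~\ref{thm:existence_ergodic_hjb}(ii)--(iii) later derive \emph{from} this lemma, so you cannot assume it here.
\end{itemize}

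Your proposed escape from the circularity does not close either. The Lyapunov argument with $V^0$ needs $\mathbb E_t\int_{I_j}|N_s^{1/2}(\nu_s-u^*_s(X_s))|^2\mathbf 1_{\{|X_s|\le R_j+1\}}\,ds$ bounded uniformly in $j$. But $u^*$ contains $Q$ and $k$, which are only in $\mathbb L^2_{\mathrm{unif}}$. Your bound $|\hat u^\theta-u^*|^2\le\theta^{-1}g(x,u^*)$ also deteriorates as $\theta\downarrow0$, and $\theta$ is chosen after $R_j$.

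Separately, $\theta_s$ and the mollification radii are chosen pathwise. So the Lipschitz constant of your feedback need not be locally bounded, or even locally square integrable, in time. The ``standard'' well-posedness of the closed loop is therefore not guaranteed.

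\textbf{The missing idea.} Make the near-optimality bound uniform in $x$ over all of $\R^n$, so that it holds for \emph{every} progressively measurable $Y$, as in \eqref{eq:approximate-feedback-selection}. Then no moment bound on $X$ is needed before the feedback is built, and the circularity disappears. The paper does this as follows, with $G_s(x,u)=F^u_sV_s(x)-\lambda_s$ (your $g$):
\begin{itemize}
\item It uses a locally finite smooth partition of unity $(\chi_m)$ indexed by $\mathbb Z^n$.
\item On each block $I_n=[n,n+1)$ and patch $A_m$, it selects pathwise the first element of a countable dense family of smooth maps whose error is below $\delta_n=\varepsilon2^{-n-2}$.
\item It truncates that index at a level $K_{n,m}$ and falls back to $u=0$ beyond it.
\end{itemize}
The fallback cost $c_s(x)=G_s(x,0)$ has integrable supremum on compacts, by the $\mathcal C^2_{\mathrm{sm}}$ conditions and $\lambda\in\mathbb L^1_{\mathrm{unif}}$. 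Convexity in $u$ then gives
$\sup_xG_s(x,u_s(x))\le\delta_n+\sum_mH_m(s)\mathbf 1_{\{J_{n,m}(s)>K_{n,m}\}}$,
with $H_m(s)=\sup_{A_m}c_s$ and summable budgets $\varepsilon2^{-n-m-2}$. The truncation also gives locally bounded Lipschitz constants, since only finitely many smooth candidates act on each $[t,t+T]\times\overline B_R$.

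Non-explosion and $\nu\in\mathcal U$ then follow from It\^o--Wentzell applied to $V$ itself. This uses only $V\geq-C$, $S\geq\delta I_n$ and $\lambda\in\mathbb L^1_{\mathrm{unif}}$; neither $u^*$ nor $V^0$ is needed.
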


\begin{proof}
Fix $t\geq0$ and $\varepsilon>0$, and set
$G_s(x,u):=F_s^uV_s(x)-\lambda_s$.
By \eqref{eq:stochastic-ergodic-hjb},
$\inf_{u\in\mathbb R^k}G_s(x,u)=0$.
Since $V\in\mathcal C_{\mathrm{sm}}^2$,
$G$ admits the representation
$
G_s(x,u)=u^\top M_s(x)u+2b_s(x)\cdot u+c_s(x),
$
where $M,b,c$ can be expressed in terms of the model
coefficients, $\lambda$, and the semimartingale decomposition
and spatial derivatives of $V$.
Together with the integrability assumptions on $(V,\lambda)$,
this allows us to apply
Lemma~\ref{lem:blockwise-integrated-lipschitz-selector},
which yields a progressively measurable feedback map
$u^{\varepsilon,t}$, locally bounded and Lipschitz in $x$,
such that, for every progressively measurable
$\mathbb R^n$-valued process $Y$,
\begin{equation}\label{eq:approximate-feedback-selection}
0\leq\mathbb E_t\int_t^\infty
\left[F_s^{u_s^{\varepsilon,t}(Y_s)}V_s(Y_s)-\lambda_s\right]ds
\leq\varepsilon. 
\end{equation}

Fix $x\in\mathbb R^n$, and let $X^{\varepsilon,t,x}$ be the corresponding
maximal closed-loop solution with explosion time $\zeta$.
Define $\nu^{\varepsilon,t,x}$ as in the statement and set
$\tau_N:=\inf\{s\geq t:|X_s^{\varepsilon,t,x}|\geq N\}$.
By \eqref{eq:approximate-feedback-selection} and the nonnegativity of $G$,
for every $T>t$,
\begin{equation}\label{eq:stopped-feedback-selection}
\mathbb E_t\int_t^{T\wedge\tau_N}
\left[F_s^{\nu_s^{\varepsilon,t,x}}V_s(X_s^{\varepsilon,t,x})-\lambda_s\right]ds
\leq\varepsilon.
\end{equation}

By the It\^o--Wentzell formula, the coercivity of $S$,
\eqref{eq:growth}, and \eqref{eq:stopped-feedback-selection}, we obtain,
\begin{align}
\mathbb E_t\int_t^{T\wedge\tau_N}
\left(|X_s^{\varepsilon,t,x}|^2+|\nu_s^{\varepsilon,t,x}|^2\right)ds
&\leq C\mathbb E_t\int_t^{T\wedge\tau_N}
\ell_s(X_s^{\varepsilon,t,x},\nu_s^{\varepsilon,t,x})\,ds
\nonumber\\
&\hspace{-25mm}=C\left[
V_t(x)-\mathbb E_tV_{T\wedge\tau_N}
(X_{T\wedge\tau_N}^{\varepsilon,t,x})
+\mathbb E_t\int_t^{T\wedge\tau_N}
\!\!\!\!F_s^{\nu_s^{\varepsilon,t,x}}V_s(X_s^{\varepsilon,t,x})\,ds
\right]
\label{eq:stopped-state-control-bound}\\
&\hspace{-25mm}
\leq C\left[
V_t(x)+C+\mathbb E_t\int_t^T|\lambda_s|\,ds+\varepsilon
\right]
\leq C_{T,x,\varepsilon},
\nonumber
\end{align}
uniformly in $N$, where the last inequality follows from
$\lambda\in\mathbb L^1_{\mathrm{unif}}(\mathbb R)$ and \eqref{eq:growth}.

By the boundedness of the coefficients, the
Burkholder--Davis--Gundy inequality, and
\eqref{eq:stopped-state-control-bound}, we obtain
$\sup_{N\geq1}\mathbb E_t\sup_{t\leq s\leq T\wedge\tau_N}|X_s^{\varepsilon,t,x}|^2\leq C_{T,x,\varepsilon}$.
Consequently, by the conditional Markov inequality,
$\mathbb E_t\mathbf 1_{\{\tau_N\leq T\}}\leq C_{T,x,\varepsilon}/N^2$.
Letting $N\to\infty$ gives $\mathbb E_t\mathbf 1_{\{\zeta\leq T\}}=0$.
Since $T>t$ is arbitrary, the closed-loop solution is global.

Letting $N\to\infty$ in \eqref{eq:stopped-state-control-bound}
and using Fatou's lemma shows that $\nu^{\varepsilon,t,x}\in\mathcal U$.
Finally, applying \eqref{eq:approximate-feedback-selection} to
$Y=X^{\varepsilon,t,x}$ yields \eqref{eq:verification-control-bound}.
\end{proof}

\begin{proof}[Proof of Theorem~\ref{thm:general-decreasing-kernels}]
Let $(V,\lambda)$ be any solution in the sense of
Definition~\ref{def:ergodic_solution}.
Fix $K$ and $x$.
By Lemma~\ref{lem:existence-verification-controls},
$\mathcal U_{\mathrm{ver}}^{V,\lambda}(x)\neq\emptyset$.
Set
$\Lambda(T):=\mathbb E\int_0^T\lambda_s\,ds$ and
$G^\nu(T):=J^T(x;\nu)-\Lambda(T)$. Throughout the proof, $C_x$ denotes a generic finite constant depending
on the fixed initial state $x$, whose value may change from one
occurrence to another. The arguments below are understood
after localization whenever needed.

For every $\nu\in\mathcal U$, \eqref{eq:stochastic-ergodic-hjb} and
\eqref{eq:growth}, together with the It\^o--Wentzell formula, yield
\[
G^\nu(T)
\geq
\mathbb EV_0(x)-\mathbb EV_T(X_T^\nu)
\geq
-C_x-C\mathbb E|X_T^\nu|^2,
\]
where $C$ is the constant in \eqref{eq:growth}. Moreover, $G^\nu$ is absolutely continuous with
\[
(G^\nu)'(t)
=\mathbb E\left[S_tX_t^\nu\cdot X_t^\nu+|\nu_t|^2-\lambda_t\right]
\geq\mathbb E\left[\delta|X_t^\nu|^2-\lambda_t\right],
\quad dt\text{-a.e.},
\]
since $S_t\geq\delta I_n$, $d\mathbb P\otimes dt$-a.e. Combining the preceding
two inequalities, we obtain
$
(G^\nu)'(t)+\frac{\delta}{C}G^\nu(t)
\geq -C_x-\mathbb E|\lambda_t|.
$
Since $\lambda\in\mathbb L^1_{\mathrm{unif}}(\mathbb R)$ and
$G^\nu(0)=0$, integrating this differential inequality yields
\begin{equation}\label{eq:finite-lower-bound}
G^\nu(T)\geq
-\int_0^T e^{-\frac{\delta}{C}(T-s)}
\left[C_x+\mathbb E|\lambda_s|\right]ds
\geq-C_x,
\qquad \nu\in\mathcal U,\quad T\geq0.
\end{equation}

Now let $\nu\in\mathcal U_{\mathrm{ver}}^{V,\lambda}(x)$ and set
$r^\nu(s):=\mathbb E[F_s^{\nu_s}V_s(X_s^\nu)-\lambda_s], R^\nu(T):=\int_0^T r^\nu(s)\,ds$.
Then $r^\nu\geq0$ and $R^\nu(T)/T\to0$.
By the It\^o--Wentzell formula and the lower bound in \eqref{eq:growth}, we see that
\begin{equation}\label{eq:finite-verification-upper-bound}
G^\nu(T)=\mathbb EV_0(x)-\mathbb EV_T(X_T^\nu)+R^\nu(T)
\leq C_x+R^\nu(T).
\end{equation}
Since $G^\nu(0)=0$ and $K_\rho$ is nonincreasing,
integration by parts and \eqref{eq:finite-lower-bound} provide
\begin{align}
\int_0^T K_\rho(s)\,dG^\nu(s)
&=
K_\rho(T)G^\nu(T)
-\int_{(0,T]}G^\nu(s)\,dK_\rho(s)
\label{eq:kernel-integration-by-parts}\\
&\geq
-C_xK_\rho(T)
-C_x\bigl(K_\rho(0)-K_\rho(T)\bigr)
=-C_x\rho K(0),
~T>0.
\notag
\end{align}
Letting $T\to\infty$ yields, for every $\nu\in\mathcal U$,
\begin{equation}\label{eq:kernel-universal-lower-bound}
J^{K_\rho}(x;\nu)-\Lambda^{K_\rho}
\geq
-C_x\rho K(0).
\end{equation}
On the other hand, combining \eqref{eq:finite-verification-upper-bound} and
\eqref{eq:kernel-integration-by-parts} gives
\begin{equation}\label{eq:kernel-verification-upper-bound-finite}
\int_0^T K_\rho(s)\,dG^\nu(s)
\leq
K_\rho(T)G^\nu(T)
+C_x\bigl(K_\rho(0)-K_\rho(T)\bigr)
-\int_{(0,T]}R^\nu(s)\,dK_\rho(s).
\end{equation}
Since $K_\rho\in\mathcal K$ is nonincreasing and integrable,
$\lim_{T\to\infty}TK_\rho(T)=0$ for every $\rho>0$. Moreover,
\eqref{eq:finite-lower-bound}--\eqref{eq:finite-verification-upper-bound}
and $R^\nu(T)/T\to0$ yield $G^\nu(T)/T\to0$. Hence
$K_\rho(T)G^\nu(T)\to0$. Letting $T\to\infty$ in
\eqref{eq:kernel-verification-upper-bound-finite} therefore yields
\[
J^{K_\rho}(x;\nu)-\Lambda^{K_\rho}
\leq
C_x\rho K(0)
-\int_{(0,\infty)}R^\nu(s)\,dK_\rho(s)
=
C_x\rho K(0)
+\int_0^\infty K_\rho(s)r^\nu(s)\,ds.
\]
Since $r^\nu\geq0$ and $R^\nu(T)/T\to0$,
Lemma~\ref{lemma:upper-sandwich} gives
$\int_0^\infty K_\rho(s)r^\nu(s)\,ds\to0$.
Together with \eqref{eq:kernel-universal-lower-bound}, this yields
$J^{K_\rho}(x;\nu)=\Lambda^{K_\rho}+o(1)$ for all
$\nu\in\mathcal U_{\mathrm{ver}}^{V,\lambda}(x)$.
Moreover, by \eqref{eq:kernel-universal-lower-bound},
\[
\liminf_{\rho\downarrow0}J^{K_\rho}(x;\widetilde\nu)
\geq\lambda_*(K),
\qquad
\limsup_{\rho\downarrow0}J^{K_\rho}(x;\widetilde\nu)
\geq\lambda^*(K).
\]
For all $\nu\in\mathcal U_{\mathrm{ver}}^{V,\lambda}(x)$, we have
$
\Lambda^{K_\rho}-C_x\rho K(0)
\leq\inf_{\widetilde\nu\in\mathcal U}
J^{K_\rho}(x;\widetilde\nu)
\leq J^{K_\rho}(x;\nu)
=\Lambda^{K_\rho}+o(1).
$
Taking lower and upper limits proves
\eqref{eq:kernel-lower-value}--\eqref{eq:kernel-upper-value}
and the optimality assertion in \textup{(i)}.

For \textup{(ii)}, let $(V^0,\lambda^0)$ be defined by
\eqref{eq:quadratic-ergodic-value-field}.
Substituting $u_t^*(x)$ into \eqref{eq:completion-of-squares} leads to
\begin{equation}\label{eq:affine-feedback-minimizer}
\ell_t(x,u_t^*(x))+\mathcal L_t^{u_t^*(x)}V_t^0(x)=\lambda_t^0,
\quad\text{for }x\in\mathbb R^n,
\quad d\mathbb P\otimes dt\text{-a.e.}
\end{equation}
The estimates in
Proposition~\ref{prop:infinite-horizon-riccati} \textup{(i)--(ii)}
ensure the local integrability of the drift coefficients and
the local square integrability of the diffusion coefficients.
Since the closed-loop equation is linear, it admits a unique
global strong solution $X^{x,*}$.

By \eqref{eq:affine-feedback-minimizer}, the proof of
Lemma~\ref{lem:existence-verification-controls} applies with
$\varepsilon=0$ and yields $\nu^{x,*}\in\mathcal U$.
Moreover, \eqref{eq:affine-feedback-minimizer} implies
$\nu^{x,*}\in\mathcal U_{\mathrm{ver}}^{V^0,\lambda^0}(x)$.
Part \textup{(i)} therefore shows
that $\nu^{x,*}$ attains the infima in \eqref{eq:kernel-lower-value}
and \eqref{eq:kernel-upper-value}
for every $K\in\mathcal K$.
\end{proof}

\section{Well-posedness of the LQ stochastic ergodic HJB equation}
\label{sec:finite-horizon}

In this section, we first recall the well-posedness results of
\cite{KohlmannTang2003,Tang2003} for the finite-horizon backward
stochastic Riccati equation and the associated dual BSDE.
We then prove Proposition~\ref{prop:infinite-horizon-riccati},
building on the existence results and methods of
\cite{GuatteriMasiero2010} to establish well-posedness of the
corresponding infinite-horizon equations, together with uniform
coercivity of the Riccati solution and uniform local $L^2$
estimates for the martingale integrands. We use the notation $N$, $L$, $\Gamma$, $H$, $M^i$, $\varphi$,
$\eta$, $\mathcal R$, and $\mathcal G$ introduced in
\eqref{eq:LQ-coefficient-maps}.

\subsection{Finite-horizon value field and equations}\label{sec:finite_horizon}

We first recall from \cite{KohlmannTang2003,Tang2003}
the quadratic--affine representation of the value field associated
with the finite-horizon LQ control problem. For later
use, we allow a quadratic terminal cost 
$G_T\in L^\infty(\Omega,\mathcal F_T;\mathcal S_+^n)$, and we define for $ 0\le t\le T, x\in\mathbb R^n:$
\begin{equation*}
J_t^{T,G}(x;\nu)
:=J_t^T(x;\nu)+\mathbb E_t\left[G_TX_T^{t,x;\nu}\cdot X_T^{t,x;\nu}\right],
V_t^{T,G}(x)
:=\operatorname*{ess\,inf}_{\nu\in\mathcal U}J_t^{T,G}(x;\nu).
\end{equation*}
The terminal-cost-free problem introduced in
\eqref{eq:horizon-cost} is recovered by taking $G_T=0$,
i.e., $V_t^T(x):=V_t^{T,0}(x)$. Since the dynamics are affine and the running cost is quadratic, the LQ
structure naturally suggests the quadratic--affine ansatz
\begin{equation}\label{eq:finite-horizon-value-representation}
    V_t^{T,G}(x)
    =
    P_t^T x\cdot x
    +
    2r_t^T\cdot x
    +
    \gamma_t^T,
    \qquad x\in\mathbb R^n.
\end{equation}
Here $P^T$ is $\mathcal S_+^n$-valued, $r^T$ is
$\mathbb R^n$-valued, and $\gamma^T$ is real-valued.
For simplicity, we omit their dependence on the terminal data
$G_T$ from the notation. 

The coefficients $P^T$ and $r^T$ solve the finite-horizon backward
stochastic Riccati equation and the associated dual BSDE, respectively:
\begin{align}
dP_t^T
&=-\mathcal R_t(P_t^T,Q_t^T)\,dt
+\sum_{i=1}^dQ_t^{i,T}\,dW_t^i,
\qquad P_T^T=G_T,
\label{eq:finite-riccati}\\
dr_t^T
&=-\mathcal G_t(P_t^T,Q_t^T,r_t^T,k_t^T)\,dt
+\sum_{i=1}^dk_t^{i,T}\,dW_t^i,
\qquad r_T^T=0.
\label{eq:finite_dual}
\end{align}

With $N_t^T:=N_t(P_t^T)$ and
$\eta_t^T:=\eta_t(P_t^T,r_t^T,k_t^T)$,
the coefficient $\gamma^T$ is given by
\begin{equation}\label{eq:finite-horizon-lambda}
\gamma_t^T:=\mathbb E_t\int_t^T\lambda_s^T\,ds,
\;
\lambda_t^T
:=2f_t\cdot r_t^T
+\sum_{i=1}^d\left(P_t^Tg_t^i\cdot g_t^i+2k_t^{i,T}\cdot g_t^i\right)
-(N_t^T)^{-1}\eta_t^T\cdot\eta_t^T.
\end{equation}

\begin{proposition}[\cite{KohlmannTang2003,Tang2003}]
\label{prop:bsde_finite_horizon}
Let Assumption~\ref{ass:bounded-coefficients} hold.
Fix $T>0$, and let $G_T\in L^\infty(\Omega,\mathcal F_T;\mathcal S_+^n)$.
Then equations~\eqref{eq:finite-riccati} and \eqref{eq:finite_dual}
admit unique solutions
$(P^T,Q^T)\in\mathbb S_T^\infty(\mathcal S_+^n)
\times\mathbb H_T^2((\mathcal S^n)^d)$ and
$(r^T,k^T)\in\mathbb S_T^\infty(\mathbb R^n)
\times\mathbb H_T^2((\mathbb R^n)^d)$, respectively.
Moreover, these solutions, together with $\gamma^T$ defined in
\eqref{eq:finite-horizon-lambda}, yield the representation
\eqref{eq:finite-horizon-value-representation} of the value field
$V^{T,G}$.
\end{proposition}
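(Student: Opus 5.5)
Proposition~\ref{prop:bsde_finite_horizon} is quoted from \cite{KohlmannTang2003,Tang2003}, so the plan is to assemble it from those results in three stages: well-posedness of the Riccati equation, then of the dual BSDE, then the value representation.

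\emph{Step 1: the Riccati equation.} Tang's theorem \cite{Tang2003} gives existence and uniqueness of \eqref{eq:finite-riccati} in $\mathbb S_T^\infty(\mathcal S_+^n)\times\mathbb H_T^2((\mathcal S^n)^d)$. It requires bounded coefficients, $S\geq0$, a control weight equal to $I_k$ (so $N_t(p)\geq I_k$ for every $p\geq0$), and bounded nonnegative terminal data $G_T$. All of these hold under Assumption~\ref{ass:bounded-coefficients}. Tang's proof identifies $P_t^T x\cdot x$ with the value of the homogeneous problem with terminal cost $G_T$. This yields the bound
\[
0\leq P^T\leq C\bigl(1+\|G_T\|_\infty\bigr).
\]
The bound comes from testing with $\nu=0$ and using that the coefficients are bounded.

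\emph{Step 2: the dual BSDE.} Once $(P^T,Q^T)$ is fixed, \eqref{eq:finite_dual} is a linear BSDE in $(r,k)$. Its drift coefficients $H_t(P^T,Q^T)$ and $M_t^i(P^T,Q^T)$ involve $\Gamma$, which is linear in $Q^T$. Since $Q^T$ is only in $\mathbb H_T^2$, these coefficients are unbounded, and this is the main obstacle.

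I would follow \cite{KohlmannTang2003} through a duality argument. First I construct the closed-loop state transition of the homogeneous system under the feedback $\Gamma(P^T,Q^T)$. Its local existence and square integrability come from the identity $\mathbb E\int\ell=P^T x\cdot x$, because this identity controls $\int|\Gamma\overline X|^2$. I then define $r_t^T$ as the conditional expectation of $\int_t^T\Phi_{t,s}^\top\varphi_s\,ds$, where $\Phi$ is that transition. The bound $r^T\in\mathbb S_T^\infty$ follows from Cauchy--Schwarz together with the BMO property of $\int Q^T\,dW$. That BMO property holds because $P^T$ is bounded, so applying Itô's formula to $|P^T|^2$ on $[\tau,T]$ controls $\mathbb E_\tau\int_\tau^T|Q^T|^2$. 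The martingale representation theorem then gives $k^T\in\mathbb H_T^2$. For uniqueness, I take the difference of two solutions. It solves a homogeneous linear BSDE, and pairing it with the forward closed-loop flow shows that the difference vanishes.

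\emph{Step 3: the value representation.} For an arbitrary $\nu\in\mathcal U$, I apply Itô's formula to $P_s^TX_s\cdot X_s+2r_s^T\cdot X_s$ on $[t,T]$ with localization. Completing the square exactly as in \eqref{eq:completion-of-squares} gives
\[
J_t^{T,G}(x;\nu)=V_t^{T,G}(x)+\mathbb E_t\int_t^T\Bigl|\bigl(N_s^T\bigr)^{1/2}\Bigl(\nu_s+\bigl(N_s^T\bigr)^{-1}\bigl(L_s^TX_s+\eta_s^T\bigr)\Bigr)\Bigr|^2ds,
\]
where $V_t^{T,G}$ is given by \eqref{eq:finite-horizon-value-representation} and $\gamma^T$ absorbs the term $\lambda^T$ from \eqref{eq:finite-horizon-lambda}. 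Removing the localization requires integrability of the local-martingale terms and of $\lambda^T$. These follow from $X\in\mathbb H_T^2$ with $\mathbb E\sup|X|^2<\infty$, boundedness of $P^T$ and $r^T$, and the BMO estimates for $Q^T$ and $k^T$.

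This identity gives $J_t^{T,G}(x;\nu)\geq V_t^{T,G}(x)$ for every $\nu$. Equality holds for the affine feedback $\nu_s=-(N_s^T)^{-1}(L_s^TX_s+\eta_s^T)$. Its closed-loop equation is globally solvable and the resulting control lies in $\mathcal U$, by the same argument as in Lemma~\ref{lem:existence-verification-controls} with $\varepsilon=0$. Hence the essential infimum is attained and equals \eqref{eq:finite-horizon-value-representation}.
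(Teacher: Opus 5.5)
Your proposal is correct, but it gets the one ingredient the paper adds to \cite{Tang2003,KohlmannTang2003}, the boundedness of $r^T$, by a different mechanism.

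\textbf{The paper's route.} It takes existence, uniqueness and the value representation directly from those references, where only $\mathbb E\sup_t|r_t^T|^2<\infty$ is known. It then argues a posteriori through the control problem. Comparison with $\nu=0$ gives $0\leq V_t^{T,G}(x)\leq C_T(1+|x|^2)$. Evaluating \eqref{eq:finite-horizon-value-representation} at $\pm e_j$ gives $4r_t^T\cdot e_j=V_t^{T,G}(e_j)-V_t^{T,G}(-e_j)$, hence $r^T$ is bounded.

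\textbf{Your route.} You bound $r^T$ a priori from the duality formula $r_t^T\cdot x=\mathbb E_t\int_t^T\varphi_s\cdot\overline X_s^{t,x}\,ds$. You control $\mathbb E_t\int_t^T|\varphi_s|^2ds$ by the BMO bound on $Q^T$, and $\mathbb E_t\int_t^T|\overline X_s^{t,x}|^2ds\leq C_T|x|^2$ by the Riccati value identity plus Gronwall. This is the finite-horizon version of the paper's own infinite-horizon estimate \eqref{eq:dual-representation-tail-bound}.

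\textbf{Trade-offs.} The paper's route takes two lines and needs nothing beyond the cited representation. Yours requires more work: you must build the closed-loop flow and its inverse. The inverse is what turns the conditional-expectation formula into a BSDE via martingale representation. Note that $k^T\in\mathbb H_T^2$ then comes from It\^o's formula for $|r^T|^2$, not from martingale representation alone. In exchange, your argument is self-contained for the dual equation. It also yields the BMO bound on $k^T$ that you rely on in Step~3.

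\textbf{Caveat on Step~3.} You cannot cite Lemma~\ref{lem:existence-verification-controls} verbatim for admissibility of the affine feedback. Its proof uses $S\geq\delta I_n$ and the uniform lower bound in \eqref{eq:growth}. Neither is available under Assumption~\ref{ass:bounded-coefficients} alone, since $P^T$ may be degenerate. The adaptation is short:
\begin{itemize}
\item Set $\Psi_s(y):=P_s^Ty\cdot y+2r_s^T\cdot y+\gamma_s^T$. Then $\Psi_s(y)\geq-C(1+|y|)$, using boundedness of $r^T$ and $\gamma^T\geq-C$ (the latter from $\lambda^T\geq-C(1+|k^T|^2)$ and the BMO bound on $k^T$).
\item Along the optimal feedback the drift of $\Psi_s(X_s^*)$ is $-\ell_s$. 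Together with $\ell_s(x,u)\geq|u|^2$, localization gives $\mathbb E_t\int_t^{\tau_N}|\nu_s^*|^2ds\leq C_x+C\,\mathbb E_t|X_{\tau_N}^*|$.
\item BDG and Gronwall for the bounded-coefficient dynamics give $\mathbb E_t|X_{\tau_N}^*|^2\leq C(1+|x|^2+\mathbb E_t\int_t^{\tau_N}|\nu_s^*|^2ds)$.
\end{itemize}
Combining the last two bounds controls $\mathbb E_t\int_t^{\tau_N}|\nu_s^*|^2ds$ uniformly in $N$. Hence $\nu^*\in\mathcal U$.
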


The existence and uniqueness of a solution to the backward stochastic
Riccati equation \eqref{eq:finite-riccati} follow from \cite{Tang2003}.
Given this solution, the existence and uniqueness of a solution to the
dual BSDE \eqref{eq:finite_dual} follow from \cite{KohlmannTang2003}.
Notice that the coefficient of the quadratic control term in
\cite{KohlmannTang2003} equals $I_k$ here, which leads to
$N_t^T\geq I_k$ in our setting.
Their results therefore apply without any nondegeneracy condition
on $D$, as explained in the concluding remarks of Section~4 of
\cite{KohlmannTang2003}.
Although \cite{KohlmannTang2003} establishes
$\mathbb E[\sup_{0\leq t\leq T}|r_t^T|^2]<\infty$,
it does not assert that $r^T$ is bounded.
In our setting, comparison with the zero control
gives $0\leq V_t^{T,G}(x)\leq C_T(1+|x|^2)$.
Evaluating \eqref{eq:finite-horizon-value-representation} at the
coordinate vectors $\pm e_j$ gives
$4r_t^T\cdot e_j=V_t^{T,G}(e_j)-V_t^{T,G}(-e_j)$,
which shows that $r^T$ is bounded.
The BSDE dynamics imply that $P^T$ and $r^T$ have continuous versions,
so both belong to the stated $\mathbb S_T^\infty$ spaces.

\subsection{Well-posedness of the infinite-horizon Riccati and dual equations}
\label{sec:existence_ergodic_HJB}
In this subsection, we prove the well-posedness and additional
properties stated in Proposition~\ref{prop:infinite-horizon-riccati}.
We rely on the existence of a minimal nonnegative solution to the
infinite-horizon backward stochastic Riccati equation established
in \cite{GuatteriMasiero2010}, together with its associated optimal
feedback and closed-loop stability properties.
For the associated infinite-horizon dual equation, we adapt their
existence proof to the present setting.

\begin{lemma}[\cite{GuatteriMasiero2010}]
\label{lem:homogeneous-closed-loop}
Let Assumptions~\ref{ass:bounded-coefficients} and
\ref{ass:uniform-stabilizability} hold.
Equation~\eqref{eq:infinite-horizon-riccati} admits a minimal
nonnegative solution
$(P,Q)\in\mathbb S^\infty(\mathcal S_+^n)
\times\mathbb H_\infty^2((\mathcal S^n)^d)$.
Here minimality means that $P_t\leq\widetilde P_t$,
$\mathbb P$-a.s. for every $t\geq0$, for any other nonnegative
solution $(\widetilde P,\widetilde Q)$ of
\eqref{eq:infinite-horizon-riccati}.
For every $t\geq0$ and $x\in\mathbb R^n$,
the homogeneous closed-loop equation
\begin{equation}\label{eq:homogeneous-closed-loop}
d\overline X_s^{t,x,*}
=H_s(P_s,Q_s)\overline X_s^{t,x,*}\,ds
+\sum_{i=1}^dM_s^i(P_s,Q_s)\overline X_s^{t,x,*}\,dW_s^i,
\qquad s\geq t,\quad \overline X_t^{t,x,*}=x,
\end{equation}
admits a unique global strong solution $\overline X^{t,x,*}$.
There exist constants $C,\beta>0$, independent of $t$ and $x$, such that
\begin{equation}\label{eq:exponential_stability}
\mathbb E_t|\overline X_s^{t,x,*}|^2
\leq Ce^{-\beta(s-t)}|x|^2,
\qquad s\geq t,\quad\mathbb P\text{-a.s.}
\end{equation}
The induced control
$\overline\nu_s^{t,x,*}:=\Gamma_s(P_s,Q_s)\overline X_s^{t,x,*}$,
$s\geq t$, belongs to $\mathcal U$ and is optimal for the homogeneous
infinite-horizon cost defined in
\eqref{eq:homogeneous-infinite-horizon-cost}:
\begin{equation}\label{eq:homogeneous-value-representation}
P_tx\cdot x
=\overline J_t^\infty(x;\overline\nu^{t,x,*})
=\operatorname*{ess\,inf}_{\nu\in\mathcal U}
\overline J_t^\infty(x;\nu),
\qquad
\mathbb P\text{-a.s., for all }t\geq0,\ x\in\mathbb R^n.
\end{equation}
\end{lemma}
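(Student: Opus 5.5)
This lemma is attributed to \cite{GuatteriMasiero2010}. The plan is to reconstruct it from the finite-horizon theory of Proposition~\ref{prop:bsde_finite_horizon} (applied with $f=g=0$ and $G_T=0$) by a monotone limit, and then to read off stability from the uniform bound \eqref{eq:uniform_stability} together with the coercivity $S\geq\delta I_n$.

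\textbf{Step 1: monotone limit.} For each $T$, let $(P^T,Q^T)$ solve \eqref{eq:finite-riccati} with $G_T=0$. Proposition~\ref{prop:bsde_finite_horizon} gives $P_t^Tx\cdot x=\operatorname*{ess\,inf}_\nu\overline J_t^T(x;\nu)$.

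\emph{Monotonicity:} since the running cost is nonnegative, $T\mapsto P_t^T$ is nondecreasing.
\emph{Uniform bound:} truncating the cost at $T$ gives $0\leq P_t^T\leq MI_n$ by \eqref{eq:uniform_stability}.

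Hence $P_t:=\lim_{T\to\infty}P_t^T$ exists and is bounded. To identify the limit as a solution on each fixed interval $[0,T_0]$, I would view $(P^T,Q^T)$ restricted to $[0,T_0]$ as solving \eqref{eq:finite-riccati} with terminal value $P_{T_0}^T$. I would then use stability of the finite-horizon Riccati BSDE with respect to bounded terminal data. The easiest route is via the value representation: $P^T_{T_0}\to P_{T_0}$ monotonically and boundedly, so the value fields converge, and an It\^o argument on $|P^T-P^{T'}|^2$ gives $Q^T\to Q$ in $\mathbb H_{T_0}^2$. Continuity of $P$ follows from the BSDE.

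\emph{Minimality:} if $(\widetilde P,\widetilde Q)$ is any nonnegative solution, a comparison (verification) argument on $[t,T]$ with terminal value $\widetilde P_T\geq0$ gives $\widetilde P_t\geq P_t^T$. Letting $T\to\infty$ yields $\widetilde P_t\geq P_t$.

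\textbf{Step 2: closed loop and optimality.} The closed-loop equation \eqref{eq:homogeneous-closed-loop} is linear. Its coefficients $H,M^i$ involve $Q$, which is only locally square integrable, so the drift is locally integrable and the diffusion locally square integrable. Hence a unique strong solution exists by the standard theory for linear SDEs with such random coefficients.

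Next I apply It\^o's formula to $P_s\overline X_s\cdot\overline X_s$ under the feedback, using the completion of squares (the homogeneous case of \eqref{eq:completion-of-squares}) and localization. This gives
\[
P_tx\cdot x=\mathbb E_t\Bigl[P_{T}\overline X_{T}\cdot\overline X_{T}\Bigr]+\mathbb E_t\int_t^{T}\bigl(S_s\overline X_s\cdot\overline X_s+|\overline\nu_s|^2\bigr)ds .
\]
Dropping the nonnegative terminal term bounds the infinite-horizon cost of $\overline\nu^{t,x,*}$ by $P_tx\cdot x$. The reverse inequality comes from $P_t^T\leq$ cost of any control and $P^T\uparrow P$. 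Together these give \eqref{eq:homogeneous-value-representation}, and in particular $\overline\nu^{t,x,*}\in\mathcal U$.

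\textbf{Step 3: exponential stability (main obstacle).} Let $\phi(s):=\mathbb E_t[P_s\overline X_s\cdot\overline X_s]$ along the optimal trajectory. By dynamic programming, i.e. the identity of Step 2 written from time $s$,
\[
\phi(s)=\mathbb E_t\int_s^\infty\bigl(S\overline X\cdot\overline X+|\overline\nu|^2\bigr)dr\geq\delta\,\mathbb E_t\int_s^\infty|\overline X_r|^2dr\geq\frac{\delta}{M}\int_s^\infty\phi(r)\,dr,
\]
using $P\leq MI_n$. Set $\psi(s):=\int_s^\infty\phi$. Then $-\psi'\geq(\delta/M)\psi$, so $\psi(s)\leq\psi(t)e^{-\delta(s-t)/M}$, with $\psi(t)\leq M|x|^2/\delta$.

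To pass from this integral decay to the pointwise bound, I need $\phi$ to be nonincreasing. This holds because the identity of Step 2 shows $\phi$ is a tail integral of a nonnegative quantity. Then
\[
\phi(s+1)\leq\int_s^{s+1}\phi\leq\psi(s),
\]
which yields exponential decay of $\phi$.

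Converting decay of $\phi$ into decay of $\mathbb E_t|\overline X_s|^2$ requires $P\geq\delta_PI_n$. This lower bound is the delicate point. I would obtain it from $S\geq\delta I_n$ and the boundedness of the coefficients: over a short horizon $[t,t+h]$, the state cannot leave a neighbourhood of $x$ cheaply, so the cost is at least $c|x|^2$. Concretely, a Gronwall/It\^o estimate gives
\[
\mathbb E_t|\overline X_s|^2\geq\tfrac12|x|^2-C\,\mathbb E_t\int_t^s|\nu|^2
\]
for $s\in[t,t+h]$ with $h$ small. The care needed in this step is to keep all constants uniform in $t$ and $\omega$.

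Once $P\geq\delta_PI_n$ is established, \eqref{eq:exponential_stability} follows with $\beta=\delta/M$.
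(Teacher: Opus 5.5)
\textbf{Gap in Step 1.} The weak point is where the monotone limit $P_t:=\lim_{T\to\infty}P_t^T$ is identified as a solution of \eqref{eq:infinite-horizon-riccati}. Bounded monotone convergence $P^T_{T_0}\uparrow P_{T_0}$ of the terminal weights does not by itself make the value fields converge. Let $(\widehat P,\widehat Q)$ solve \eqref{eq:finite-riccati} on $[0,T_0]$ with terminal value $P_{T_0}$. Comparison then gives only the easy inequality $P\le\widehat P$. The reverse inequality is an interchange of $\lim_T$ and $\operatorname*{ess\,inf}_\nu$: you need $\mathbb E_t[(P_{T_0}-P^T_{T_0})X^{T,*}_{T_0}\cdot X^{T,*}_{T_0}]\to0$ along the $T$-optimal states. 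That requires uniform integrability of $|X^{T,*}_{T_0}|^2$, which you have not established, since these states are only bounded in $L^2$.

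The It\^o argument on $|P^T-P^{T'}|^2$ does not close either. $\mathcal R$ is quadratic in $q$ through $L^\top N^{-1}L$, so the drift difference produces the terms $\mathbb E\int_0^{T_0}|\Delta P_s|^2(|Q^T_s|^2+|Q^{T'}_s|^2)\,ds$ and $\mathbb E\int_0^{T_0}|\Delta P_s|(|Q^T_s|+|Q^{T'}_s|)|\Delta Q_s|\,ds$. You know $\Delta P\to0$ only pointwise in $(\omega,s)$, and $\{|Q^T|^2\}_T$ only bounded in $L^1(d\mathbb P\otimes ds)$, not uniformly integrable. So there is no reason for these terms to vanish.

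\textbf{How to fix it.} What is missing is a uniform rate for $P^T\uparrow P$, and your own Step~3 supplies it if you run it first on finite horizons.
\begin{itemize}
\item Along the $T$-optimal closed loop $X^{T,*}$ from $(t,x)$, with control $\nu^{T,*}$, put $\phi^T(s):=\mathbb E_t[P^T_sX^{T,*}_s\cdot X^{T,*}_s]$. Then $\phi^T(s)\le M\mathbb E_t|X^{T,*}_s|^2$ and $\phi^T(s)\ge\delta\,\mathbb E_t\int_s^T|X^{T,*}_r|^2dr$. Hence $\mathbb E_t\int_s^T|X^{T,*}_r|^2dr\le Ce^{-\delta(s-t)/M}|x|^2$.
\item $P^T$ is not coercive near $T$, so finish with a unit-interval estimate using the bounded original coefficients. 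For $r\in[T-1,T]$, $\mathbb E_t|X^{T,*}_T|^2\le C(\mathbb E_t|X^{T,*}_r|^2+\mathbb E_t\int_r^T|\nu^{T,*}_s|^2ds)\le C\,\mathbb E_t|X^{T,*}_r|^2$. Averaging in $r$ gives $\mathbb E_t|X^{T,*}_T|^2\le Ce^{-\beta(T-t)}|x|^2$.
\item Test the value representation with terminal weight $P^{T'}_T$ against the $T$-optimal control. This gives $0\le P^{T'}_t-P^T_t\le Ce^{-\beta(T-t)}I_n$, uniformly in $\omega$ and in $T'\ge T$.
\item You also need $\mathbb E\int_0^{T_0}|Q^T_s|^2ds\le C_{T_0}$ uniformly in $T$. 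This follows from It\^o on $|P^T|^2$ and $\mathrm{Tr}[P\mathcal R(P,Q)]\le C(1+|Q|)$.
\end{itemize}
With uniform $L^\infty$ convergence on $[0,T_0]$ and this bound, your It\^o argument closes and the limit solves the equation. This is exactly how the paper treats the dual equation in Proposition~\ref{prop:infinite-horizon-riccati}(ii).

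Alternatively, strong convexity of the cost gives $\mathbb E_t\int_t^T|\nu^{T',*}_s-\nu^{T,*}_s|^2ds\le(P^{T'}_t-P^T_t)x\cdot x$. So the optimal controls, and hence the states at $T_0$, converge in $L^2$. This supplies the missing uniform integrability and yields $P=\widehat P$ on $[0,T_0]$.

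\textbf{Comparison with the paper.} Apart from this, your Steps~2--3 are sound and give a self-contained Datko-type proof of \eqref{eq:exponential_stability}. The paper does not reprove the lemma. It cites Guatteri--Masiero: Corollary~3.7 and Theorem~3.8 for the minimal solution, feedback and value representation, and Theorem~4.3 for exponential stability, after checking their stability condition from Assumption~\ref{ass:uniform-stabilizability}. Your short-horizon proof of $P\ge\delta_PI_n$ is the same argument the paper uses in Proposition~\ref{prop:infinite-horizon-riccati}(i).
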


The existence of the minimal nonnegative solution and the
optimal feedback and value representation follow from
\cite[Corollary~3.7 and Theorem~3.8]{GuatteriMasiero2010}.
Assumption~\ref{ass:uniform-stabilizability} and
\eqref{eq:homogeneous-value-representation} imply that $P$ is
uniformly bounded and satisfies the stability condition in
\cite[Definition~4.1]{GuatteriMasiero2010}.
Thus, \eqref{eq:exponential_stability} follows from
\cite[Theorem~4.3]{GuatteriMasiero2010}.

In the following, for simplicity, we write
$N_t:=N_t(P_t)$ and $L_t:=L_t(P_t,Q_t)$,
and similarly for $\Gamma_t$, $H_t$, $M_t^i$, and $\varphi_t$,
with $(P,Q)$ given by Lemma~\ref{lem:homogeneous-closed-loop}.

\begin{proof}[Proof of Proposition~\ref{prop:infinite-horizon-riccati}]
Throughout the proof, $C$ denotes a generic positive constant whose
value may change from one occurrence to another.
All applications of It\^o's formula below are understood after
localization when needed.

\textup{(i)}
Let $(P,Q)$ be the minimal nonnegative solution given by
Lemma~\ref{lem:homogeneous-closed-loop}.
We first prove the uniform coercivity of $P$.
By Assumption~\ref{ass:uniform-stabilizability},
$S_t\geq\delta I_n$, $d\mathbb P\otimes dt$-a.e.
The dynamics \eqref{eq:homogeneous-controlled-dynamics}, the boundedness
of the coefficients, the Burkholder--Davis--Gundy
inequality, and Gronwall's inequality yield
\begin{equation}\label{eq:homogeneous-short-time-increment}
\mathbb E_t|\overline X_{t+h}^{t,x;\nu}-x|^2
\leq Ce^{Ch}\left(
h|x|^2+\mathbb E_t\int_t^{t+h}|\nu_r|^2\,dr
\right),
\qquad h>0,
\end{equation}
where $C$ is independent of $t$, $x$, $\nu$, and $h$.
Using $|y|^2\geq\frac12|x|^2-|y-x|^2$ and integrating
\eqref{eq:homogeneous-short-time-increment}, choose $h>0$
sufficiently small that $Che^{Ch}\leq1/4$. Then
\[
\overline J_t^\infty(x;\nu)
\geq\min\{\delta,1\}\mathbb E_t\int_t^{t+h}
\left(|\overline X_s^{t,x;\nu}|^2+|\nu_s|^2\right)ds
\geq\frac{\min\{\delta,1\}h}{4}|x|^2.
\]
Using
\eqref{eq:homogeneous-value-representation} gives
$P_t\geq\delta_P I_n$, with
$\delta_P:=\min\{\delta,1\}h/4>0$.

Next, since $P\geq0$ and $L_t^\top N_t^{-1}L_t\geq0$,
the boundedness of $P$ and the coefficients gives
$\operatorname{Tr}[P_t\mathcal R_t(P_t,Q_t)]\leq C(1+|Q_t|)$. Applying It\^o's formula to $|P|^2$ on $[t,t+1]$ and using Young's
inequality, we obtain, for a fixed constant $C>0$ independent of $t$,
\[
\mathbb E_t\int_t^{t+1}|Q_s|^2\,ds
\leq C+C\mathbb E_t\int_t^{t+1}|Q_s|\,ds
\leq C+\frac{C^2}{2}
+\frac12\mathbb E_t\int_t^{t+1}|Q_s|^2\,ds.
\]

Thus,
\begin{equation}\label{eq:infinite-riccati-Q-uniform-bound}
\|Q\|_{\mathbb L^2_{\mathrm{unif}}((\mathcal S^n)^d)}^2
=
\sup_{t\geq0}
\left\|\mathbb E_t\int_t^{t+1}|Q_s|^2\,ds\right\|_\infty
\leq 2C+C^2.
\end{equation}

To prove uniqueness, let $(\widetilde P,\widetilde Q)$ be another
solution in the stated class, and set
$\widetilde N_t:=N_t(\widetilde P_t)$ and
$\widetilde\Gamma_t:=-\widetilde N_t^{-1}
L_t(\widetilde P_t,\widetilde Q_t)$.
Applying It\^o's formula to
$(P_s-\widetilde P_s)\overline X_s^{t,x,*}\cdot\overline X_s^{t,x,*}$
and completing the square gives

\begin{equation}\label{eq:infinite-riccati-uniqueness-identity}
(P_t-\widetilde P_t)x\cdot x
\!=\!\mathbb E_t\!\left[
(P_T-\widetilde P_T)\overline X_T^{t,x,*}
\cdot\overline X_T^{t,x,*}\right]
\!+\!\mathbb E_t\!\int_t^T
\left|\widetilde N_s^{1/2}\!
(\Gamma_s-\widetilde\Gamma_s)\overline X_s^{t,x,*}\right|^2\,ds.
\end{equation}
By the boundedness of $P$ and $\widetilde P$ and
\eqref{eq:exponential_stability}, the terminal term in
\eqref{eq:infinite-riccati-uniqueness-identity} converges to zero as
$T\to\infty$. Hence $P_t\geq\widetilde P_t$.
Minimality gives the reverse inequality, so $P=\widetilde P$.
Uniqueness of the semimartingale decomposition then gives
$Q=\widetilde Q$, $d\mathbb P\otimes dt$-a.e.

\textup{(ii)}
Since $P$ and the coefficients are bounded and $N_t\geq I_k$,
\begin{equation}\label{eq:dual-coefficient-bound}
|\Gamma_t|^2+|H_t|^2+\sum_{i=1}^d|M_t^i|^2+|\varphi_t|^2
\leq C(1+|Q_t|^2).
\end{equation}
Recall that
$\mathcal G_t(P_t,Q_t,r,k)
=H_t^\top r+\sum_{i=1}^d(M_t^i)^\top k^i+\varphi_t$.
For every $T>0$, consider the finite-horizon equation
\begin{equation}\label{eq:dual-finite-horizon-approximation}
d\widehat r_t^T
=-\left[H_t^\top\widehat r_t^T
+\sum_{i=1}^d(M_t^i)^\top\widehat k_t^{i,T}+\varphi_t\right]dt
+\sum_{i=1}^d\widehat k_t^{i,T}\,dW_t^i,
\qquad \widehat r_T^T=0.
\end{equation}
The restriction of $(P,Q)$ to $[0,T]$ solves
\eqref{eq:finite-riccati} with terminal condition $G_T=P_T$.
Thus, Proposition~\ref{prop:bsde_finite_horizon} gives a unique solution
$\widehat r^T\in\mathbb S_T^\infty(\mathbb R^n)$ and
$\widehat k^T\in\mathbb H_T^2((\mathbb R^n)^d)$ to
\eqref{eq:dual-finite-horizon-approximation}.

Applying It\^o's formula to
$\widehat r_s^T\cdot\overline X_s^{t,x,*}$, using the dynamics
\eqref{eq:homogeneous-closed-loop} and
\eqref{eq:dual-finite-horizon-approximation},
and then applying the tower property and the conditional
Cauchy--Schwarz inequality, we obtain, for every $t\leq s_0\leq T$,
\begin{align}
\left|\mathbb E_t\left[
\widehat r_{s_0}^T\cdot\overline X_{s_0}^{t,x,*}\right]\right|
&=\Big|\mathbb E_t\int_{s_0}^T
\varphi_s\cdot\overline X_s^{t,x,*}\,ds\Big|
\notag\\
&\leq\sum_{n=0}^\infty
\left(\mathbb E_t\int_{s_0+n}^{s_0+n+1}|\varphi_s|^2\,ds\right)^{1/2}
\left(\mathbb E_t\int_{s_0+n}^{s_0+n+1}
|\overline X_s^{t,x,*}|^2\,ds\right)^{1/2}
\label{eq:dual-representation-tail-bound}\\
&\leq C|x|\sum_{n=0}^\infty e^{-\beta(s_0+n-t)/2}
\leq Ce^{-\beta(s_0-t)/2}|x|.
\notag
\end{align}
where the last line follows from
\eqref{eq:exponential_stability},\eqref{eq:infinite-riccati-Q-uniform-bound}, and
\eqref{eq:dual-coefficient-bound}.
In particular, taking $s_0=t$ gives $|\widehat r_t^T|\leq C$.

Fix $T_2\geq T_1\geq T_0>0$, and set
$\Delta r:=\widehat r^{T_2}-\widehat r^{T_1}$ and
$\Delta k:=\widehat k^{T_2}-\widehat k^{T_1}$.
Applying It\^o's formula to
$\Delta r_s\cdot\overline X_s^{t,x,*}$ on $[t,T_1]$ and using
\eqref{eq:dual-representation-tail-bound} gives
\[
|\Delta r_t\cdot x|
=\left|\mathbb E_t\left[
\widehat r_{T_1}^{T_2}\cdot\overline X_{T_1}^{t,x,*}\right]\right|
\leq Ce^{-\beta(T_1-t)/2}|x|,
\qquad 0\leq t\leq T_1.
\]
Hence,
\begin{equation}\label{eq:dual-r-approximation-convergence}
\left\|\sup_{0\leq t\leq T_0}|\Delta r_t|\right\|_\infty
\leq Ce^{-\beta(T_1-T_0)/2}.
\end{equation}

Together with $|\widehat r_t^T|\leq C$, this shows that
$\widehat r^T$ converges uniformly on compact time intervals,
in $L^\infty(\Omega)$, to a bounded continuous progressively
measurable process $r$.

Subtracting \eqref{eq:dual-finite-horizon-approximation} for the two
horizons, applying It\^o's formula to $|\Delta r|^2$, and using
Young's inequality, we obtain
\begin{equation}\label{eq:dual-k-approximation-convergence}
\mathbb E\int_0^{T_0}|\Delta k_s|^2\,ds
\leq C\mathbb E|\Delta r_{T_0}|^2
+C\mathbb E\int_0^{T_0}
\left(|H_s|+\sum_{i=1}^d|M_s^i|^2\right)|\Delta r_s|^2\,ds
\leq Ce^{-\beta(T_1-T_0)},
\end{equation}
where the last inequality follows from
\eqref{eq:infinite-riccati-Q-uniform-bound},
\eqref{eq:dual-coefficient-bound}, and
\eqref{eq:dual-r-approximation-convergence}.
Thus, $\widehat k^T$ converges in
$\mathbb H_{T_0}^2((\mathbb R^n)^d)$ to a process $k$.
Since $T_0$ is arbitrary, these limits define
$k\in\mathbb H_\infty^2((\mathbb R^n)^d)$.

For $0\leq t\leq T_0\leq T$, write
\eqref{eq:dual-finite-horizon-approximation} in integral form on
$[t,T_0]$. The convergence in
\eqref{eq:dual-r-approximation-convergence} and
\eqref{eq:dual-k-approximation-convergence}, together with the local
integrability of $H$ and the local square integrability of $M$,
allows us to pass to the limit.
Consequently, $(r,k)$ solves \eqref{eq:infinite_dual}.

Applying It\^o's formula to $|r|^2$ on $[t,t+1]$ and using the
boundedness of $r$ and Young's inequality, we obtain
\[
\frac12\mathbb E_t\int_t^{t+1}|k_s|^2\,ds
\leq C+C\mathbb E_t\int_t^{t+1}
\left(|H_s|+|\varphi_s|+\sum_{i=1}^d|M_s^i|^2\right)ds
\leq C',
\]
where $C'$ is independent of $t$ by
\eqref{eq:infinite-riccati-Q-uniform-bound} and \eqref{eq:dual-coefficient-bound}.
Hence $k\in\mathbb L^2_{\mathrm{unif}}((\mathbb R^n)^d)$.

Finally, let $(\widetilde r,\widetilde k)$ be another solution in the
stated class. Applying It\^o's formula to
$(r_s-\widetilde r_s)\cdot\overline X_s^{t,x,*}$ and using the
boundedness of $r$ and $\widetilde r$ together with
\eqref{eq:exponential_stability}, we obtain
\[
|(r_t-\widetilde r_t)\cdot x|
=\left|\mathbb E_t\left[
(r_T-\widetilde r_T)\cdot\overline X_T^{t,x,*}\right]\right|
\leq Ce^{-\beta(T-t)/2}|x|,
\qquad T>t.
\]
Letting $T\to\infty$ and using the arbitrariness of $x$ gives
$r=\widetilde r$.
Uniqueness of the semimartingale decomposition then gives
$k=\widetilde k$, $d\mathbb P\otimes dt$-a.e.
\end{proof}

\begin{remark}
Let $g=0$ and
$\mathbb E\int_0^\infty|f_t|^2\,dt<\infty$.
Then, by the infinite-horizon result of \cite{GuatteriMasiero2010},
we have
\[
\operatorname*{ess\,inf}_{\nu\in\mathcal U}J_t^\infty(x;\nu)
=P_tx\cdot x+2r_t\cdot x+\gamma_t,
\qquad
\gamma_t:=\mathbb E_t\int_t^\infty\lambda_s^0\,ds,
\]
where $\lambda^0$ is defined in
\eqref{eq:quadratic-ergodic-value-field}, with $g=0$.
When $f$ is only assumed to be bounded, however,
$\lambda^0$ need not be integrable over the infinite horizon, so
$\mathbb E_t\int_t^\infty\lambda_s^0\,ds$ need not be finite.
For this reason, the construction of $(V^0,\lambda^0)$ in
\eqref{eq:quadratic-ergodic-value-field}
does not involve this infinite-horizon value representation.
\end{remark}

\subsection{Uniqueness of solutions to the stochastic ergodic HJB equation}
\label{sec:proof_ergodic_HJB}

We now prove the uniqueness results in
Theorem~\ref{thm:existence_ergodic_hjb}~\textup{(ii)--(iii)}.
We first derive a stability property of the approximately optimal
controls constructed in
Lemma~\ref{lem:existence-verification-controls}.

\begin{lemma}
\label{lem:approximate-feedback-stability}

Let Assumptions~\ref{ass:bounded-coefficients} and
\ref{ass:uniform-stabilizability} hold, and let $(V,\lambda)$ be a solution of the stochastic ergodic HJB equation
in the sense of Definition~\ref{def:ergodic_solution} such that
\[
V_s(x)\geq c_0|x|^2-C_0,
\quad\mathbb P\text{-a.s., for all }s\geq0,\ x\in\mathbb R^n,
\quad\text{for some constants }c_0,C_0>0.
\]
For $t\geq0$, $\varepsilon\in(0,1)$, and $x\in\mathbb R^n$, let
$\nu^{\varepsilon,t,x}$ be a control given by
Lemma~\ref{lem:existence-verification-controls} with error
$\varepsilon$, and denote the corresponding state process by
$X^{\varepsilon,t,x}$.
Then there exist constants $C,\beta>0$, independent of $t$, $x$,
and $\varepsilon$, such that $\mathbb P\text{-a.s.}$ for $x,y\in\mathbb R^n$
\begin{equation}\label{eq:approximate-state-moment}
\mathbb E_t|X_s^{\varepsilon,t,x}|^2
\leq Ce^{-\beta(s-t)}|x|^2+C,\, s\geq t,
\,
\lim_{s\to\infty}\mathbb E_t
\left|X_s^{\varepsilon,t,x}-X_s^{\varepsilon,t,y}\right|^2=0.
\end{equation}
\end{lemma}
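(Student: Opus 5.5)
The plan is to prove the two assertions in \eqref{eq:approximate-state-moment} separately. For the moment bound, the tool is the It\^o--Wentzell formula together with a Gronwall argument on $\phi(s):=\mathbb E_tV_s(X_s^{\varepsilon,t,x})$. For the asymptotic coupling, the tool is the quadratic solution $(P,Q,r,k)$ of Proposition~\ref{prop:infinite-horizon-riccati}, used as a Lyapunov function for the difference of the two state processes. All It\^o computations are understood after the localization by $\tau_N$ used in Lemma~\ref{lem:existence-verification-controls}, followed by Fatou's lemma.

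\emph{Step 1 (moment bound).} Write $G_s:=F_s^{\nu_s}V_s(X_s)-\lambda_s\geq0$, with $X=X^{\varepsilon,t,x}$ and $\nu=\nu^{\varepsilon,t,x}$. By the tower property and \eqref{eq:verification-control-bound}, $\int_t^\infty\mathbb E_tG_s\,ds\leq\varepsilon\leq1$.

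The It\^o--Wentzell formula gives $dV_s(X_s)=(G_s+\lambda_s-\ell_s(X_s,\nu_s))\,ds+dM_s$. By $S\geq\delta I_n$ and the upper bound in \eqref{eq:growth}, we have $\ell_s\geq\delta|X_s|^2\geq\kappa V_s(X_s)-C$ with $\kappa:=\delta/C$. Hence
\[
\phi'(s)\leq-\kappa\phi(s)+C+\mathbb E_t|\lambda_s|+\mathbb E_tG_s .
\]
Gronwall then yields
\[
\phi(s)\leq e^{-\kappa(s-t)}V_t(x)+\int_t^se^{-\kappa(s-r)}\bigl(C+\mathbb E_t|\lambda_r|+\mathbb E_tG_r\bigr)dr .
\]
Splitting the $\lambda$-integral over unit intervals and using $\lambda\in\mathbb L^1_{\mathrm{unif}}$ gives a bound $C\sum_ne^{-\kappa n}$, uniformly in $t$. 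The $G$-term is at most $1$. Using $V_t(x)\leq C(1+|x|^2)$ and the assumed lower bound $V_s\geq c_0|\cdot|^2-C_0$, this gives $\mathbb E_t|X_s|^2\leq Ce^{-\kappa(s-t)}|x|^2+C$.

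\emph{Step 2 (square-integrable deviation from the optimal feedback).} Let $u^*_s(z)=\Gamma_sz-N_s^{-1}\eta_s$ and set $w^x_s:=\nu_s-u^*_s(X_s)$. I aim to show
\[
\mathbb E_t\int_t^\infty|w^x_s|^2ds\leq C(1+|x|^2).
\]
By \eqref{eq:completion-of-squares}, $|N_s^{1/2}w^x_s|^2=\ell_s+\mathcal L_s^{\nu_s}V^0_s(X_s)-\lambda^0_s$. Integrating on $[t,T]$ and applying It\^o--Wentzell to both $V^0$ and $V$ gives
\[
\mathbb E_t\int_t^T|N^{1/2}w^x|^2
=V^0_t(x)-V_t(x)-\mathbb E_t[V^0_T-V_T](X_T)+\mathbb E_t\int_t^T(\lambda_s-\lambda^0_s)\,ds+\mathbb E_t\int_t^TG_s\,ds .
\]
The last two terms are bounded uniformly in $t\leq T$ by Theorem~\ref{thm:existence_ergodic_hjb}(ii) and by $\varepsilon$, respectively. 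The boundary terms are $O(1+|x|^2)$ by \eqref{eq:growth} and Step 1. Since $N_s\geq I_k$, the claim follows by letting $T\to\infty$.

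\emph{Step 3 (coupling).} Set $\Delta:=X^{\varepsilon,t,x}-X^{\varepsilon,t,y}$ and $w:=w^x-w^y$, so that $\nu^x-\nu^y=\Gamma\Delta+w$, because $u^*$ is affine with linear part $\Gamma$. Then $\Delta$ solves the homogeneous dynamics \eqref{eq:homogeneous-controlled-dynamics} with control $\bar\nu=\Gamma\Delta+w$ and $\Delta_t=x-y$.

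The homogeneous completion of squares, i.e.\ the Riccati analogue of \eqref{eq:completion-of-squares} with $f=g=0$, gives
\[
d(P_s\Delta_s\cdot\Delta_s)=\bigl(-S_s\Delta_s\cdot\Delta_s-|\bar\nu_s|^2+|N_s^{1/2}w_s|^2\bigr)ds+d(\text{mart}).
\]
The key point is that the unbounded $Q$ disappears from the drift. With $\psi(s):=\mathbb E_tP_s\Delta_s\cdot\Delta_s$, the bounds $P\leq\|P\|_\infty$ and $N$ bounded yield
\[
\psi'\leq-\frac{\delta}{\|P\|_\infty}\psi+C\,\mathbb E_t|w_s|^2 .
\]
By Step 2 the forcing term is integrable on $[t,\infty)$. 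The Gronwall convolution of an $L^1$ forcing with an exponentially decaying kernel tends to zero, so $\psi(s)\to0$. Then $P\geq\delta_PI_n$ gives $\mathbb E_t|\Delta_s|^2\to0$.

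I expect the main obstacle to be Step 2. It is the only place where the uniqueness estimate of Theorem~\ref{thm:existence_ergodic_hjb}(ii) is needed in its conditional, $t$-uniform form, and the boundary terms $\mathbb E_tV_T(X_T)$ must be controlled through Step 1 rather than through growth alone. The localization also has to be handled carefully so that Fatou's lemma applies in the correct direction.
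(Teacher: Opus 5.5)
Your proof is correct. Step~1 is the paper's Step~1, but you obtain the coupling by a genuinely different route.

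The paper never leaves the given solution $(V,\lambda)$. It runs the midpoint state $X^m=(X^x+X^y)/2$ under $\nu^m=(\nu^x+\nu^y)/2$, with $Z:=X^x-X^y$ and $w:=\nu^x-\nu^y$. It then combines the HJB inequality $F_s^{\nu_s^m}V_s(X_s^m)\geq\lambda_s$ with the exact identity $\frac12\ell_s(X^x_s,\nu^x_s)+\frac12\ell_s(X^y_s,\nu^y_s)-\ell_s(X^m_s,\nu^m_s)=\frac14\bigl(S_sZ_s\cdot Z_s+|w_s|^2\bigr)$. This gives \eqref{eq:approximate-midpoint-estimate}, hence $\mathbb E_t\int_t^\infty(|Z_s|^2+|w_s|^2)\,ds<\infty$. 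The paper concludes because $m(s)=\mathbb E_t|Z_s|^2$ and $m'$ are both integrable on $(t,\infty)$.

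You instead compare with the canonical solution. Via \eqref{eq:completion-of-squares} and Theorem~\ref{thm:existence_ergodic_hjb}\textup{(ii)}, you show that $\nu$ stays $L^2$-close to the optimal affine feedback, $\mathbb E_t\int_t^\infty|\nu_s-u_s^*(X_s)|^2\,ds\leq C(1+|x|^2)$. Then $P_s\Delta_s\cdot\Delta_s$ serves as a Lyapunov function whose drift is free of $Q$. This gives a stronger intermediate statement and an explicit exponential-convolution bound on $\mathbb E_tP_s\Delta_s\cdot\Delta_s$.

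The cost is that you invoke Proposition~\ref{prop:infinite-horizon-riccati} and part \textup{(ii)}. Using \textup{(ii)} is not circular, since the paper's proof of \textup{(ii)} uses only Lemma~\ref{lem:existence-verification-controls} and the verification estimates. You should say so explicitly, however, because \textup{(ii)} is proved after this lemma.

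The paper's route is more self-contained: it needs only the HJB inequality, the growth bounds and Step~1. Its midpoint estimate is also reused in the proof of Theorem~\ref{thm:existence_ergodic_hjb}\textup{(iii)} to show that $V-V^0$ is convex, which your argument would not supply for free.

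One harmless slip: in your Step~2 identity the boundary terms should read $\mathbb E_t\bigl[(V^0_T-V_T)(X_T)\bigr]-\bigl(V^0_t(x)-V_t(x)\bigr)$, with signs opposite to yours. Since you bound them in absolute value, nothing changes.
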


\begin{proof}
Fix $t\geq0$ and $\varepsilon\in(0,1)$.
Throughout the proof, $C$ denotes a generic positive constant,
independent of $t$, $x$, $y$, and $\varepsilon$, whose value may
change from one occurrence to another.

\emph{Step 1:}
For $x\in\mathbb R^n$, write
$X^x:=X^{\varepsilon,t,x}$ and
$\nu^x:=\nu^{\varepsilon,t,x}$, and set
$
q_x(s):=\mathbb E_t\left[F_s^{\nu_s^x}V_s(X_s^x)-\lambda_s\right].
$
By Lemma~\ref{lem:existence-verification-controls},
$q_x\geq0$ and $\int_t^\infty q_x(s)\,ds\leq\varepsilon$.
Set $y_x(s):=\mathbb E_tV_s(X_s^x)$.
The It\^o--Wentzell formula, the upper quadratic bound in
\eqref{eq:growth}, and the coercivity of $S$ give, for some
$\beta>0$ and a.e. $s\geq t,$
\[
y_x'(s)
=\mathbb E_t\left[\lambda_s-S_sX_s^x\cdot X_s^x-|\nu_s^x|^2\right]+q_x(s)
\leq\mathbb E_t|\lambda_s|+C-\beta y_x(s)+q_x(s).
\]
Integrating this inequality and using the coercive lower bound on $V$
and $V_t(x)\leq C(1+|x|^2)$ from \eqref{eq:growth}, we obtain
\[
\begin{aligned}
c_0\mathbb E_t|X_s^x|^2-C_0
\leq y_x(s)
&\leq e^{-\beta(s-t)}V_t(x)+C
+\int_t^se^{-\beta(s-r)}(\mathbb E_t|\lambda_r|+q_x(r))\,dr\\
&\leq Ce^{-\beta(s-t)}(1+|x|^2)+C+\varepsilon.
\end{aligned}
\]
Here the first integral is uniformly bounded by
$\lambda\in\mathbb L^1_{\mathrm{unif}}(\mathbb R)$, using the tower
property and a unit-interval decomposition, while the second is bounded
by $\varepsilon$.
Since $\varepsilon<1$, this proves the first estimate in
\eqref{eq:approximate-state-moment}.

\emph{Step 2:}
Fix $x,y\in\mathbb R^n$ and set
$X^m:=(X^x+X^y)/2$, $\nu^m:=(\nu^x+\nu^y)/2$,
$Z:=X^x-X^y$, and $w:=\nu^x-\nu^y$.
By the affine structure of the dynamics, $\nu^m\in\mathcal U$ and
$X^m$ is the state starting from $(x+y)/2$ at time $t$ under
the control $\nu^m$.
Define
$
D_s:=\frac12V_s(X_s^x)+\frac12V_s(X_s^y)-V_s(X_s^m).
$
Applying the It\^o--Wentzell formula to the three state-control pairs
and using the quadratic structure of $\ell$, we obtain
\begin{equation}\label{eq:approximate-midpoint-estimate}
\begin{aligned}
&\frac14\mathbb E_t\int_t^T
\left(S_sZ_s\cdot Z_s+|w_s|^2\right)ds
\\=&D_t-\mathbb E_tD_T
+\mathbb E_t\int_t^T
\left[\frac12F_s^{\nu_s^x}V_s(X_s^x)
+\frac12F_s^{\nu_s^y}V_s(X_s^y)
-F_s^{\nu_s^m}V_s(X_s^m)\right]ds\\
\leq &D_t-\mathbb E_tD_T
+\frac12\mathbb E_t\int_t^T
\left[F_s^{\nu_s^x}V_s(X_s^x)-\lambda_s
+F_s^{\nu_s^y}V_s(X_s^y)-\lambda_s\right]ds\\\leq &D_t-\mathbb E_tD_T+\varepsilon.
\end{aligned}
\end{equation}
Here the first inequality follows from
$F_s^{\nu_s^m}V_s(X_s^m)\geq\lambda_s$ by
\eqref{eq:stochastic-ergodic-hjb}, and the second follows from
Lemma~\ref{lem:existence-verification-controls}.
By \eqref{eq:growth}, \eqref{eq:approximate-state-moment}, and
$|X_T^m|^2\leq(|X_T^x|^2+|X_T^y|^2)/2$,
$\mathbb E_tD_T$ is bounded from below uniformly in $T$.
The coercivity of $S$ and \eqref{eq:approximate-midpoint-estimate}
therefore imply
\begin{equation}\label{eq:approximate-difference-integrability}
\mathbb E_t\int_t^\infty
\left(|Z_s|^2+|w_s|^2\right)ds<\infty.
\end{equation}
Moreover,
$
dZ_s=(A_sZ_s+B_sw_s)\,ds
+\sum_{i=1}^d(C_s^iZ_s+D_s^iw_s)\,dW_s^i.
$
Applying It\^o's formula to $|Z|^2$ and using the boundedness of the
coefficients shows that $m(s):=\mathbb E_t|Z_s|^2$ is locally
absolutely continuous and satisfies
$|m'(s)|\leq C\mathbb E_t\left(|Z_s|^2+|w_s|^2\right)$
for almost every $s\geq t$.
By \eqref{eq:approximate-difference-integrability}, both $m$ and
$m'$ are integrable on $(t,\infty)$, almost surely.
Thus, $m$ has a finite limit at infinity, which must be zero since
$m$ is nonnegative and integrable.
This proves the claimed convergence.
\end{proof}

\begin{proof}[Proof of Theorem~\ref{thm:existence_ergodic_hjb} \textup{(ii)}]
Let $(V^0,\lambda^0)$ be the solution in~\eqref{eq:quadratic-ergodic-value-field}.
Throughout the proof, $C$ denotes a generic positive constant whose
value may change from one occurrence to another but is independent
of the initial time.
Let $(V,\lambda)$ be another solution of the
stochastic ergodic HJB equation, and fix $t\geq0$.
For each $(U,\theta)\in\{(V^0,\lambda^0),(V,\lambda)\}$,
choose a control $\nu^\theta$ from
Lemma~\ref{lem:existence-verification-controls}
for the corresponding solution, with initial time $t$,
initial state $0$, and $\varepsilon=1$.
The same arguments as those leading to
\eqref{eq:finite-lower-bound} and
\eqref{eq:finite-verification-upper-bound} give their conditional
versions starting from $(t,0)$.
Lemma~\ref{lem:existence-verification-controls} ensures that
the conditional counterpart of $R^{\nu^\theta}(T)$ in \eqref{eq:finite-verification-upper-bound},
namely
$R_t^{\nu^\theta}(T):=\mathbb E_t\int_t^T
[F_s^{\nu_s^\theta}U_s(X_s^{t,0;\nu^\theta})-\theta_s]\,ds$,
is bounded by $1$. Consequently,
\[
-C
\leq\operatorname*{ess\,inf}_{\nu\in\mathcal U}J_t^T(0;\nu)
-\mathbb E_t\int_t^T\theta_s\,ds
\leq J_t^T(0;\nu^\theta)-\mathbb E_t\int_t^T\theta_s\,ds
\leq C+1,
\qquad T\geq t.
\]
Here $C$ can be chosen independently of $t$ and $T$ and uniformly
over $\theta\in\{\lambda^0,\lambda\}$.
Comparing these inequalities for $\theta=\lambda^0$ and
$\theta=\lambda$ gives
\begin{equation}\label{eq:cumulative-lambda-uniqueness}
\left|\mathbb E_t\int_t^T(\lambda_s^0-\lambda_s)\,ds\right|
\leq 2C+1,
\qquad 0\leq t\leq T.
\end{equation}
\end{proof}
\begin{proof}[Proof of Theorem~\ref{thm:existence_ergodic_hjb} \textup{(iii)}]
Assume that $V$ satisfies the coercivity and uniform local
continuity conditions in \eqref{eq:uniform-local-continuity}.
Since $P\geq\delta_P I_n$ and $r$ is bounded, the constructed solution
$V^0$ is coercive. Hence
Lemma~\ref{lem:approximate-feedback-stability} applies to both
solutions. Fix $(t,x)$ and choose a control given by
Lemma~\ref{lem:existence-verification-controls} for
$(V,\lambda)$ with initial condition $(t,x)$
and $\varepsilon=1/2$. Denote the corresponding state by $X$.
The It\^o--Wentzell formula, \eqref{eq:growth},
Lemma~\ref{lem:approximate-feedback-stability}, and
\eqref{eq:cumulative-lambda-uniqueness} give
\[
V_t^0(x)-V_t(x)
\leq\mathbb E_t\left[V_T^0(X_T)-V_T(X_T)\right]
+\mathbb E_t\int_t^T(\lambda_s-\lambda_s^0)\,ds+\frac12
\leq C(1+e^{-\beta(T-t)}|x|^2).
\]
Letting $T\to\infty$ and interchanging the two solutions yield
\begin{equation}\label{eq:value-fields-uniform-difference}
|V_t^0(x)-V_t(x)|\leq C,
\quad\mathbb P\text{-a.s.}
\end{equation}
By continuity of $V^0$ and $V$ in $t$ and $x$, this bound
holds $\mathbb P$-a.s. for all $t\geq0$ and $x\in\mathbb R^n$. Fix $t\geq0$, $x,y\in\mathbb R^n$, and $\varepsilon\in(0,1)$.
Apply Lemma~\ref{lem:existence-verification-controls} to
$(V,\lambda)$ with this $\varepsilon$
and initial conditions $(t,x)$ and $(t,y)$ to obtain
controls $\nu^x$ and $\nu^y$, respectively.
Let $X^x,X^y$ be the corresponding states and define
$X^m:=(X^x+X^y)/2$.
Set $G:=V-V^0$. Using \eqref{eq:quadratic-affine-ergodic-solution}, for $T\geq t$, we define
\begin{align}
\widetilde D_T
:&=
\frac12V_T(X_T^x)
+\frac12V_T(X_T^y)
-V_T(X_T^m)\notag\\
&=
\frac14P_T(X_T^x-X_T^y)\cdot(X_T^x-X_T^y)
+\frac12G_T(X_T^x)
+\frac12G_T(X_T^y)
-G_T(X_T^m).
\label{eq:midpoint-gap-decomposition}
\end{align}

For $R,\eta>0$, \eqref{eq:uniform-local-continuity} gives
$\varpi_V^R(\eta)\to0$ a.s. as $\eta\downarrow0$,
while \eqref{eq:growth} gives
$\varpi_V^R(\eta)\leq2C(1+R^2)$.

Set $A_T:=\{|X_T^x|\leq R,\ |X_T^y|\leq R,\
|X_T^x-X_T^y|\leq\eta\}$.
The conditional Markov inequality gives
$\mathbb E_t\mathbf 1_{A_T^c}
\leq R^{-2}\mathbb E_t[|X_T^x|^2+|X_T^y|^2]
+\eta^{-2}\mathbb E_t|X_T^x-X_T^y|^2$.
Together with \eqref{eq:value-fields-uniform-difference} and
\eqref{eq:midpoint-gap-decomposition}, this yields
\begin{align*}
\mathbb E_t|\widetilde D_T|
&\leq
\mathbb E_t\bigl[\varpi_V^R(\eta)\mathbf 1_{A_T}\bigr]
+C\mathbb E_t\bigl[|X_T^x-X_T^y|^2\mathbf 1_{A_T^c}\bigr]
+C\mathbb E_t\mathbf 1_{A_T^c}\\
&\leq
\mathbb E_t\varpi_V^R(\eta)
+C(1+\eta^{-2})\mathbb E_t|X_T^x-X_T^y|^2
+\frac{C}{R^2}\mathbb E_t\bigl[|X_T^x|^2+|X_T^y|^2\bigr].
\end{align*}
By Lemma~\ref{lem:approximate-feedback-stability}, letting
$T\to\infty$ yields
\[
\limsup_{T\to\infty}\mathbb E_t|\widetilde D_T|
\leq\mathbb E_t\varpi_V^R(\eta)+\frac{C}{R^2}.
\]
Letting $\eta\downarrow0$ by dominated convergence
and then $R\to\infty$ yields
$\mathbb E_t|\widetilde D_T|\to0$.

Set $w:=\nu^x-\nu^y$.
Then $X^x-X^y=\overline X^{t,x-y;w}$ satisfies
\eqref{eq:homogeneous-controlled-dynamics} with control $w$.
Applying \eqref{eq:approximate-midpoint-estimate} to
$(V,\lambda)$, letting $T\to\infty$, and using
$\mathbb E_t\widetilde D_T\to0$ and
\eqref{eq:homogeneous-value-representation} yields
\begin{align*}
\widetilde D_t
&\geq
\lim_{T\to\infty}\left[
\frac14\mathbb E_t\int_t^T
\left(
S_s\overline X_s^{t,x-y;w}\cdot\overline X_s^{t,x-y;w}
+|w_s|^2
\right)ds
+\mathbb E_t\widetilde D_T
\right]-\varepsilon\\
&=
\frac14\mathbb E_t\int_t^\infty
\left(
S_s\overline X_s^{t,x-y;w}\cdot\overline X_s^{t,x-y;w}
+|w_s|^2
\right)ds-\varepsilon\\
&\geq
\frac14P_t(x-y)\cdot(x-y)-\varepsilon=\frac12V_t^0(x)+\frac12V_t^0(y)-V_t^0\left(\frac{x+y}{2}\right)-\varepsilon.
\end{align*}

Letting $\varepsilon\downarrow0$ gives $
\frac12G_t(x)+\frac12G_t(y)
-G_t\left(\frac{x+y}{2}\right)\geq0.
$
By continuity of $G$ in $t$ and $x$, the inequality holds
$\mathbb P$-a.s. for all $t\geq0$ and $x,y\in\mathbb R^n$.
Thus $G_t$ is convex and bounded by
\eqref{eq:value-fields-uniform-difference}, hence constant in $x$.
Setting $c_t:=V_t^0(0)-V_t(0)$ defines a bounded continuous
semimartingale satisfying
$
V_t^0(x)=V_t(x)+c_t.
$ Write
$dc_t=\alpha_t\,dt+\sum_{i=1}^d\gamma_t^i\,dW_t^i$.
By uniqueness of the semimartingale decomposition,
$a_t^0(x)-a_t(x)=\alpha_t$ and
$Z_t^{0,i}(x)-Z_t^i(x)=\gamma_t^i$,
$d\mathbb P\otimes dt$-a.e., for each $x$.
By spatial continuity, the null set can be chosen independently of $x$.
Since $c$ is independent of $x$, the spatial derivatives of $V^0$ and
$V$ coincide, as do those of their martingale coefficients.
The It\^o--Wentzell operators therefore satisfy
$\mathcal L_t^uV_t^0(x)=\mathcal L_t^uV_t(x)+\alpha_t$.
Comparing the two stochastic ergodic HJB equations gives
$\lambda_t^0=\lambda_t+\alpha_t$,
$d\mathbb P\otimes dt$-a.e.
Thus $V_t^0(x)-V_t(x)-\int_0^t
(\lambda_s^0-\lambda_s)\,ds$ is independent of $x$
and is a local martingale, as claimed.
\end{proof}

\section{Further characterizations}
\label{sec:further-characterizations}

\subsection{Periodic cases}
\label{subsec:periodic-stationary}
In this subsection, we study random periodic coefficients. Related
ergodic LQ problems with random periodic coefficients were considered
in \cite{WuZhang2026}, although their setting and results differ from
ours.

For this purpose, we impose the following additional structure on the
underlying filtered probability space.

\begin{assumption}
\label{ass:brownian-shift}
The filtered probability space
$(\Omega,\mathcal F,\mathbb P,(\mathcal F_t)_{t\geq0})$
is equipped with a family of measurable maps
$\theta_t:\Omega\to\Omega$, $t\geq0$, satisfying:
\begin{enumerate}[label=\textup{(\roman*)}]
\item Semigroup property:
$\theta_0=\mathrm{Id}$ and
$\theta_{t+s}=\theta_t\circ\theta_s$ for all $s,t\geq0$.

\item Measure-preserving Brownian shifts:
$\mathbb P\circ\theta_t^{-1}=\mathbb P$ for every $t\geq0$, and
$W_s\circ\theta_t=W_{s+t}-W_t$ for every $s,t\geq0$, $\mathbb P$-a.s.
\end{enumerate}
\end{assumption}

\begin{definition}
\label{def:random-periodic}
Fix $\tau>0$. A process $Y$ is called $\tau$-random periodic if
$Y_{t+\tau}=Y_t\circ\theta_\tau$,
$d\mathbb P\otimes dt$-a.e.
A random field $V$ is called $\tau$-random periodic if
$V_{t+\tau}(x)=V_t(x)\circ\theta_\tau$
for every $x\in\mathbb R^n$,
$d\mathbb P\otimes dt$-a.e.
\end{definition}

\begin{remark}
Assumption~\ref{ass:brownian-shift} holds on canonical
Wiener space $\Omega=C_0(\mathbb R_+;\mathbb R^d)$ with Wiener measure,
where $W_t(\omega)=\omega(t)$ and
$
(\theta_t\omega)(s)=\omega(t+s)-\omega(t).
$
This is the standard Brownian shift construction used in random
dynamical systems; see Arnold~\cite{Arnold1998}.
For the notion of random periodic solutions in random dynamical
systems, we refer to Zhao and Zheng~\cite{ZhaoZheng2009}.
The random periodicity relation in
Definition~\ref{def:random-periodic} is also used in
\cite[Definition~2.1]{WuZhang2026}.
\end{remark}

\begin{proposition}
\label{prop:random-periodicity-ergodic-LQ}
Let Assumption~\ref{ass:brownian-shift} hold, and let
$A,B,C,D,f,g,S$ satisfy
Assumptions~\ref{ass:bounded-coefficients} and
\ref{ass:uniform-stabilizability},
and be $\tau$-random periodic for some $\tau>0$.
Then there exists a $\tau$-random periodic solution $(V^0,\lambda^0)$
of the stochastic ergodic HJB equation, and \begin{equation}\label{eq:periodic-ergodic-value}
\lambda_*(K)
=
\lambda^*(K)
=
\frac1{\tau}\int_0^\tau\mathbb E[\lambda_t^0]\,dt,
\qquad K\in\mathcal K.
\end{equation}
\end{proposition}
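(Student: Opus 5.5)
The plan is to show that the canonical quadratic solution $(V^0,\lambda^0)$ of Theorem~\ref{thm:existence_ergodic_hjb}(i) is itself $\tau$-random periodic, and then to read off \eqref{eq:periodic-ergodic-value} from the periodicity of $t\mapsto\mathbb E[\lambda^0_t]$ together with Proposition~\ref{prop:relation-value}. The periodicity of $(P,Q,r,k)$ should come from the uniqueness statements of Proposition~\ref{prop:infinite-horizon-riccati}. The idea is that shifting by $\theta_\tau$ maps solutions to solutions.

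\textbf{Step 1 (periodicity of $(P,Q)$).} Set $W^\tau_s:=W_{s+\tau}-W_\tau$ and $\mathcal G_s:=\mathcal F_{s+\tau}$; then $W^\tau$ is a $\mathcal G$-Brownian motion. I will compare two processes on this enlarged setting:
\begin{itemize}
\item $\widehat P_s:=P_{s+\tau}$, $\widehat Q_s:=Q_{s+\tau}$, which trivially solve the Riccati equation \eqref{eq:infinite-horizon-riccati} driven by $W^\tau$ with coefficients $A_{s+\tau},\dots,S_{s+\tau}$;
\item $\check P_s:=P_s\circ\theta_\tau$, $\check Q_s:=Q_s\circ\theta_\tau$.
\end{itemize}
Composing the integral form of \eqref{eq:infinite-horizon-riccati} with $\theta_\tau$ should show that $(\check P,\check Q)$ also solves the Riccati equation driven by $W^\tau$. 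This uses that $\theta_\tau$ preserves $\mathbb P$, that $W\circ\theta_\tau=W^\tau$, and that Itô integrals commute with the shift, i.e.\ $\bigl(\int_0^t Q_s\,dW_s\bigr)\circ\theta_\tau=\int_0^t Q_s\circ\theta_\tau\,dW^\tau_s$, which is checked on simple integrands and then extended by isometry. By random periodicity of the coefficients, $A_s\circ\theta_\tau=A_{s+\tau}$, etc., so the coefficients of the two equations agree.

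Both solutions are bounded, nonnegative and progressively measurable for $\mathcal G$ (for $\check P$ because $\sigma(W^\tau_r,r\le s)\subset\mathcal G_s$). I then rerun the uniqueness argument of Proposition~\ref{prop:infinite-horizon-riccati}(i) in a symmetric form. Identity \eqref{eq:infinite-riccati-uniqueness-identity} with the closed loop of $\widehat P$ gives $\widehat P\ge\check P$. The same identity with the closed loop of $\check P$ gives the reverse inequality. For the second use I need exponential stability \eqref{eq:exponential_stability} for the closed loop of $\check P$. This follows as in Lemma~\ref{lem:homogeneous-closed-loop}, since boundedness of $\check P$ together with $S\ge\delta I_n$ gives the stability condition of \cite[Theorem~4.3]{GuatteriMasiero2010}. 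This symmetric argument is needed because minimality was only asserted in the original filtration. The conclusion is $P_{s+\tau}=P_s\circ\theta_\tau$, and uniqueness of the semimartingale decomposition then gives $Q_{s+\tau}=Q_s\circ\theta_\tau$.

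\textbf{Step 2 (periodicity of $(r,k)$ and $\lambda^0$).} The same transport argument applies to the dual equation \eqref{eq:infinite_dual}, now using the uniqueness argument of Proposition~\ref{prop:infinite-horizon-riccati}(ii), which relies only on boundedness of $r$ and exponential stability of the (now common) closed loop. This gives $r_{s+\tau}=r_s\circ\theta_\tau$ and $k_{s+\tau}=k_s\circ\theta_\tau$. Since $V^0$ and $\lambda^0$ in \eqref{eq:quadratic-ergodic-value-field} are pointwise algebraic functions of $(P,Q,r,k)$ and the coefficients, both are $\tau$-random periodic. They form a solution by Theorem~\ref{thm:existence_ergodic_hjb}(i).

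\textbf{Step 3 (ergodic values).} Because $\mathbb P\circ\theta_\tau^{-1}=\mathbb P$, we get $\mathbb E[\lambda^0_{t+\tau}]=\mathbb E[\lambda^0_t]$ for a.e.\ $t$, and this function is locally integrable since $\lambda^0\in\mathbb L^1_{\mathrm{unif}}(\mathbb R)$. Hence $\Lambda^0(t)=t\bar\lambda+O(1)$ with $\bar\lambda:=\frac1\tau\int_0^\tau\mathbb E[\lambda^0_t]\,dt$, and $\frac1T\Lambda^0(T)\to\bar\lambda$. Thus $\lambda_*(K^0)=\lambda^*(K^0)=\bar\lambda$. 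Proposition~\ref{prop:relation-value} then gives $\lambda_*(K)=\lambda^*(K)=\bar\lambda$ for every $K\in\mathcal K$.

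\textbf{Main obstacle.} I expect the hardest part to be Step 1: justifying rigorously that composition with $\theta_\tau$ commutes with stochastic integration, and that the uniqueness argument works in the shifted filtration $\mathcal G$ rather than $\mathbb F$. My first attempt would be to carry out the whole argument on $(\Omega,\mathcal G,W^\tau)$, where all needed estimates hold verbatim.
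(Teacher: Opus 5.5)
Your proposal is correct and follows the paper's proof. It obtains periodicity of $(P,Q,r,k)$ from shift-invariance of the equations plus uniqueness, deduces periodicity of $(V^0,\lambda^0)$ and of $t\mapsto\mathbb E[\lambda^0_t]$, and reads off the ergodic value. The differences are only local: the paper handles your filtration concern by extending the shifted solution back to $[0,\tau]$ with the finite-horizon Riccati equation and then invoking the uniqueness (via minimality in $\mathbb F$) of Proposition~\ref{prop:infinite-horizon-riccati}(i), which avoids your two-sided comparison and the separate closed-loop stability check for $\check P$; and it concludes with the direct integration-by-parts bound $\rho K(0)\|\Xi\|_\infty$ rather than through Proposition~\ref{prop:relation-value}.
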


\begin{proof}
Throughout the proof, all identities between processes are understood
$d\mathbb P\otimes ds$-a.e.

\emph{Step 1: periodicity of the infinite-horizon equations.}
Set $W_s^\tau:=W_{s+\tau}-W_\tau$ and
$\mathcal F_s^\tau:=\mathcal F_{s+\tau}$, $s\geq0$.
Then $W^\tau$ is a Brownian motion with respect to
$\mathbb F^\tau:=(\mathcal F_s^\tau)_{s\geq0}$.
By Assumption~\ref{ass:brownian-shift} and the random periodicity
of the coefficients, both
$(P_{s+\tau},Q_{s+\tau})$ and
$(P_s\circ\theta_\tau,Q_s\circ\theta_\tau)$ solve the same
Riccati equation with generator $\mathcal R_{s+\tau}$,
driven by $W^\tau$ in the filtration $\mathbb F^\tau$.
Both pairs belong to the bounded nonnegative solution class.

Uniqueness for the shifted equation follows from
Proposition~\ref{prop:infinite-horizon-riccati}\textup{(i)}.
Indeed, in the original time variable, any solution in the stated
class can be extended from $[\tau,\infty)$ to $[0,\infty)$
by Proposition~\ref{prop:bsde_finite_horizon},
using its value at $\tau$ as the terminal condition.
The extension belongs to the uniqueness class of
Proposition~\ref{prop:infinite-horizon-riccati}\textup{(i)}.
Consequently,
$
P_{s+\tau}=P_s\circ\theta_\tau,
Q_{s+\tau}=Q_s\circ\theta_\tau.
$

Similarly, the pairs $(r_{s+\tau},k_{s+\tau})$ and
$(r_s\circ\theta_\tau,k_s\circ\theta_\tau)$ solve the same shifted
dual equation, driven by $W^\tau$ in $\mathbb F^\tau$.
The uniqueness argument in the proof of
Proposition~\ref{prop:infinite-horizon-riccati}\textup{(ii)}
also applies on $[\tau,\infty)$.
Hence
$
r_{s+\tau}=r_s\circ\theta_\tau,
k_{s+\tau}=k_s\circ\theta_\tau.
$

\emph{Step 2: periodicity of $(V^0,\lambda^0)$.}
By \eqref{eq:quadratic-ergodic-value-field}, the periodicity of
$(P,Q,r,k)$ and the coefficients implies
$
V_{t+\tau}^0(x)=V_t^0(x)\circ\theta_\tau, \lambda_{t+\tau}^0=\lambda_t^0\circ\theta_\tau.
$

Set $m(t):=\mathbb E[\lambda_t^0]$ and
$\overline\lambda:=\tau^{-1}\int_0^\tau m(t)\,dt$.
Since $\theta_\tau$ preserves $\mathbb P$, $m$ is
$\tau$-periodic almost everywhere.
Therefore,
$\Xi(T):=\int_0^T(m(t)-\overline\lambda)\,dt$ is bounded,
and it then follows that
$\left|\mathbb E\int_0^\infty K_\rho(t)\lambda_t^0\,dt-\overline\lambda\right|
\leq \rho K(0)\|\Xi\|_\infty\longrightarrow0$
as $\rho\downarrow0$, which yields
\eqref{eq:periodic-ergodic-value}.
\end{proof}

\begin{remark}[Stationary coefficients]
The stationary case can be treated similarly in a two-sided
Brownian framework, with processes and filtrations indexed by
$\mathbb R$ rather than $\mathbb R_+$, which we do not study here.
In that setting, our construction yields a stationary canonical
solution $(V^0,\lambda^0)$,
with $\lambda_*(K)=\lambda^*(K)=\mathbb E\int_0^1\lambda_t^0\,dt$
for all $K\in\mathcal K$.
When $g=0$, this characterization of the optimal ergodic cost
is consistent with \cite{GuatteriMasiero2009Stationary},
where the stationary control problem is studied through
stationary solutions of the infinite-horizon Riccati and dual equations.
\end{remark}

\subsection{Stability}
\label{subsec:stability}
Throughout this subsection, let $\Theta=(A,B,C,D,S,f,g)$ and
$\widetilde\Theta=(\widetilde A,\widetilde B,\widetilde C,
\widetilde D,\widetilde S,\widetilde f,\widetilde g)$ be two coefficient
tuples satisfying Assumptions~\ref{ass:bounded-coefficients} and
\ref{ass:uniform-stabilizability}.
Without loss of generality, let $\delta,M>0$ be common constants
in Assumption~\ref{ass:uniform-stabilizability} for both tuples.
For $\Theta$, set
$\|\Theta\|_{\mathbb L^\infty}
:=\max_{\Phi\in\{A,B,C,D,S,f,g\}}
\|\Phi\|_{\mathbb L^\infty}$,
and denote by $(P,Q,r,k)$ the corresponding solution from
Proposition~\ref{prop:infinite-horizon-riccati}, and by $(V^0,\lambda^0)$
the associated solution defined in
\eqref{eq:quadratic-ergodic-value-field}.
We evaluate the coefficient maps in \eqref{eq:LQ-coefficient-maps}
at $(P,Q,r,k)$ and omit their arguments.
For $\widetilde\Theta$, we denote the corresponding coefficients,
solutions, and associated quantities by the same symbols with tildes,
and use the same componentwise norm for
$\|\Theta-\widetilde\Theta\|_{\mathbb L^\infty}$.
We write $\|\cdot\|_\infty$ for the $L^\infty(\Omega,\mathbb P)$ norm
of random variables.

\begin{proposition}
\label{prop:stability-infinite-horizon-equations}
There exists a constant $C>0$, depending only on
$\|\Theta\|_{\mathbb L^\infty}$,
$\|\widetilde\Theta\|_{\mathbb L^\infty}$, $\delta$, and $M$,
such that the following hold.

\hspace{-5mm}\textup{(i)}
$
\|P-\widetilde P\|_{\mathbb S^\infty}
+\|r-\widetilde r\|_{\mathbb S^\infty}
+\|Q-\widetilde Q\|_{\mathbb L^2_{\mathrm{unif}}}
+\|k-\widetilde k\|_{\mathbb L^2_{\mathrm{unif}}}
+\|\lambda^0-\widetilde\lambda^0\|_{\mathbb L^1_{\mathrm{unif}}}
\leq C\|\Theta-\widetilde\Theta\|_{\mathbb L^\infty}.$

\hspace{-5mm}\textup{(ii)}
$
|\lambda_*(K)-\widetilde\lambda_*(K)|,\,
|\lambda^*(K)-\widetilde\lambda^*(K)|
\leq
C\lim_{t\to\infty}
\operatorname*{ess\,sup}_{s\geq t}
\|\Theta_s-\widetilde\Theta_s\|_\infty, K\in\K.
$
\end{proposition}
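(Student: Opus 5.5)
The approach is to derive linear BSDEs for the differences and control them with a duality argument along the homogeneous closed loop of one tuple. Lemma~\ref{lem:homogeneous-closed-loop} gives exponential stability of this closed loop with constants depending only on the data.

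\textbf{Step 1: bound on $P-\widetilde P$.} Set $\Delta P:=P-\widetilde P$ and $\Delta Q:=Q-\widetilde Q$. Apply It\^o's formula to $\Delta P_s\overline X_s^{t,x,*}\cdot\overline X_s^{t,x,*}$, where $\overline X^{t,x,*}$ is the closed loop of $\Theta$. Complete the square exactly as in \eqref{eq:infinite-riccati-uniqueness-identity}, but now with the generator of $\widetilde P$ evaluated at the coefficients of $\Theta$. This yields an identity of the form
\[
\Delta P_t x\cdot x=\mathbb E_t[\Delta P_T\overline X_T\cdot\overline X_T]+\mathbb E_t\int_t^T\bigl(|\widetilde N_s^{1/2}(\Gamma_s-\widetilde\Gamma_s)\overline X_s|^2+E_s(\overline X_s)\bigr)ds .
\]
The error $E_s$ is quadratic in $\overline X$, with coefficient bounded by $C\|\Theta-\widetilde\Theta\|_{\mathbb L^\infty}(1+|\widetilde Q_s|^2)$. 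Let $T\to\infty$ and use \eqref{eq:exponential_stability}, the $\mathbb L^2_{\mathrm{unif}}$ bound \eqref{eq:infinite-riccati-Q-uniform-bound} and the unit-interval decomposition from \eqref{eq:dual-representation-tail-bound}. This gives $\Delta P_t\ge -C\|\Theta-\widetilde\Theta\|I_n$.

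The factor $|\widetilde Q|^2$ paired with $|\overline X|^2$ is a problem: the conditional Cauchy--Schwarz step needs $\mathbb E_t\int|\overline X|^4$. I would handle it through a fourth-moment version of \eqref{eq:exponential_stability}, or more cleanly by using a variational representation instead of the identity. Namely, $P_tx\cdot x=\operatorname{ess\,inf}\overline J^\infty_t$. Plugging the optimal control $\widetilde{\overline\nu}$ of $\widetilde\Theta$ into the $\Theta$-problem changes only $A,B,C,D,S$. The state difference is then linear in the coefficient perturbation times $(\widetilde X,\widetilde\nu)$, whose costs are bounded by $M|x|^2$. A Gronwall argument over exponentially stable dynamics then bounds the difference in costs by $C\|\Theta-\widetilde\Theta\||x|^2$, with no $Q$ appearing. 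Symmetrizing gives $\|\Delta P\|_{\mathbb S^\infty}\le C\|\Theta-\widetilde\Theta\|$. Establishing the needed stability of the perturbed state without $Q$ is the \textbf{main obstacle}. If that fails, I would fall back on the localized identity with stopping times on $|\widetilde Q|$.

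\textbf{Step 2: bound on $Q-\widetilde Q$.} Apply It\^o's formula to $|\Delta P|^2$ on $[t,t+1]$. Since $|\Delta P|\le C\varepsilon$ with $\varepsilon:=\|\Theta-\widetilde\Theta\|$, and the generator difference is bounded by $C(\varepsilon+|\Delta P|)(1+|Q|^2+|\widetilde Q|^2)+C|\Delta Q|(1+|Q|+|\widetilde Q|)$, Young's inequality gives $\mathbb E_t\int_t^{t+1}|\Delta Q|^2\le C\varepsilon^2+C\varepsilon\,\mathbb E_t\int(1+|Q|^2+|\widetilde Q|^2)$. This yields only order $\varepsilon$ in $\|\cdot\|^2_{\mathbb L^2_{\mathrm{unif}}}$. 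To reach the linear rate, I would keep the structure explicit: the term in $\Delta Q$ is linear, so the cross term $\Delta P\cdot\Delta Q$ contributes $C\varepsilon\,\mathbb E_t\int|\Delta Q|(1+|Q|)$. That is only enough if $Q$ lies in $\mathbb L^4_{\mathrm{unif}}$-type BMO spaces, which follows from the BMO property of $Q$ via energy inequalities.

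\textbf{Step 3: the dual variables $r,k$.} Argue in the same way. The difference $\Delta r$ solves a linear BSDE driven by $H,M$ with source bounded by $C(\varepsilon+|\Delta P|+|\Delta Q|)(1+|Q|+|\widetilde Q|+|k|+|\widetilde k|)$. Represent $\Delta r_t\cdot x$ by duality with $\overline X^{t,x,*}$ as in \eqref{eq:dual-representation-tail-bound}, which gives $\|\Delta r\|_{\mathbb S^\infty}\le C\varepsilon$. Then It\^o's formula on $|\Delta r|^2$ bounds $\Delta k$ in $\mathbb L^2_{\mathrm{unif}}$.

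\textbf{Step 4: $\lambda^0$ and part (ii).} The bound on $\lambda^0-\widetilde\lambda^0$ follows from \eqref{eq:quadratic-ergodic-value-field} by writing the difference of the products term by term and applying the Cauchy--Schwarz inequality in $\mathbb L^2_{\mathrm{unif}}$.

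For (ii), first note that $\lambda_*(K)$ and $\lambda^*(K)$ are unchanged if $\widetilde\Theta$ is modified on a finite interval $[0,t]$. Indeed, define $\widehat\Theta:=\Theta$ on $[0,t)$ and $\widehat\Theta:=\widetilde\Theta$ on $[t,\infty)$. By uniqueness, its solution agrees with $\widetilde{\ }$-quantities on $[t,\infty)$. Hence $\mathbb E\int_0^T(\widehat\lambda^0-\widetilde\lambda^0)$ is bounded, since each term lies in $\mathbb L^1_{\mathrm{unif}}$ over a fixed window. By the $\rho K(0)$ argument this difference disappears in the limit. Applying (i) to $\Theta$ and $\widehat\Theta$ gives $|\Lambda^{K_\rho}-\widehat\Lambda^{K_\rho}|\le\int K_\rho\,\mathbb E|\lambda^0-\widehat\lambda^0|\le C\sup_{s\ge t}\|\Theta_s-\widetilde\Theta_s\|_\infty$, using the $\mathbb L^1_{\mathrm{unif}}$ bound and the monotonicity of $K$. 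Finally let $t\to\infty$.
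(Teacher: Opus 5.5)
Your Step~1, on which everything else rests, is not completed, and the route you sketch for it fails as stated. Plugging the optimal homogeneous control $\widetilde\nu$ of $\widetilde\Theta$ into the $\Theta$-system is the right idea, and it is what the paper does. It cannot be done on all of $[t,\infty)$, however. With $\Delta A:=A-\widetilde A$ etc., the error $\Delta X:=X-\widetilde X$ solves
\[
d\Delta X_s=\bigl(A_s\Delta X_s+\Delta A_s\widetilde X_s+\Delta B_s\widetilde\nu_s\bigr)\,ds+\sum_{i=1}^d\bigl(C_s^i\Delta X_s+\Delta C_s^i\widetilde X_s+\Delta D_s^i\widetilde\nu_s\bigr)\,dW_s^i,
\]
whose homogeneous part is the \emph{uncontrolled} pair $(A,C)$ of $\Theta$. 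Assumption~\ref{ass:uniform-stabilizability} gives stabilizability, not stability, so there are no exponentially stable dynamics to run Gronwall against. Take the deterministic scalar case $A=a>0$, $\widetilde A=a+\varepsilon$, $B=\widetilde B=1$, $C=D=0$. There $|\Delta X_s|\sim c\,\varepsilon|x|e^{a(s-t)}$, so the $\Theta$-cost of $\widetilde\nu$ is infinite.

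The missing idea is a finite window combined with a contraction. On $[t,t+L]$, the pairs $(P,Q)$ and $(\widetilde P,\widetilde Q)$ solve finite-horizon Riccati equations with terminal matrices $P_{t+L}$ and $\widetilde P_{t+L}$. Proposition~\ref{prop:bsde_finite_horizon} (with $f=g=0$) therefore gives
\[
P_tx\cdot x\le\mathbb E_t\int_t^{t+L}\bigl(S_sX_s\cdot X_s+|\widetilde\nu_s|^2\bigr)\,ds+\mathbb E_t\bigl[P_{t+L}X_{t+L}\cdot X_{t+L}\bigr],
\]
with equality for $\widetilde P_tx\cdot x$ along $(\widetilde X,\widetilde\nu)$. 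On the window, BDG and Gronwall with the bounded open-loop coefficients give $\mathbb E_t\sup_{[t,t+L]}|\Delta X|^2\le C_L\varepsilon^2|x|^2$, and no $Q$ enters. By \eqref{eq:exponential_stability}, the terminal mismatch is bounded by $(Ce^{-\beta L}\|P_{t+L}-\widetilde P_{t+L}\|_\infty+C_L\varepsilon)|x|^2$. Symmetrizing yields $\|\Delta P_t\|_\infty\le Ce^{-\beta L}\|\Delta P_{t+L}\|_\infty+C_L\varepsilon$. Choose $L$ with $Ce^{-\beta L}<1$, take the supremum in $t$, and use the a priori boundedness of $\Delta P$ to conclude. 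Your fallback (the identity along the closed loop, localized in $|\widetilde Q|$) is not an argument. Localization does not give a bound on $\mathbb E_t\int_t^\infty\varepsilon|\widetilde Q_s|^2|\overline X_s|^2\,ds$ that is uniform in $t$.

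Steps~2 and~3 also need repair.

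In Step~2 the bookkeeping is off. Since $|\Delta P|\le C\varepsilon$, the term $|\Delta P|(\varepsilon+|\Delta P|)(1+|Q|^2+|\widetilde Q|^2)$ is already $O(\varepsilon^2)$. Young's inequality on $C\varepsilon(1+|Q|+|\widetilde Q|)|\Delta Q|$ absorbs $\frac12|\Delta Q|^2$ and leaves $C\varepsilon^2(1+|Q|^2+|\widetilde Q|^2)$. Then \eqref{eq:infinite-riccati-Q-uniform-bound} gives the linear rate directly. No $\mathbb L^4_{\mathrm{unif}}$ bound is needed, and none would follow from the BMO property anyway.

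In Step~3 the source contains $(\Delta M^i)^\top\widetilde k^i$, a product of two $\mathbb L^2_{\mathrm{unif}}$ processes. It therefore lies only in $\mathbb L^1_{\mathrm{unif}}$, and the Cauchy--Schwarz argument of \eqref{eq:dual-representation-tail-bound} does not apply. You need $\mathbb E_t\sup_{s\in[t+n,t+n+1]}|\overline X_s^{t,x,*}|^2\le Ce^{-\beta n}|x|^2$. This follows by viewing $\overline X^{t,x,*}$ as the bounded-coefficient state driven by $\overline\nu^{t,x,*}$ and combining \eqref{eq:exponential_stability}, \eqref{eq:homogeneous-value-representation} and BDG. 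Add the energy inequality $\mathbb E_t\bigl(\int_{t+n}^{t+n+1}|Y_s|^2\,ds\bigr)^2\le2\|Y\|_{\mathbb L^2_{\mathrm{unif}}}^4$ and a three-factor H\"older estimate to close the bound.

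Your argument for (ii) is correct. Splicing $\Theta$ on $[0,t)$ with $\widetilde\Theta$ on $[t,\infty)$ is a clean way to make precise the paper's remark that the proof of (i) applies on $[t,\infty)$. You only need to check two things. First, the spliced tuple satisfies Assumption~\ref{ass:uniform-stabilizability} with data-dependent constants: follow the optimal homogeneous feedback of $\Theta$ up to $t$ and that of $\widetilde\Theta$ afterwards. Second, use $\sum_{j\ge0}K_\rho(j)\le1+\rho K(0)$.
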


\begin{proof}
Throughout the proof, $C$ denotes a generic positive constant, which
may change from line to line and depends only on the quantities
specified in the statement, while $C_L$ may additionally depend on
the interval length $L$. 
We first prove \textup{(i)}. Set $\varepsilon:=\|\Theta-\widetilde\Theta\|_{\mathbb L^\infty}$.

\emph{Step 1: stability of $P$.}
Fix $t\geq0$, $L>0$, and $x\in\mathbb R^n$. Let
$(\widetilde X,\widetilde \nu)$ be the optimal homogeneous state-control
pair for the second system starting from $(t,x)$, given by
Lemma~\ref{lem:homogeneous-closed-loop}, and let $X$ be the
state of the first system under the same control $\widetilde \nu$. By \eqref{eq:exponential_stability} and the value representation
\eqref{eq:homogeneous-value-representation},
\[
\mathbb E_t|\widetilde X_{t+L}|^2
\leq Ce^{-\beta L}|x|^2,
\qquad
\mathbb E_t\int_t^\infty
\left(|\widetilde X_s|^2+|\widetilde \nu_s|^2\right)ds
\leq C|x|^2.
\]
Set $\Delta X:=X-\widetilde X$, and similarly for the
coefficients. Then $\Delta X_t=0$ and
\[
\begin{aligned}
d\Delta X_s
={}&
\left(A_s\Delta X_s+\Delta A_s\widetilde X_s
+\Delta B_s\widetilde \nu_s\right)ds
+
\sum_{i=1}^d
\left(C_s^i\Delta X_s+\Delta C_s^i\widetilde X_s
+\Delta D_s^i\widetilde \nu_s\right)dW_s^i.
\end{aligned}
\]
By the BDG and Gronwall inequalities, we see that
\begin{equation}\label{eq:stability-state-difference}
\mathbb E_t\sup_{t\leq s\leq t+L}|X_s-\widetilde X_s|^2
\leq
C_L\varepsilon^2
\mathbb E_t\int_t^{t+L}
\left(|\widetilde X_s|^2+|\widetilde \nu_s|^2\right)ds
\leq C_L\varepsilon^2|x|^2.
\end{equation}
Since $(P,Q)|_{[t,t+L]}$ and
$(\widetilde P,\widetilde Q)|_{[t,t+L]}$ solve the corresponding
finite-horizon backward stochastic Riccati equations with terminal matrices $P_{t+L}$ and
$\widetilde P_{t+L}$, respectively,
Proposition~\ref{prop:bsde_finite_horizon}, applied to the homogeneous
case $f=g=0$, gives
\[
\begin{aligned}
P_tx\cdot x
&\leq
\mathbb E_t\int_t^{t+L}
\left(S_sX_s\cdot X_s+|\widetilde\nu_s|^2\right)ds
+\mathbb E_t\left[P_{t+L}X_{t+L}\cdot X_{t+L}\right],\\
\widetilde P_tx\cdot x
&=
\mathbb E_t\int_t^{t+L}
\left(\widetilde S_s\widetilde X_s\cdot\widetilde X_s
+|\widetilde\nu_s|^2\right)ds
+\mathbb E_t
\left[\widetilde P_{t+L}\widetilde X_{t+L}\cdot
\widetilde X_{t+L}\right].
\end{aligned}
\]
Expanding the difference of the quadratic terms and using
\eqref{eq:stability-state-difference} and the
Cauchy--Schwarz inequality, we obtain
\[
\begin{aligned}
\left|
\mathbb E_t\int_t^{t+L}
\left(S_sX_s\cdot X_s
-\widetilde S_s\widetilde X_s\cdot\widetilde X_s\right)ds
\right|
&\leq C_L\varepsilon|x|^2,\\[1em]
\mathbb E_t\left[
P_{t+L}X_{t+L}\cdot X_{t+L}
-\widetilde P_{t+L}\widetilde X_{t+L}\cdot\widetilde X_{t+L}
\right]
&\leq
\left(
Ce^{-\beta L}\|P_{t+L}-\widetilde P_{t+L}\|_\infty
+C_L\varepsilon
\right)|x|^2.
\end{aligned}
\]
Consequently,
$
(P_t-\widetilde P_t)x\cdot x
\leq
\left(
Ce^{-\beta L}\|P_{t+L}-\widetilde P_{t+L}\|_\infty
+C_L\varepsilon
\right)|x|^2.
$ Interchanging the roles of $P$ and $\widetilde P$ and using
the arbitrariness of $x\in\mathbb R^n$, we obtain
\[
\|P_t-\widetilde P_t\|_\infty
\leq
Ce^{-\beta L}\|P_{t+L}-\widetilde P_{t+L}\|_\infty
+C_L\varepsilon.
\]
Choose $L$ such that $q:=Ce^{-\beta L}<1$. Taking the supremum over
$t\geq0$ yields
$
\|P-\widetilde P\|_{\mathbb S^\infty(\mathcal S^n)}
\leq
q\|P-\widetilde P\|_{\mathbb S^\infty(\mathcal S^n)}
+C_L\varepsilon,
$
and hence
$
\|P-\widetilde P\|_{\mathbb S^\infty(\mathcal S^n)}
\leq C\varepsilon.
$

\emph{Step 2: stability of $Q$.}
Set $\Delta P:=P-\widetilde P$ and
$\Delta Q:=Q-\widetilde Q$. Using
$N_s,\widetilde N_s\geq I_k$, the uniform boundedness of
$P$ and $\widetilde P$, and the definitions of $N$, $L$, and the
Riccati generator,
\[
|\Delta\mathcal R_s|
\leq
C\left[
(\varepsilon+|\Delta P_s|)
(1+|Q_s|^2+|\widetilde Q_s|^2)
+
(1+|Q_s|+|\widetilde Q_s|)|\Delta Q_s|
\right].
\]
Applying It\^o's formula to $|\Delta P|^2$ on $[t,t+1]$,
after localization, gives the following estimates, where we use
$\|\Delta P\|_{\mathbb S^\infty}\leq C\varepsilon$
and Young's inequality:
\begin{align*}
\mathbb E_t\int_t^{t+1}|\Delta Q_s|^2\,ds
&\leq \mathbb E_t|\Delta P_{t+1}|^2
+2\mathbb E_t\int_t^{t+1}
|\Delta P_s||\Delta\mathcal R_s|\,ds\\
&\leq C\varepsilon^2
+\frac12\mathbb E_t\int_t^{t+1}|\Delta Q_s|^2\,ds
+C\varepsilon^2\mathbb E_t\int_t^{t+1}
(1+|Q_s|^2+|\widetilde Q_s|^2)\,ds.
\end{align*}
As shown in the proof of
Proposition~\ref{prop:infinite-horizon-riccati}\textup{(i)},
the $\mathbb L^2_{\mathrm{unif}}$ norms of $Q$ and $\widetilde Q$
are bounded by a constant depending only on
$\|\Theta\|_{\mathbb L^\infty}$,
$\|\widetilde\Theta\|_{\mathbb L^\infty}$, and $M$.
Using these bounds and absorbing the $\Delta Q$ term, we obtain
$\|\mathbb E_t\int_t^{t+1}|\Delta Q_s|^2\,ds\|_\infty
\leq C\varepsilon^2$.
Therefore,
$\|Q-\widetilde Q\|_{\mathbb L^2_{\mathrm{unif}}((\mathcal S^n)^d)}
\leq C\varepsilon$.

\emph{Step 3: stability of $r$.} Set $\Delta\Gamma_t:=\Gamma_t-\widetilde\Gamma_t$,
and define $\Delta H$, $\Delta M$, and $\Delta\varphi$ similarly.
Set $\Delta r:=r-\widetilde r,\Delta k:=k-\widetilde k$ and $\zeta_s:=(\Delta H_s)^\top\widetilde r_s
+\sum_{i=1}^d\!(\Delta M_s^i)^\top\widetilde k_s^i
+\Delta\varphi_s$, we have
\[
d\Delta r_s
=-\bigl(
H_s^\top\Delta r_s
+\sum_{i=1}^d\!(M_s^i)^\top\Delta k_s^i
+\zeta_s
\bigr)ds
+\sum_{i=1}^d\!\Delta k_s^i\,dW_s^i.
\]
The estimates in Steps~1 and~2 give
\[
\|\Delta\Gamma\|_{\mathbb L^2_{\mathrm{unif}}}
+\|\Delta H\|_{\mathbb L^2_{\mathrm{unif}}}
+\|\Delta M\|_{\mathbb L^2_{\mathrm{unif}}}
+\|\Delta\varphi\|_{\mathbb L^2_{\mathrm{unif}}}
\leq C\varepsilon.
\]
Fix $t\geq0$ and $x\in\mathbb R^n$, and let
$\overline X^{t,x,*}$ be the optimal homogeneous closed-loop state
given by Lemma~\ref{lem:homogeneous-closed-loop}.
Applying It\^o's formula to
$\Delta r_s\cdot\overline X_s^{t,x,*}$ gives, for every $T>t$,
\[
\Delta r_t\cdot x
=
\mathbb E_t\left[\Delta r_T\cdot\overline X_T^{t,x,*}\right]
+
\mathbb E_t\int_t^T
\zeta_s\cdot\overline X_s^{t,x,*}\,ds.
\]
For every $Y\in\mathbb L^2_{\mathrm{unif}}$, conditional Fubini's theorem
and the tower property give
\[
\mathbb E_t\left(\int_t^{t+1}|Y_s|^2ds\right)^2
=
2\mathbb E_t\int_t^{t+1}|Y_s|^2\mathbb E_s\int_s^{t+1}|Y_r|^2dr\,ds
\leq
2\|Y\|_{\mathbb L^2_{\mathrm{unif}}}^4.
\]
Let $I_n:=[t+n,t+n+1]$. By
\eqref{eq:exponential_stability} and
\eqref{eq:homogeneous-value-representation},
\[
\mathbb E_t\int_{I_n}
\left(
|\overline X_s^{t,x,*}|^2+|\overline\nu_s^{t,x,*}|^2
\right)ds
\leq
C\mathbb E_t
\left[
P_{t+n}\overline X_{t+n}^{t,x,*}\cdot
\overline X_{t+n}^{t,x,*}
\right]
\leq
Ce^{-\beta n}|x|^2.
\]
Since the coefficients of the original state equation are bounded, the
conditional BDG inequality gives
$
\mathbb E_t\sup_{s\in I_n}
|\overline X_s^{t,x,*}|^2
\leq Ce^{-\beta n}|x|^2.
$

The contributions of $(\Delta H_s)^\top\widetilde r_s$
and $\Delta\varphi_s$ in $\zeta_s$ are estimated by
Cauchy--Schwarz, using the boundedness of
$\widetilde r$. For $(\Delta M_s^i)^\top\widetilde k_s^i$,  H\"older's
inequality and the preceding Fubini estimate give
\[
\begin{aligned}
&\mathbb E_t\int_{I_n}|\Delta M_s|\,|\widetilde k_s|\,|\overline X_s^{t,x,*}|\,ds
\\\leq&
\Big(\mathbb E_t\sup_{s\in I_n}|\overline X_s^{t,x,*}|^2\Big)^{\frac12}
\Big[\mathbb E_t\left(\int_{I_n}\!\!|\Delta M_s|^2\,ds\right)^2\Big]^{\frac{1}{4}}
\Big[\mathbb E_t\left(\int_{I_n}\!\!|\widetilde k_s|^2\,ds\right)^2\Big]^{\frac14}
\leq C\varepsilon e^{-\beta n/2}|x|.
\end{aligned}
\]
Consequently,
$
\mathbb E_t\int_{I_n}
\left|\zeta_s\cdot\overline X_s^{t,x,*}\right|ds
\leq C\varepsilon e^{-\beta n/2}|x|.
$
Summing over $n\geq0$ shows that the integral is absolutely convergent.
The terminal term tends to zero by the boundedness of $\Delta r$ and
\eqref{eq:exponential_stability}. Letting $T\to\infty$ therefore gives
$|\Delta r_t\cdot x|\leq C\varepsilon|x|$,
and hence
$\|\Delta r\|_{\mathbb S^\infty(\mathbb R^n)}\leq C\varepsilon$.

\emph{Step 4: stability of $k$.}
By Cauchy--Schwarz and the preceding estimate, we have
$\|\zeta\|_{\mathbb L^1_{\mathrm{unif}}(\mathbb R^n)}
\leq C\varepsilon$.
Applying It\^o's formula to $|\Delta r|^2$ on $[t,t+1]$ and using
Young's inequality, \eqref{eq:infinite-riccati-Q-uniform-bound}, and \eqref{eq:dual-coefficient-bound} 
yields
\[
\mathbb E_t\int_t^{t+1}|\Delta k_s|^2ds
\leq
C\|\Delta r\|_{\mathbb S^\infty}^2
+
C\|\Delta r\|_{\mathbb S^\infty}
\mathbb E_t\int_t^{t+1}|\zeta_s|ds.
\]
Using the estimate for $\Delta r$ gives
$
\|k-\widetilde k\|_{\mathbb L^2_{\mathrm{unif}}((\mathbb R^n)^d)}
\leq C\varepsilon.
$

\emph{Step 5: stability of $\lambda^0$.}
The preceding estimates and definitions of
$\eta,\widetilde\eta,N,\widetilde N$ give
\[
\|\eta-\widetilde\eta\|_{\mathbb L^2_{\mathrm{unif}}}
\leq C\varepsilon,
\qquad
\|\eta\|_{\mathbb L^2_{\mathrm{unif}}}
+\|\widetilde\eta\|_{\mathbb L^2_{\mathrm{unif}}}
\leq C,
\qquad
\|N^{-1}-\widetilde N^{-1}\|_{\mathbb L^\infty}
\leq C\varepsilon.
\]
Since $N,\widetilde N\geq I_k$,
\[
\left|
N_s^{-1}\eta_s\cdot\eta_s
-\widetilde N_s^{-1}\widetilde\eta_s\cdot\widetilde\eta_s
\right|
\leq
|\eta_s-\widetilde\eta_s|
\left(|\eta_s|+|\widetilde\eta_s|\right)
+|N_s^{-1}-\widetilde N_s^{-1}|\,|\widetilde\eta_s|^2.
\]
Using \eqref{eq:quadratic-ergodic-value-field},
Cauchy--Schwarz, and step 3, we obtain
$\|\lambda^0-\widetilde\lambda^0\|_{\mathbb L^1_{\mathrm{unif}}}
\leq C\varepsilon$.

To prove \textup{(ii)}, set
$d(t):=\operatorname*{ess\,sup}_{s\geq t}
\|\Theta_s-\widetilde\Theta_s\|_\infty$.
The proof of \textup{(i)} applies on $[t,\infty)$
with $\varepsilon$ replaced by $d(t)$, and therefore
$
\big\|
\mathbb E_t\int_t^{t+1}
|\lambda_s^0-\widetilde\lambda_s^0|\,ds
\big\|_\infty
\leq Cd(t)$ for $t\geq0.
$
For every $K\in\mathcal K$, the monotonicity of $K_\rho$ and $d$ gives

\begin{align}\label{eq:new}
|\Lambda^{K_\rho}-\widetilde\Lambda^{K_\rho}|
&\leq
\sum_{j=0}^\infty K_\rho(j)
\mathbb E\int_j^{j+1}
|\lambda_s^0-\widetilde\lambda_s^0|\,ds\nonumber\\
&\leq C\sum_{j=0}^\infty K_\rho(j)d(j)
\leq C\rho K(0)d(0)
+C\int_0^\infty \rho K(\rho t)d(t)\,dt.
\end{align}
Since $d$ is bounded, nonincreasing and
$\int_0^\infty K(s)\,ds=1$, dominated convergence yields
\[
\int_0^\infty \rho K(\rho t)d(t)\,dt
=
\int_0^\infty K(s)d(s/\rho)\,ds
\xrightarrow[\rho\downarrow0]{}
\lim_{t\to\infty}d(t),
\]
and we obtain \textup{(ii)} by sending  $\rho\downarrow0$ in~\eqref{eq:new}.
\end{proof}

\appendix

\section{Averaging kernels}
\label{app:averaging-kernels}
We establish here the analytic properties of the averaging
kernels used in the verification argument. We first derive their
mixture representation in terms of normalized finite-horizon averages
and then prove the Tauberian result relating the Ces\`aro, exponential,
and general kernel averages.

\begin{lemma}\label{lemma:upper-sandwich}
Let $K\!\in\!\K$, and $f\!\in\! L^1_{\mathrm{loc}}(\R_+)$ with
$\sup_{t\geq0}\int_t^{t+1}\!\!f^-(s)\,ds<\infty$. Then
\[
\limsup_{\rho\downarrow0}\int_0^\infty K_{\rho}(t)f(t)\,dt
\leq
\limsup_{T\to\infty}\frac{1}{T}\int_0^T f(t)\,dt.
\]
\end{lemma}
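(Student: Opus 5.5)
The plan is to write the kernel average as a mixture of Ces\`aro averages and then use the fact that a limsup passes through a probability mixture. Set $F(T):=\int_0^T f(t)\,dt$ and $\ell:=\limsup_{T\to\infty}F(T)/T$. We may assume $\ell<\infty$. The case $\ell=-\infty$ is handled by replacing $\ell$ with an arbitrary finite constant $c$, so we take $\ell$ finite.

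The first step is the mixture representation. Since $K_\rho$ is nonincreasing, càdlàg, and $tK_\rho(t)\to0$, integrating by parts gives, for each finite $T$,
\[
\int_0^T K_\rho(t)f(t)\,dt=K_\rho(T)F(T)-\int_{(0,T]}F(s)\,dK_\rho(s).
\]
Here $-dK_\rho$ is a nonnegative measure. Writing $F(s)=s\cdot F(s)/s$ shows that $\mu_\rho(ds):=-s\,dK_\rho(s)$ is a probability measure on $(0,\infty)$. Indeed, its total mass is $\int_0^\infty K_\rho=1$, by Fubini, since $K_\rho(t)=-\int_{(t,\infty)}dK_\rho$ because $K_\rho(\infty)=0$. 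Formally, then,
\[
\int_0^\infty K_\rho f=\int_{(0,\infty)}\frac{F(s)}{s}\,\mu_\rho(ds).
\]
Moreover $\mu_\rho$ is the image of $\mu_1$ under $s\mapsto s/\rho$, so its mass escapes to infinity as $\rho\downarrow0$.

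The main obstacle is making this rigorous when $f$ is signed and possibly not integrable against $K_\rho$. I would split $f=f^+-f^-$. The uniform local bound $\sup_t\int_t^{t+1}f^-\le c$ gives $F^-(T):=\int_0^Tf^-\le c(T+1)$. Consequently $K_\rho(T)F^-(T)\to0$, and $\int_0^\infty K_\rho f^-<\infty$, because $\sum_j K_\rho(j)\,c<\infty$ by monotonicity and integrability of $K_\rho$. So the integral $\int_0^\infty K_\rho f$ is well defined in $(-\infty,\infty]$.

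The upper bound is then obtained by truncation. Fix $\epsilon>0$ and choose $T_0$ with $F(s)\le(\ell+\epsilon)s$ for $s\ge T_0$. Since $f^+\ge0$ and $-dK_\rho\ge0$, monotone convergence can be applied to the $f^+$ part and dominated convergence to the $f^-$ part. The boundary term satisfies $\limsup_{T\to\infty}K_\rho(T)F(T)\le\limsup_{T\to\infty} K_\rho(T)(\ell+\epsilon)T=0$; if $\int_0^\infty K_\rho f^+=\infty$, the same bound shows this cannot happen along the representation, so finiteness follows. We then obtain
\[
\int_0^\infty K_\rho f\le\int_{(0,T_0)}\frac{|F(s)|}{s}\,\mu_\rho(ds)+(\ell+\epsilon)\mu_\rho([T_0,\infty)).
\]

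It remains to handle the first term. For $s\le T_0$ we have $|F(s)|/s\le \sup_{s\le T_0}|F(s)|/s$. This quantity is finite near $0$ only if $f$ is locally bounded, so I would instead bound that term directly as $\int_{(0,T_0)}|F(s)|\,(-dK_\rho)(s)\le \|F\|_{L^\infty(0,T_0)}K_\rho(0)=\rho K(0)\,\|F\|_{L^\infty(0,T_0)}$, which tends to $0$. Finally, $\mu_\rho([T_0,\infty))\to1$ as $\rho\downarrow0$, and this also gives the correct sign whether $\ell+\epsilon$ is positive or negative. Letting $\rho\downarrow0$ and then $\epsilon\downarrow0$ gives the claim.
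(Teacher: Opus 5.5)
Your proposal is correct and follows essentially the same route as the paper: you integrate by parts to write $\int_0^\infty K_\rho f$ as a mixture of Ces\`aro averages $F(s)/s$ against the probability measure $s\,(-dK_\rho)(ds)$, where the paper keeps $\nu$ fixed and writes $\int A(s/\rho)\,\nu(ds)$ while you push $\nu$ forward by $s\mapsto s/\rho$; you then split at $T_\varepsilon$, bound the initial piece by $\rho K(0)\sup_{[0,T_\varepsilon]}|F|$, and bound the tail by $(\ell+\varepsilon)$ times a mass tending to one. Your explicit $f^\pm$ argument for the well-definedness and finiteness of $\int_0^\infty K_\rho f$ fills in a step the paper passes over quickly, and the case $\ell=-\infty$ you set aside cannot actually occur, since $F(T)\geq -c(T+1)$.
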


\begin{proof}
Set $F(T):=\int_0^T f(t)\,dt$, $A(T):=F(T)/T$, and
$L:=\limsup_{T\to\infty}A(T)$. The assumption on $f^-$ implies
$F(T)\geq-C(1+T)$, so $L>-\infty$. If $L=+\infty$, there is nothing to
prove. Otherwise, for $\varepsilon>0$, there exists $T_\varepsilon>0$
such that $A(T)\leq L+\varepsilon$ for all $T\geq T_\varepsilon$.

Since $K\in\K$, the measure $\mu:=-dK$ is a positive Stieltjes measure on
$\R_+$ and
$\int_0^\infty s\,\mu(ds)=\int_0^\infty K(t)\,dt=1$. Thus
$\nu(ds):=s\,\mu(ds)$ is a probability measure. Moreover,
$L<\infty$ and the lower bound on $F$ imply $|F(T)|\leq C(1+T)$ for
large $T$, while the monotonicity and integrability of $K$ imply
$TK(T)\to0$. Hence the boundary term at infinity vanishes, and by
Lebesgue--Stieltjes integration by parts,
\[
\int_0^\infty K_{\rho}(t)f(t)\,dt
=
\int_0^\infty A\!\left(\frac{s}{\rho}\right)\nu(ds).
\]
Splitting the last integral at $\rho T_\varepsilon$, we obtain
\[
\int_0^\infty K_{\rho}(t)f(t)\,dt
\leq
\rho\sup_{0\leq T\leq T_\varepsilon}F(T)^+
\mu\bigl((0,\rho T_\varepsilon]\bigr)
+
(L+\varepsilon)\nu\bigl((\rho T_\varepsilon,\infty)\bigr).
\]
The first term is bounded by
$\rho K(0)\sup_{0\leq T\leq T_\varepsilon}F(T)^+$ and tends to zero.
Since $\nu(\{0\})=0$,
$\nu\bigl((\rho T_\varepsilon,\infty)\bigr)\to1$ as $\rho\downarrow0$.
Therefore,
$
\limsup_{\rho\downarrow0}\int_0^\infty K_{\rho}(t)f(t)\,dt
\leq L+\varepsilon.
$
Letting $\varepsilon\downarrow0$ yields the result.
\end{proof}

\begin{proof}[Proof of Proposition~\ref{prop:relation-value}]
Set $f(t):=\mathbb E[\lambda_t]$, then $
\sup_{t\geq0}\int_t^{t+1}|f(s)|\,ds<\infty,$ since
$\lambda\in\mathbb L^1_{\mathrm{unif}}.$
Applying Lemma~\ref{lemma:upper-sandwich} to $\pm f$ yields
$
\lambda_*(K^0)
\leq
\lambda_*(K)
\leq
\lambda^*(K)
\leq
\lambda^*(K^0),
$ for $K\in\K$.
Hence equality for $K^0$ implies equality for every $K\in\K$. Let $\widehat K\in\mathcal K_W$ and assume
$\lambda_*(\widehat K)=\lambda^*(\widehat K)$.
Replacing $\lambda$ by $\lambda\mathbf1_{(1,\infty)}$
does not change $\lambda_*(K)$ or $\lambda^*(K)$
for any $K\in\mathcal K$.
Thus, we may assume that $\lambda=0$ on $[0,1]$,
and hence $f=0$ on $[0,1]$.
Set $F(T):=\int_0^T f(t)\,dt$ and $s(T):=F(T)/T$.
The uniform local integrability of $f$ then implies that
$s$ is bounded on $(0,\infty)$ and, for $T\geq2$ and $\rho$ close to $1$,
$
|s(T)-s(\rho T)|
\leq C\left(|1-\rho|+\frac1T\right).
$
Thus $s$ satisfies the slow oscillation condition in
Korevaar~\cite[Chapter~II, equation~(2.8)]{Korevaar2004}. Set $k(t):=-t\widehat K'(t)$. Since $\widehat K\in\mathcal K_W$,
$k\geq0$, while
$
0\leq t\widehat K(t)\leq2\int_{t/2}^t\widehat K(r)\,dr\to0
$
and integration by parts gives
$
\int_0^\infty k(t)\,dt=1.
$
Thus, by Theorems~3.4 and~8.1
in~\cite[Chapter~II]{Korevaar2004}
and the preceding inequalities,
$\lambda_*(K)=\lambda^*(K)$ for every $K\in\mathcal K$.
\end{proof}
\section{Approximate feedback selection}
\label{app:exponential-map}
We construct here a progressively measurable feedback map
that is locally bounded and locally Lipschitz
and satisfies the conditional integrated estimate
\eqref{eq:approximate-feedback-selection}.
These regularity properties ensure the existence of a unique maximal
strong solution to the corresponding closed-loop equation.
Step~1 provides smooth local approximate minimizers.
Step~2 uses convexity and a partition of unity to construct a global
approximate minimizer, while the truncation ensures
finite, $\mathcal F_t$-measurable local bounds and Lipschitz constants.

\begin{lemma}
\label{lem:blockwise-integrated-lipschitz-selector}
Let $G_t(x,u):=u^\top M_t(x)u+2b_t(x)\cdot u+c_t(x)$,
where $M,b,c$ are random fields on
$\Omega\times\mathbb R_+\times\mathbb R^n$ taking values in
$\mathcal S^k$, $\mathbb R^k$, and $\mathbb R$, respectively,
progressively measurable for every $x$,
and continuous in $x$, $d\mathbb P\otimes dt$-a.e.
Suppose that $d\mathbb P\otimes dt$-a.e.
\[
\inf_{u\in\mathbb R^k}G_t(x,u)=0,
\text{for all }x\in\mathbb R^n,
\text{and }
\mathbb E\int_0^T\sup_{|x|\leq R}c_t(x)\,dt<\infty,
\text{for all }T,R>0.
\]
Then, for every $t\geq0$ and $\varepsilon>0$,
there exists a map
$u^{\varepsilon,t}:
\Omega\times\mathbb R_+\times\mathbb R^n\to\mathbb R^k$,
progressively measurable in $(\omega,s)$ for every $x\in\mathbb R^n$,
such that, for every $T,R>0$, there exist finite
$\mathcal F_t$-measurable random variables
$C_{T,R}^t,L_{T,R}^t$ satisfying
\[
|u_s^{\varepsilon,t}(x)|\leq C_{T,R}^t,
|u_s^{\varepsilon,t}(x)-u_s^{\varepsilon,t}(y)|
\leq L_{T,R}^t|x-y|,
\text{for }t\leq s\leq T+t,\ |x|,|y|\leq R, \mathbb P\text{-a.s.}
\]
Moreover, for all $Y$ progressively measurable,
$
\mathbb E_t\int_t^\infty
G_s\bigl(Y_s,u_s^{\varepsilon,t}(Y_s)\bigr)\,ds
\leq\varepsilon,~\mathbb P\text{-a.s.}
$
\end{lemma}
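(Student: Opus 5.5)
The construction proceeds in two stages, following the outline at the start of this appendix: first build smooth local approximate minimizers on small spatial cells, then glue them together with a partition of unity, exploiting convexity of $G_s(x,\cdot)$. Fix $t\geq0$ and $\varepsilon>0$. Because $\inf_uG_s(x,u)=0$ is finite for every $x$, the matrix $M_s(x)$ is positive semidefinite and $b_s(x)$ lies in the range of $M_s(x)$. Hence $G_s(x,\cdot)$ is a convex quadratic whose minimum $0$ is attained, for instance at $u=-M_s(x)^{+}b_s(x)$. This pseudo-inverse minimizer, however, may be neither continuous nor bounded in $x$. The plan is to replace it by a regularized minimizer with a controlled error, spread over the infinite horizon. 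Split $[t,\infty)$ into blocks $I_j:=[t+j,t+j+1)$ and allot the error budget $\varepsilon_j:=\varepsilon2^{-j-1}$ to block $j$. Split space into the annuli $\{j'\le|x|<j'+1\}$, so that a summable error weight is available per cell.

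\emph{Step 1 (local smooth approximate minimizers).} For fixed $(s,\omega)$ and a point $x_0$, choose $u_0$ with $G_s(x_0,u_0)\le\eta$. I would take the Tikhonov minimizer $u_0:=-(M_s(x_0)+\eta' I)^{-1}b_s(x_0)$, which is measurable in $(\omega,s)$. By continuity of $M,b,c$ in $x$, the estimate $G_s(x,u_0)\le2\eta$ then holds on a neighbourhood of $x_0$. Compactness of closed balls gives a finite cover of $\{|x|\le R\}$. To keep measurability and $\mathcal F_t$-measurable constants, I would not select neighbourhoods pathwise. Instead I would use a fixed dyadic grid of mesh $h$ and a countable family of candidate constants $u$ indexed by the grid, and choose, for each grid point, the first candidate with $G_s(x_i,u)\le\eta$; this is a measurable selection. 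The smallness of $G$ at nearby points then holds only on an event where the spatial modulus of continuity of $(M,b,c)$ on $\{|x|\le R\}$ is below a threshold. That modulus is a progressively measurable random variable, so choosing $h=h_s(\omega)$ adapted to it keeps everything measurable.

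\emph{Step 2 (gluing, convexity, truncation).} Let $\{\psi_i\}$ be a smooth partition of unity subordinate to the grid cells of mesh $h$, and set $u_s(x):=\sum_i\psi_i(x)u_i$. By convexity of $G_s(x,\cdot)$, we get $G_s(x,u_s(x))\le\sum_i\psi_i(x)G_s(x,u_i)\le2\eta$. The map $u_s$ is smooth in $x$, with local bound $\max_i|u_i|$ and Lipschitz constant of order $h^{-1}\max_i|u_i|$. These quantities are random and $s$-dependent, whereas the lemma requires $\mathcal F_t$-measurable bounds uniform over $s\in[t,t+T]$. To obtain them, I would truncate. On the set where $h_s$ and the $|u_i|$ stay within a deterministic level $m$, use the glued selector; elsewhere use a fallback $\bar u$, say $\bar u=0$, for which $G_s(x,0)=c_s(x)$. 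The resulting map is bounded by $m$ with Lipschitz constant at most $m^2$, both deterministic. The cost of the fallback is $\mathbb E_t\int\mathbf 1_{\{\text{bad}\}}\sup_{|x|\le R}c_s^+(x)\,ds$. Since $\sup_{|x|\le R}c_s(x)$ is integrable and the bad set shrinks to null as $m\to\infty$, this cost can be made small by conditional dominated convergence. For this to hold $\mathbb P$-a.s. rather than only in expectation, I would let $m=m^t_{j,j'}$ be $\mathcal F_t$-measurable, chosen as the smallest integer for which the conditional error on block $j$ and annulus $j'$ is below $\varepsilon_j2^{-j'-1}$. Such an integer exists and is finite a.s. by conditional monotone convergence. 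The constants $C^t_{T,R}$ and $L^t_{T,R}$ are then maxima of finitely many $m^t_{j,j'}$.

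\emph{Step 3 (integrated estimate for arbitrary $Y$).} This step is the main obstacle, because $Y$ is arbitrary, so the error bound must hold pointwise-uniformly in $x$, not just along one path. My plan is to ensure $G_s(x,u_s^{\varepsilon,t}(x))\le e_s(x)$ with a progressively measurable envelope that is independent of $x$ on each annulus and satisfies $\mathbb E_t\int_{I_j}\sup_{j'}e_s\,ds\le\varepsilon_j$. Then $\mathbb E_t\int_t^\infty G_s(Y_s,u_s(Y_s))\,ds\le\sum_j\varepsilon_j\le\varepsilon$. Two features of the construction make this possible: the fallback error is bounded by $\sup_{|x|\le R}c_s^+$, and the regular part has error at most $2\eta_s$, where $\eta_s$ can be chosen deterministic and summable. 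Making the annulus-wise sup finite requires the annulus budgets $2^{-j'-1}$ and uses that $Y_s$ lies in exactly one annulus. The delicate points are the interpolation across annulus boundaries, where I would use overlapping annuli and once more convexity, and keeping every choice measurable. I expect this to be the hardest part to write cleanly.
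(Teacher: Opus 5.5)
Your proposal is correct and shares the paper's architecture. Both glue local approximate minimizers with a partition of unity and use convexity of $G_s(x,\cdot)$, truncate with the fallback $u=0$ (whose cost is $G_s(x,0)=c_s(x)\geq0$), allot error budgets of order $\varepsilon2^{-n-m}$ per time block and spatial cell, choose the truncation levels $\mathcal F_t$-measurably by conditional dominated convergence, and control arbitrary $Y$ through an envelope that does not depend on $x$.

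The genuine difference is in Step~1. The paper fixes a countable family $(v_j)\subset C_c^\infty(\mathbb R^n;\mathbb R^k)$ that is dense for locally uniform convergence. It then shows that on each fixed compact cell $A_m$ some $v_j$ is a uniform $\delta_n$-minimizer, using the Tikhonov minimizer $w_\rho=-(M+\rho I_k)^{-1}b$ and Dini's theorem applied to $\min_u\{G+\rho|u|^2\}\downarrow0$. The only random object is then an integer index $J_{n,m}(s)$, combined with one deterministic partition of unity $(\chi_m)$. Measurability is therefore automatic, and truncating the index at $K_{n,m}$ gives the local bounds and Lipschitz constants from finitely many fixed smooth functions.

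You instead build smooth local minimizers by hand from rational constants on a dyadic grid of random mesh $h_s$. This avoids Dini and the density argument. The price is a measurable choice of mesh and a second layer of gluing across overlapping annuli.

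Your route also needs a point you leave implicit. For $h_s>0$ to exist, the first-candidate constants must be bounded on $\overline B_R$ uniformly in the mesh. This holds because the open sets $\{x:G_s(x,q_p)<\eta\}$, $p\geq1$, where $(q_p)$ enumerates your candidate constants, cover $\overline B_R$. Finitely many of them therefore suffice, and the selected index never exceeds some finite $P$. You should say so explicitly.

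Finally, your Step~3 is not a real obstacle. Since $\chi_m(x)c_s(x)\leq\sup_{A_m}c_s=:H_m(s)$, one gets $G_s(x,u_s(x))\leq\delta_n+\sum_mH_m(s)\mathbf 1_{\{J_{n,m}(s)>K_{n,m}\}}$ uniformly in $x$, exactly as in the paper. Your annulus envelope is this same bound.
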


\begin{proof}
Fix $\varepsilon>0$. We prove the result for $t=0$.
For general $t\geq0$, shift the construction by $t$, replace
$\mathbb E$ with $\mathbb E_t$,
and set the feedback equal to zero before $t$. After modifying $G$ on a progressively measurable
$d\mathbb P\otimes ds$-null set, we may assume that
the pointwise assumptions hold everywhere.

\emph{Step 1:}
Choose a countable family
$(v_j)_{j\geq1}\subset C_c^\infty(\mathbb R^n;\mathbb R^k)$
that is dense in $C(\mathbb R^n;\mathbb R^k)$
for uniform convergence on compact sets.
In this step, we show that for every nonempty compact set
$A\subset\mathbb R^n$ and every $(\omega,s)$,
\[
\inf_{j\geq1}\sup_{x\in A}G_s(\omega,x,v_j(x))=0.
\]
Fix $(\omega,s)$ and omit this dependence from the notation for $M,b,c$.
The condition $\inf_{u\in\mathbb R^k}G_s(\omega,x,u)=0$
implies that $M(x)\geq0$ for every $x\in\mathbb R^n$.
For $\rho>0$, define
\[
h_\rho(x):
=G_s(\omega,x,w_\rho(x))+\rho|w_\rho(x)|^2,
\,\text{with }\;
w_\rho(x):=-(M(x)+\rho I_k)^{-1}b(x).
\]
Then $h_\rho$ is continuous and decreases pointwise to zero as
$\rho\downarrow0$. By Dini's theorem,
$
\sup_{x\in A}G_s(\omega,x,w_\rho(x))
\leq\sup_{x\in A}h_\rho(x)\longrightarrow0.
$
By the density of $(v_j)$ and joint continuity of
$G_s(\omega,\cdot,\cdot)$, approximating $w_\rho$ uniformly on $A$
proves the claim.

\emph{Step 2:}
Choose a nonnegative function $\psi\in C_c^\infty(\mathbb R^n)$
that is strictly positive on $[-1,1]^n$, enumerate
$\mathbb Z^n$ as $(z_m)_{m\geq1}$, and define
$
\chi_m(x):=
\frac{\psi(x-z_m)}{\sum_{\ell\geq1}\psi(x-z_\ell)},
A_m:=\operatorname{supp}\chi_m.
$ Then
$\chi_m\in C_c^\infty(\mathbb R^n)$,
$\chi_m\geq0$, and $\sum_{m\geq1}\chi_m(x)=1$ for every
$x\in\mathbb R^n$.
Moreover, every compact subset of $\mathbb R^n$ intersects only
finitely many $A_m$.

Set $I_n:=[n,n+1)$ and
$\delta_n:=\varepsilon2^{-n-2}$, $n\geq0$.
Using the family $(v_j)$ chosen in Step~1, for $s\in I_n$, define
\[
J_{n,m}(s)
:=\min\left\{j\geq1:
\sup_{x\in A_m}G_s(x,v_j(x))<\delta_n\right\},
\quad
H_m(s):=\sup_{x\in A_m}c_s(x).
\]
By Step~1, $J_{n,m}$ is finite.
Both $J_{n,m}$ and $H_m$ are progressively measurable.
Moreover, $\mathbb E\int_{I_n}H_m(s)\,ds<\infty$. By dominated convergence,
$\mathbb E\int_{I_n}H_m(s)
\mathbf1_{\{J_{n,m}(s)>q\}}\,ds\longrightarrow0$
as $q\to\infty$. Therefore,

\[
\mathbb E\int_{I_n}H_m(s)
\mathbf1_{\{J_{n,m}(s)>K_{n,m}\}}\,ds
\leq\varepsilon2^{-n-m-2},
\quad\text{for some deterministic }K_{n,m}\geq1.
\]

Define
\[
u_s^{\varepsilon,0}(x)
:=\sum_{m\geq1}\chi_m(x)
v_{J_{n,m}(s)}(x)
\mathbf1_{\{J_{n,m}(s)\leq K_{n,m}\}},
\qquad s\in I_n.
\]
The sum is locally finite, so $u^{\varepsilon,0}$ is progressively
measurable for every $x$.
For every $T,R>0$, only finitely many indices $n,m$ contribute
on $[0,T]\times\overline B_R$, and the truncation restricts $j$
to a finite set for each pair $(n,m)$.
Since each $\chi_m v_j$ is smooth, the required local boundedness
and Lipschitz estimates hold with finite deterministic constants
$C_{T,R}^0$ and $L_{T,R}^0$. Finally, the convexity of $G$ in $u$ gives, for $s\in I_n$,
\[
G_s(x,u_s^{\varepsilon,0}(x))
\!\leq\!\delta_n+
\sum_{m\geq1}\chi_m(x)c_s(x)
\mathbf1_{\{J_{n,m}(s)>K_{n,m}\}}
\!\leq\!\delta_n+\!
\sum_{m\geq1}H_m(s)
\mathbf1_{\{J_{n,m}(s)>K_{n,m}\}}.
\]
Consequently, for every progressively measurable process $Y$,
\[
0\leq
\mathbb E\int_0^\infty
G_s(Y_s,u_s^{\varepsilon,0}(Y_s))\,ds
\leq
\sum_{n\geq0}\delta_n
+\sum_{n\geq0}\sum_{m\geq1}\varepsilon2^{-n-m-2}
=\varepsilon.
\]
\end{proof}

\vspace{-7mm}
\bibliographystyle{siamplain}
\bibliography{mybibliography}

\end{document}